\newtheorem{thm}{Theorem}[section]
\newtheorem{pro}[thm]{Proposition}
\newtheorem{lem}[thm]{Lemma}
\newtheorem{cor}[thm]{Corollary}
\newtheorem{conj}[thm]{Conjecture}
\theoremstyle{definition}
\newtheorem{rmk}[thm]{Remark}
\newtheorem{ex}[thm]{Example}
\newcommand\Hom{\operatorname{Hom}}
\def\Aut{\operatorname{Aut}}
\def\Inn{\operatorname{Inn}}
\def\Conj{\operatorname{Conj}}
\def\Core{\operatorname{Core}}
\title{Idempotents, free products and quandle coverings}
\author{Mohamed Elhamdadi} 
\address{Department of Mathematics, 
University of South Florida, Tampa, FL 33620, U.S.A.} 
\email{emohamed@math.usf.edu} 
\author{Brandon Nunez}
\address{Department of Mathematics, 
University of South Florida, Tampa, FL 33620, U.S.A.} 
\email{nunez7@usf.edu} 
\author{Mahender Singh}
\address{Department of Mathematical Sciences,
Indian Institute of Science Education and Research (IISER) Mohali, Sector 81, SAS Nagar, P O Manauli, Punjab 140306, India} 
\email{mahender@iisermohali.ac.in} 
\author{Dipali Swain}
\address{Department of Mathematics, 
University of South Florida, Tampa, FL 33620, U.S.A.} 
\email{dipaliswain@usf.edu} 
\subjclass[2020]{Primary 17D99; Secondary 57M27, 16S34, 20N02}
\keywords{Idempotent, quandle cocycle, quandle covering, quandle ring, zero-divisor}
\begin{document}

\maketitle

\begin{abstract}
In this paper, we investigate idempotents in quandle rings and relate them with quandle coverings. We prove that integral quandle rings of quandles of finite type that are non-trivial coverings over nice base quandles admit infinitely many non-trivial idempotents, and give their complete description. We show that the set of all these idempotents forms a quandle in itself. As an application, we deduce that the quandle ring of the knot quandle of a non-trivial long knot admit non-trivial idempotents. We consider free products of quandles and prove that integral quandle rings of free quandles have only trivial idempotents, giving an infinite family of quandles with this property. We also give a description of idempotents in quandle rings of unions and certain twisted unions of quandles. 
\end{abstract}

\section{Introduction}
Axiomatisation of the three Reidemeister moves of planar diagrams of knots and links in the 3-space led to algebraic structures known as quandles \cite{MR2628474, MR0672410}. Besides being fundamental to knot theory, these structures arise in a variety of contexts such as set-theoretic solutions to the Yang-Baxter equation \cite{MR2128041}, Yetter-Drinfeld Modules \cite{MR2153117}, Riemannian symmetric spaces \cite{MR0217742}, Hopf algebras \cite{MR1994219} and mapping class groups \cite{MR2699808, MR1967241, MR2377276}, to name a few. Although link quandles are strong invariants of links, it is challenging to determine whether two quandles are isomorphic. This has motivated search for newer invariants of quandles themselves.
\par
In an attempt to bring ring theoretic techniques to the study of quandles, a theory of quandle rings analogous to the classical theory of group rings has been proposed in \cite{MR3977818}, where several interconnections between quandles and their associated quandle rings have been explored. Functoriality of the construction implies that morphisms of quandle rings give a natural enhancement of the well-known quandle coloring invariant of knots and links. Quandle rings of non-trivial quandles are non-associative, and it has been proved in \cite{MR3915329} that these rings are not even power-associative, which is the other end of the spectrum of associativity. Furthermore, quandle rings of non-trivial quandles over rings of characteristic more than three cannot be alternative or Jordan algebras  \cite{BPS1}. Examples of non-isomorphic finite quandles with isomorphic quandle rings have been given in \cite{MR3915329}. It has been proved that if two finite quandles admit doubly transitive actions by their inner automorphism groups and have isomorphic quandle rings, then the quandles have the same number of orbits of each order \cite{MR3915329}. A homological study of quandle rings has been initiated in a recent work \cite{EMSZ2022}, where a complete characterization of derivations of quandle algebras of dihedral quandles over fields of characteristic zero has been given, and dimension of the Lie algebra of derivations has been investigated. Zero-divisors in quandle rings have been investigated in \cite{BPS1} using the idea of orderability of quandles. It has been proved that quandle rings of left or right orderable quandles which are semi-latin have no zero-divisors. In particular, integral quandle rings of free quandles have no zero-divisors. Recent results suggest an analogue of Kaplansky's zero-divisor conjecture for quandles, which states that the quandle ring of a non-inert semi-latin quandle over an integral domain has no zero-divisors \cite{BPS1}. Units in group rings play a fundamental role in the structure theory of group rings. In contrast, it turns out that idempotents are the natural objects in quandle rings since each quandle element is, by definition, an idempotent of the quandle ring. In a recent work \cite{ENS2022}, idempotents of quandle rings have been used for constructing proper enhancements of the well-known quandle coloring invariant of knots and links in the 3-space. In this paper, we carry out a detailed investigation of idempotents in quandle rings with a focus on their precise computations. We note that idempotents of integral quandle rings of all three element quandles and the dihedral quandle of order four have been computed in \cite{BPS1}. We exploit the idea of quandle coverings as developed by  Eisermann in \cite{MR1954330, MR3205568}, where a Galois correspondence between connected coverings and subgroups of the fundamental group of the quandle  has been established. We determine the complete set of idempotents in the quandle ring of a quandle of finite type which is a non-trivial covering over a nice base quandle. We also investigate idempotents in quandle rings of free products including free quandles.
\par

Throughout the paper, we assume that our coefficient ring $\mathbf{k}$ is an integral domain with unity and has more than two elements.
\par

The paper is organised as follows. Section \ref{review} reviews basic ideas about quandle rings. In Section \ref{idempotents in quandle rings}, we begin by giving a basic sufficient condition on a quandle $X$ for its quandle ring $\mathbf{k}[X]$ to admit non-trivial idempotents (Proposition \ref{trivial subquandle idempotent}). We prove that if $G$ is an abelian group without 2 and 3-torsion, then $\mathbb{Z}[\Core(G)]$ has no non-trivial idempotent built up with at most three distinct basis elements (Proposition \ref{no three elements prop}). This along with a computer assisted check for quandles of order less than seven suggests that integral quandle rings of finite latin quandles have only trivial idempotents (Conjecture \ref{Main conjecture}). In Section \ref{coverings and idempotents}, we relate idempotents with quandle coverings.  We prove that if $L$ is a non-trivial long knot, then the quandle ring $\mathbf{k}[Q(L)]$ of its knot quandle $Q(L)$ has non-trivial idempotents (Proposition \ref{idempotents long knots}).  As one of the main results, we prove that if $p:X \to Y$ is a non-trivial quandle covering such that $X$ is of finite type and $\mathbf{k}[Y]$ has only trivial idempotents, then $\mathbf{k}[X]$ has many non-trivial idempotents, and give their precise description (Theorem \ref{quandle covering theorem}). As an application, we deduce that the set of all idempotents of the quandle ring of such a quandle form a quandle in itself with respect to the ring multiplication (Corollary \ref{idempotens form quandle}). We also prove that the quandle ring of a non-trivial quandle covering has right zero-divisors (Proposition \ref{zero-divisors for coverings}). In Section \ref{sec idempotents in free products}, we consider free products of quandles. We overcome the lack of associativity in quandles through an appropriate length function for elements in free products. As the second main result, we prove that integral quandle rings of free quandles have only trivial idempotents (Theorem \ref{idempotents in free products}). This gives, for the first time, an infinite family of quandles whose integral quandle rings have only trivial idempotents. As a consequence we deduce that the automorphism group of the integral quandle ring of a free quandle is isomorphic to the welded braid group on the same number of strands as the rank of the quandle (Corollary \ref{auto quandle ring free products}). In Section \ref{idempotents in unions}, we compute idempotents in unions (Proposition \ref{idempotent union}) and certain twisted unions of quandles (Proposition \ref{idempotens in trivial unions}). We conclude the paper with some remarks and open problems arising from this work.
\medskip

\section{Review of quandle rings}\label{review}
A {\it quandle} is a non-empty set $X$ with a binary operation $*$ such that each right multiplication $S_y: X \rightarrow X$ given by $S_y(x) =x*y$ for $x \in X$, is an automorphism of $X$ fixing the element $y$. A quandle is completely determined by the set of right multiplications by its elements. Note that right multiplications being homomorphisms of $X$ is equivalent to the right distributivity axiom $$(x*y)*z=(x*z)*(y*z)$$ for all $x, y, z \in X$. It turns out that quandle axioms are simply algebraic formulations of the three Reidemeister moves of planar diagrams of knots and links in the 3-space \cite{MR2628474, MR0672410}.  A quandle $X$ is called {\it trivial} if $x*y=x$ for all $x, y\in X$. The automorphism group of a quandle $X$ contains a normal subgroup  $\Inn(X)= \langle S_x \mid x \in X \rangle$ generated by all right multiplications, called the {\it inner automorphism group} of $X$. A quandle is {\it faithful} if the natural map $X \to \Inn(X)$ given by $x \mapsto S_x$ is injective.  An orbit of the natural $\Inn(X)$-action on $X$ is called a {\it connected component} of $X$. 
\par

Conjugacy classes in groups are a rich source of quandles. Each group $G$ can be turned into the {\it conjugation quandle} $\Conj(G)$ with the binary operation $x*y= yx y^{-1}$ of conjugation. Similarly, every group $G$ can be turned into a quandle $\Core(G)$ by setting $x*y=yx^{-1}y$, called the {\it core quandle} of $G$. For abelian groups (written additively), the quandle operation becomes $x*y=2y-x$. In particular, the cyclic group of order $n\ge 2$ gives the dihedral quandle $R_n=\{0, 1, 2, \ldots, n-1 \}$  of order $n$. 
\par
A quandle $X$ is called {\it latin} if each left multiplication $L_x: X \to X$ given by $L_x(y)=x*y$ for $y \in X$, is a bijection, and called {\it semi-latin} if each $L_x$ is an injection. For example, dihedral quandles of odd orders and commutative quandles are latin. Obviously, every latin quandle is semi-latin, but not conversely. For example $\Core(\mathbb{Z})$ is semi-latin but not latin. A quandle $X$ is called {\it medial} if $$(x*y)*(z*w)=(x*z)*(y*w)$$ for all $x, y, z, w \in X$. These are precisely the quandles for which the map $X \times X \to X$ given by $(x, y) \mapsto x*y$ is a quandle homomorphism, where $X \times X$ is equipped with the product quandle structure. We say that a quandle $X$ is of {\it finite type} if for each $x \in X$, the inner automorphism $S_x$ has finite order, say, $n_x$. Note that homomorphic image of a quandle of finite type is again a quandle of finite type. For example, all finite and involutory quandles are of finite type.
\par

It follows from the right distributivity axiom in a quandle $X$ that 
\begin{equation}\label{left association identity}
x*^\epsilon\left(y*^{\mu} z\right)=\left(\left(x*^{-\mu} z\right)*^{\epsilon} y\right)*^ {\mu} z
\end{equation}
for all $x,y,z \in X$ and $\epsilon, \mu \in\{-1,1\}$. For ease of notation, we write a left-associated product
\begin{equation*}
\left(\left(\cdots\left(\left(x_0*^{\epsilon_1}x_1\right)*^{\epsilon_2}x_2\right)*^{\epsilon_3}\cdots\right)*^{\epsilon_{n-1}}x_{n-1}\right)*^{\epsilon_n}x_n
\end{equation*}
simply as 
\begin{equation*}
x_0*^{\epsilon_1}x_1*^{\epsilon_2}\cdots*^{\epsilon_n}x_n.
\end{equation*}
A repeated use of \eqref{left association identity} gives the following result \cite[Lemma 4.4.8]{Winker1984}.

\begin{lem}\label{Lem:the canonical left associated form}
Let $X$ be a quandle. Then the product 
\begin{equation*}
(x_0*^{\epsilon_1}x_1*^{\epsilon_2}\cdots*^{\epsilon_m}x_m)*^{\mu_0}\left(y_0*^{\mu_1}y_1*^{\mu_2}\cdots*^{\mu_n}y_n\right)
\end{equation*}
of two left-associated expressions in $X$ is the left-associated expression
\begin{equation*}
x_0*^{\epsilon_1}x_1*^{\epsilon_2}\cdots*^{\epsilon_m}x_m*^{-\mu_n}y_n*^{-\mu_{n-1}}y_{n-1}*^{-\mu_{n-2}}\cdots*^{-\mu_1}y_1*^{\mu_0}y_0*^{\mu_1}y_1*^{\mu_2}\cdots*^{\mu_n}y_n.
\end{equation*}
\end{lem}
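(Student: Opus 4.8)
The plan is to prove the statement by induction on $n$, the number of factors appended to $y_0$ in the right-hand factor, using the single identity \eqref{left association identity} as the engine that strips off one factor at a time. I would write $A = x_0*^{\epsilon_1}x_1*^{\epsilon_2}\cdots*^{\epsilon_m}x_m$ and $B = y_0*^{\mu_1}y_1*^{\mu_2}\cdots*^{\mu_n}y_n$ for the two left-associated expressions, so that the goal is to rewrite $A*^{\mu_0}B$ in left-associated form. Since each left-associated expression is a single element of $X$, one may freely substitute such compound expressions into \eqref{left association identity}. It is important that the statement be quantified over an \emph{arbitrary} left factor $A$, since the inductive step will apply the hypothesis to a left factor longer than the original one.

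For the base case $n=0$ we have $B=y_0$, and $A*^{\mu_0}y_0$ is already left-associated; the claimed right-hand side collapses to exactly $x_0*^{\epsilon_1}\cdots*^{\epsilon_m}x_m*^{\mu_0}y_0$, so there is nothing to prove. For the inductive step, assume the formula holds whenever the right-hand factor has at most $n-1$ tail factors, and split $B = B'*^{\mu_n}y_n$ with $B' = y_0*^{\mu_1}y_1*\cdots*^{\mu_{n-1}}y_{n-1}$. Applying \eqref{left association identity} with $x=A$, $y=B'$, $z=y_n$, $\epsilon=\mu_0$ and $\mu=\mu_n$ gives
\[
A*^{\mu_0}B = A*^{\mu_0}\bigl(B'*^{\mu_n}y_n\bigr) = \bigl((A*^{-\mu_n}y_n)*^{\mu_0}B'\bigr)*^{\mu_n}y_n.
\]
Setting $\tilde A = A*^{-\mu_n}y_n = x_0*^{\epsilon_1}\cdots*^{\epsilon_m}x_m*^{-\mu_n}y_n$, which is again left-associated, the inner product $\tilde A*^{\mu_0}B'$ has right-hand factor $B'$ with only $n-1$ tail factors, so the inductive hypothesis rewrites it in left-associated form; appending the trailing $*^{\mu_n}y_n$ then produces precisely the asserted expression, with the two occurrences of $y_n$ carrying the signs $-\mu_n$ (just after the $x_m$ block) and $+\mu_n$ (at the very end), and the symmetric middle block $*^{-\mu_{n-1}}y_{n-1}*\cdots*^{-\mu_1}y_1*^{\mu_0}y_0*^{\mu_1}y_1*\cdots*^{\mu_{n-1}}y_{n-1}$ supplied by the hypothesis.

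The computation is essentially mechanical once the induction is set up, so the only real obstacle is bookkeeping: one must track carefully that \eqref{left association identity} contributes $y_n$ twice, with opposite signs and in the two extreme positions, while the inductive hypothesis fills in the symmetric middle block, and then verify that the resulting indices and signs match the claimed formula letter for letter. I would record the case $n=1$ separately as a sanity check, since there \eqref{left association identity} is applied exactly once and reproduces the formula directly, which confirms that the sign conventions are aligned before the general induction is carried out.
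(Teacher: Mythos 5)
Your induction on $n$, peeling off the last factor of the right-hand expression via \eqref{left association identity} and applying the hypothesis to the lengthened left factor $A*^{-\mu_n}y_n$, is correct and is precisely the ``repeated use of \eqref{left association identity}'' that the paper invokes (citing Winker's thesis) without spelling out the details. Your explicit remark that the inductive statement must be quantified over an arbitrary left factor is exactly the bookkeeping point needed to make that sketch rigorous.
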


The 	quandle axioms imply that each element of a quandle $X$ has a canonical left-associated expression $x_0*^{\epsilon_1}x_1*^{\epsilon_2}\cdots*^{\epsilon_n}x_n$, where $x_0\neq x_1$, and if $x_i=x_{i+1}$ for any $1 \le i \le n-1$, then $\epsilon_i= \epsilon_{i+1}$.
\medskip

Let $(X, *)$ be a quandle and $\mathbf{k}$ an integral domain with unity {\bf 1}.  Let $e_x$ be a unique symbol corresponding to each $x \in X$. Let $\mathbf{k}[X]$ be the set of all formal expressions of the form $\sum_{x \in X }  \alpha_x e_x$, where $\alpha_x \in \mathbf{k}$ such that all but finitely many $\alpha_x=0$.  The set $\mathbf{k}[X]$ has a free $\mathbf{k}$-module structure with basis $\{e_x \mid x \in X \}$ and admits a product given by 
 $$ \Big( \sum_{x \in X }  \alpha_x e_x \Big) \Big( \sum_{ y \in X }  \beta_y e_y \Big)
 =   \sum_{x, y \in X } \alpha_x \beta_y e_{x * y},$$
where $x, y \in X$ and $\alpha_x, \beta_y \in \mathbf{k}$. This turns $\mathbf{k}[X]$ into a ring (rather a $\mathbf{k}$-algebra) called the {\it quandle ring} of $X$ with coefficients in $\mathbf{k}$. Even if the coefficient ring $\mathbf{k}$ is associative, the quandle ring $\mathbf{k}[X]$ is non-associative when $X$ is a non-trivial quandle. The quandle $X$ can be identified as a subset of $\mathbf{k}[X]$ via the natural map $x \mapsto {\bf 1}e_x=e_x$. 
\par
The surjective ring homomorphism $\varepsilon: \mathbf{k}[X] \rightarrow \mathbf{k}$ given by $$\varepsilon \Big(\sum_{x \in X }  \alpha_x e_x \Big)=\sum_{x \in X }  \alpha_x$$
is called the {\it augmentation map}. The kernel of $\varepsilon $ is a two-sided ideal of $\mathbf{k}[X]$, called the {\it augmentation ideal} of $\mathbf{k}[X]$. Throughout the article, we make a distinction between the product in a quandle and the product in its associated quandle ring. 
\medskip

\section{Idempotents in quandle rings}\label{idempotents in quandle rings}
Let $X$ be a quandle and $\mathbf{k}$ an integral domain with unity.  A non-zero element $u \in \mathbf{k}[X]$ is called an {\it idempotent} if $u^2=u$. We denote the set of all idempotents of $\mathbf{k}[X]$ by $\mathcal{I}\big(\mathbf{k}[X]\big)$, that is,
$$
\mathcal{I}\big(\mathbf{k}[X] \big)=\big\{ u  \in \mathbf{k}[X]\; | \; u^2=u\big \}.
$$
It is clear that the basis elements $\{e_x \mid x \in X\}$ are idempotents of $\mathbf{k}[X]$, and we refer them as {\it trivial idempotents}.  A non-trivial idempotent  is an element of $\mathbf{k}[X]$ that is not of the form $e_x$ for any $x \in X$. Clearly, if $Y$ is a subquandle of $X$, then $\mathcal{I} \big(\mathbf{k}[Y] \big) \subseteq \mathcal{I} \big(\mathbf{k}[X] \big)$. The construction of a quandle ring is functorial. Thus, a quandle homomorphism $\phi: X \rightarrow X'$ induces a ring homomorphism $\hat{\phi}: \mathbf{k}[X] \rightarrow \mathbf{k}[X']$, and hence  $\hat{\phi}$ maps $ \mathcal{I} \big(\mathbf{k}[X] \big)$ into $\mathcal{I} \big(\mathbf{k}[X'] \big)$. Since the augmentation map $\varepsilon: \mathbf{k}[X] \rightarrow \mathbf{k}$ is a ring homomorphism, it follows that $\varepsilon(u)=0$ or $\varepsilon(u)=1$ for each idempotent $u$ of $\mathbf{k}[X]$.
\par
 A non-zero element $u \in \mathbf{k}[X]$ is called a {\it right zero-divisor} if there exists a non-zero element $v \in \mathbf{k}[X]$ such that $vu=0$. Left and two-sided zero-divisors are defined analogously. 
\par
It is a well-known result of Swan \cite[p.571]{MR138688} that if $G$ is a finite group, then the group ring $\mathbf{k}[G]$ has a non-trivial idempotent if and only if some prime divisor of $|G|$ is invertible in $\mathbf{k}$. Although, we do not have Lagrange's theorem for finite quandles, a partial one way analogue of this result does hold for finite quandles.
 
\begin{pro}\label{prop swan analogue}
Let $X$ be a finite quandle having a subquandle $Y$ with more than one element such that $|Y|$ is invertible in $\mathbf{k}$. Then $\mathbf{k}[X]$ has a non-trivial idempotent.
\end{pro}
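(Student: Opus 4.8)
The plan is to reduce the statement to the subquandle $Y$ and then exhibit an explicit idempotent as a suitably normalized sum of basis elements. Since every subquandle gives an inclusion $\mathcal{I}(\mathbf{k}[Y]) \subseteq \mathcal{I}(\mathbf{k}[X])$, as already observed in the excerpt, it suffices to produce a non-trivial idempotent inside $\mathbf{k}[Y]$. Writing $n = |Y| > 1$, which by hypothesis is invertible in $\mathbf{k}$, the natural candidate is the average
$$
u = \frac{1}{n} \sum_{y \in Y} e_y \in \mathbf{k}[Y],
$$
which is the quandle-theoretic analogue of the group-averaging idempotent $\frac{1}{|G|}\sum_{g \in G} g$ underlying Swan's theorem.

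First I would verify that $u^2 = u$. Expanding via the product in the quandle ring gives
$$
u^2 = \frac{1}{n^2} \sum_{y, z \in Y} e_{y * z}.
$$
The key point is that for each fixed $z \in Y$, the right multiplication $S_z \colon Y \to Y$, $y \mapsto y * z$, is a bijection (indeed an automorphism of $Y$), and, $Y$ being a subquandle, it maps $Y$ into itself. Hence for each fixed $z$ the inner sum $\sum_{y \in Y} e_{y*z}$ equals $\sum_{w \in Y} e_w$, independently of $z$. Summing over the $n$ choices of $z$ then yields $\sum_{y,z \in Y} e_{y*z} = n \sum_{w \in Y} e_w$, so that
$$
u^2 = \frac{1}{n^2} \cdot n \sum_{w \in Y} e_w = \frac{1}{n} \sum_{w \in Y} e_w = u.
$$

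It remains to check that $u$ is a genuine non-trivial idempotent. Since $n$ is invertible in the integral domain $\mathbf{k}$, the coefficient $1/n$ is nonzero, so $u \neq 0$; and because $n \ge 2$, the element $u$ is a $\mathbf{k}$-linear combination of at least two distinct basis elements and hence is not of the form $e_x$ for any $x$. Thus $u$ is a non-trivial idempotent of $\mathbf{k}[Y]$, and therefore of $\mathbf{k}[X]$. There is no substantial obstacle in this argument: the only fact that genuinely has to be invoked is that each $S_z$ permutes $Y$, which is precisely what collapses the double sum, together with the invertibility of $n$ needed to normalize. As a consistency check, one computes $\varepsilon(u) = n \cdot \tfrac{1}{n} = 1$, in agreement with the earlier observation that every idempotent must augment to $0$ or $1$.
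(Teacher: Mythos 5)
Your proof is correct and is essentially the paper's own argument: the paper takes the same averaging element $u=\frac{1}{|Y|}\sum_{y\in Y}e_y$ and calls the verification a ``direct check.'' Your expansion of that check --- using that each $S_z$ restricts to a bijection of the finite subquandle $Y$ to collapse the double sum, and invertibility of $|Y|$ for normalization and non-triviality --- is exactly what the paper leaves implicit.
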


\begin{proof}
Since the subquandle $Y$ has more than one element, a direct check shows that the element $u= \frac{1}{|Y|}\sum_{y \in Y} e_y$ is a non-trivial idempotent of $\mathbf{k}[X]$. 
\end{proof}
 
\begin{rmk}
The converse of Proposition \ref{prop swan analogue} does not hold in general. For example, consider the quandle $X= \{1, 2, 3 \}$ given in terms of its multiplication table as follows:
\begin{center}
\noindent\begin{tabular}{c | c c c}
	*& 1 & 2 & 3  \\
	\cline{1-4}
	1 & 1 & 1 & 2  \\
	2 & 2 & 2 & 1 \\
	3 & 3 & 3 & 3 \\
\end{tabular}\\
\end{center}
Here, $(i, j)$-th entry of the matrix represents the element $i*j$. The quandle ring $\mathbb{Z}[X]$ has non-trivial idempotents of the form $\alpha e_1+ (1-\alpha) e_2$ for $\alpha \in \mathbb{Z}$, but $X$ has no subquandle $Y$ with more than one element such that $|Y|$ is invertible in $\mathbb{Z}$.
\end{rmk}
  
Throughout the rest of the paper, we assume that if $X$ is a finite quandle, then the order $|X|$ of the quandle is not invertible in the coefficient ring $\mathbf{k}$.  Next, we give a basic sufficient condition that guarantees the existence of non-trivial idempotents.  

\begin{pro}\label{trivial subquandle idempotent}
Let $X$ be a quandle containing a  trivial subquandle $Y$ of order more than one. Then $\mathbf{k}[X]$ has non-trivial idempotents.
\end{pro}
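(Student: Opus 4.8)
The plan is to exploit the left-zero multiplicative behaviour that a trivial subquandle induces on the corresponding basis elements, and to look for idempotents supported entirely on $Y$. Since $Y$ is trivial, we have $y*y'=y$ for all $y,y' \in Y$, so in the quandle ring the basis elements obey the simple rule $e_y e_{y'} = e_{y*y'} = e_y$.

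First I would take a general element supported on $Y$, namely $u = \sum_{y \in Y} \alpha_y e_y$ with $\alpha_y \in \mathbf{k}$, and compute its square directly from this relation. The double sum collapses to
\[
u^2 = \sum_{y, y' \in Y} \alpha_y \alpha_{y'} e_{y*y'} = \Big( \sum_{y' \in Y} \alpha_{y'} \Big) \sum_{y \in Y} \alpha_y e_y = \varepsilon(u)\, u,
\]
where $\varepsilon(u) = \sum_{y \in Y} \alpha_y$ is the augmentation of $u$. Hence the idempotent condition $u^2 = u$ is equivalent to $(\varepsilon(u)-1)\,u = 0$.

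Next, since $\mathbf{k}$ is an integral domain and $\mathbf{k}[X]$ is a free $\mathbf{k}$-module, a non-zero $u$ forces $\varepsilon(u) = 1$; conversely, every non-zero $u$ supported on $Y$ with $\varepsilon(u)=1$ is an idempotent. To produce a non-trivial one, I would invoke the two standing hypotheses $|Y| \geq 2$ and $|\mathbf{k}| > 2$: choose distinct $y_1, y_2 \in Y$ and a scalar $\alpha \in \mathbf{k} \setminus \{0,1\}$, and set $u = \alpha e_{y_1} + (1-\alpha) e_{y_2}$. Then $\varepsilon(u) = 1$, so $u$ is an idempotent, and since it is not a single basis element it is non-trivial.

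There is no serious obstacle here; once one observes that a trivial subquandle reduces the restricted multiplication to $e_y e_{y'} = e_y$, the whole argument is a short direct verification, structurally parallel to the computation in Proposition \ref{prop swan analogue}. The only place where the standing hypotheses are genuinely used is the final step, where both $|Y| > 1$ and the existence of a scalar different from $0$ and $1$ are needed to ensure that the idempotent produced is actually non-trivial.
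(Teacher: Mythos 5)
Your proof is correct and is essentially the paper's own argument: the paper likewise takes $u=\sum_{i=1}^n \alpha_i e_{y_i}$ with $n\ge 2$, $y_i\in Y$ and $\sum_i\alpha_i=1$, and verifies $u^2=u$ by the same direct computation using $e_y e_{y'}=e_y$. If anything, yours is slightly more careful: by explicitly choosing $\alpha\in\mathbf{k}\setminus\{0,1\}$ (via the standing hypothesis $|\mathbf{k}|>2$) you guarantee the idempotent is not a single basis element, a point the paper's ``direct check'' leaves implicit.
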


\begin{proof}
Consider the element $u= \sum_{i=1}^n \alpha_i e_{y_i}$, where $n \ge2$,  $y_i \in Y$ and $\alpha_i \in \mathbf{k}$ such that  $\sum_{i=1}^n \alpha_i=1$. A direct check shows that $u^2=u$, and hence $u$ is a non-trivial idempotent of $\mathbf{k}[X]$.
\end{proof}

\begin{lem}\label{faithful}
Let $X$ be a faithful quandle.  If $x, y \in X$ be two distinct elements such that $x * y=x$, then $y * x=y$.
\end{lem}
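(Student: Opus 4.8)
The plan is to translate both the hypothesis and the desired conclusion into statements about the right multiplication maps $S_x, S_y \in \Inn(X)$ and then exploit faithfulness. The crucial tool is the conjugation formula $S_{a * b} = S_b \, S_a \, S_b^{-1}$, valid for all $a, b \in X$, which follows directly from the right distributivity axiom: expanding $(a * b) * c = (a * c)*(b * c)$ in terms of the $S$-maps gives $S_c S_b = S_{b * c} S_c$, and rearranging yields the formula. Since each $S_b$ is an automorphism of $X$, it is invertible, so these manipulations are legitimate.

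First I would feed the hypothesis $x * y = x$ into this formula with $a = x$ and $b = y$. As $S_{x * y} = S_x$, this produces $S_x = S_y S_x S_y^{-1}$, which is equivalent to the commutation relation $S_x S_y = S_y S_x$.

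Next I would compute $S_{y * x}$ using the same formula with the roles reversed, taking $a = y$ and $b = x$; this gives $S_{y * x} = S_x S_y S_x^{-1}$. Substituting the commutation relation just obtained collapses the right-hand side, since $S_x S_y S_x^{-1} = S_y S_x S_x^{-1} = S_y$, so that $S_{y * x} = S_y$.

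Finally, faithfulness of $X$ means exactly that the assignment $z \mapsto S_z$ is injective, so the equality $S_{y * x} = S_y$ of inner automorphisms forces $y * x = y$, which is the claim. I do not anticipate a serious obstacle here: the only points requiring care are the bookkeeping of inverses in the conjugation formula and the observation that faithfulness is precisely what allows one to pass from an equality of inner automorphisms back to an equality of the underlying quandle elements.
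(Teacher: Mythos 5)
Your proof is correct and is essentially the same as the paper's: both use the identity $S_{b*c} = S_c S_b S_c^{-1}$ (equivalently $S_cS_b = S_{b*c}S_c$) twice, first to deduce that $S_x$ and $S_y$ commute from $x*y=x$, then to conclude $S_{y*x}=S_y$, and finally invoke faithfulness to get $y*x=y$. The only difference is cosmetic bookkeeping (explicit conjugation with inverses versus the two-sided form of the identity).
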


\begin{proof}
Since $S_yS_x=S_{x * y} S_y$ and $x * y=x$, it follows that $S_x$ and $S_y$ commute.  Thus, the identity $S_x S_y=S_{y * x} S_x$ implies that $S_{y * x}=S_y$.  Since $X$ is faithful, we obtain $y * x=y$, which is desired.
\end{proof}

\begin{pro}\label{fixed point idempotent}
Let $X$ be a faithful quandle such that $S_x$ has more than one fixed-point for some $x \in X$. Then $\mathbf{k}[X]$ has non-trivial idempotents.
\end{pro}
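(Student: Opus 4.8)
The plan is to reduce this to Proposition \ref{trivial subquandle idempotent} by producing a trivial subquandle of order two. The key observation is that $x$ is always a fixed point of $S_x$, since the quandle axiom gives $S_x(x) = x * x = x$. Hence the hypothesis that $S_x$ has more than one fixed point guarantees the existence of some $y \in X$ with $y \neq x$ and $S_x(y) = y$, that is, $y * x = y$.

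Having fixed such a $y$, the next step is to invoke Lemma \ref{faithful} to obtain the reverse relation. Applying that lemma with the roles of the two elements interchanged, the relation $y * x = y$ together with faithfulness of $X$ yields $x * y = x$. At this point both $x * y = x$ and $y * x = y$ hold, and of course $x * x = x$ and $y * y = y$ by the idempotency axiom of a quandle.

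I would then observe that these four identities say precisely that the two-element set $\{x, y\}$ is closed under $*$ and that every product of its elements equals the left factor; in other words, $\{x, y\}$ is a trivial subquandle of $X$ of order two. Finally, applying Proposition \ref{trivial subquandle idempotent} to this trivial subquandle produces non-trivial idempotents in $\mathbf{k}[X]$, for instance $\alpha e_x + (1 - \alpha) e_y$ for any $\alpha \in \mathbf{k}$ other than $0$ and $1$.

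There is no serious obstacle here: the entire argument is a short deduction, and the only point requiring any care is the correct application of Lemma \ref{faithful}, namely recognizing that a fixed point of $S_x$ corresponds to the hypothesis $y * x = y$ of that lemma after relabeling, so that faithfulness upgrades the one-sided relation to a genuine trivial subquandle rather than merely a pair of mutually fixed points.
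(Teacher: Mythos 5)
Your proof is correct and follows essentially the same route as the paper: extract a fixed point $y \neq x$ of $S_x$, use Lemma \ref{faithful} (with the roles of $x$ and $y$ swapped) to upgrade $y*x=y$ to $x*y=x$, and conclude via Proposition \ref{trivial subquandle idempotent}. The only difference is that you spell out explicitly that $\{x,y\}$ is a trivial subquandle, a step the paper leaves implicit.
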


\begin{proof}
Since $S_x$ has a non-trivial fixed-point, we have $y*x=y$ for some $y \in X$ with $y \ne x$. But, $X$ is faithful, and hence by Lemma \ref{faithful}, we have $x * y=x$. The result now follows from Proposition \ref{trivial subquandle idempotent}.
\end{proof}

\begin{pro}
If $G$ is a non-trivial group, then $\mathbf{k}[\Conj(G)]$ has non-trivial idempotents.
\end{pro}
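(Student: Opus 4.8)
The plan is to reduce the statement to Proposition \ref{trivial subquandle idempotent} by producing a trivial subquandle of $\Conj(G)$ of order greater than one. First I would translate the notion of a trivial subquandle into group-theoretic language. Since the operation on $\Conj(G)$ is $x * y = y x y^{-1}$, the equality $x * y = x$ is equivalent to $y x y^{-1} = x$, that is, to $x$ and $y$ commuting in $G$. Hence a subset $Y \subseteq G$ is a trivial subquandle of $\Conj(G)$ precisely when its elements pairwise commute.

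The key observation is that the identity $e \in G$ commutes with every element of $G$. As $G$ is non-trivial, I would choose $g \in G$ with $g \neq e$ and set $Y = \{e, g\}$; one may equally take the abelian cyclic subgroup $\langle g \rangle$. The elements of $Y$ pairwise commute, so $Y$ satisfies the triviality condition, and it is closed under the operation since $e * g = g e g^{-1} = e$ and $g * e = e g e^{-1} = g$, while the diagonal products are fixed. Thus $Y$ is a trivial subquandle of $\Conj(G)$ of order two.

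Having exhibited such a $Y$, I would simply invoke Proposition \ref{trivial subquandle idempotent}, which immediately yields non-trivial idempotents of $\mathbf{k}[\Conj(G)]$; concretely, every element of the form $\alpha e_e + (1 - \alpha) e_g$ with $\alpha \in \mathbf{k}$ is idempotent. I do not expect any genuine obstacle here: the entire content of the argument is the identification of trivial subquandles with sets of commuting elements together with the centrality of the identity, so the only thing requiring verification is that $Y$ is a trivial subquandle before appealing to the earlier proposition.
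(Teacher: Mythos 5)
Your proposal is correct and follows essentially the same route as the paper: both exhibit a two-element trivial subquandle of $\Conj(G)$ (a pair of distinct commuting elements) and then invoke Proposition \ref{trivial subquandle idempotent}. The paper takes $\{x^i, x^j\}$ for a non-identity $x$ and distinct powers, of which your choice $\{e, g\}$ is just the special case $i=0$, $j=1$.
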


\begin{proof}
Note that, for each non-identity element $x \in G$ and distinct integers $i, j$, the set $ \{x^i, x^j \}$ forms a trivial subquandle of $\Conj(G)$. The result now follows from Proposition \ref{trivial subquandle idempotent}.
\end{proof}

As an application to quandle rings of link quandles, we have

\begin{pro}\label{Hopf link idempotent}
Let $L$ be a link containing the Hopf link and $Q(L)$ the corresponding link quandle of $L$. Then $\mathbf{k}[Q(L)]$ has non-trivial idempotents.
\end{pro}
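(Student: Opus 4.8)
The plan is to reduce the statement to Proposition \ref{trivial subquandle idempotent} by exhibiting a trivial subquandle of order two inside the link quandle $Q(L)$. Recall that $Q(L)$ is generated by meridians of the arcs of a diagram of $L$, subject to one relation per crossing: when an over-arc with meridian $b$ crosses an under-strand whose incoming meridian is $a$, the outgoing meridian of that under-strand is $a *^{\pm 1} b$, the sign depending on the crossing. I would first isolate the two components $K_1$ and $K_2$ of $L$ that form the Hopf link and look only at the two crossings between them. In a diagram in which each of $K_1$ and $K_2$ presents a single arc in this Hopf region, the incoming and outgoing meridians of each under-strand coincide, so the two crossing relations read off directly as $x * y = x$ and $y * x = y$, where $x$ and $y$ are meridians of $K_1$ and $K_2$ respectively; equivalently, $x$ and $y$ generate a copy of the trivial quandle on two elements, which is precisely the fundamental quandle of the Hopf link.

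The next step is to check that $\{x, y\}$ is a trivial subquandle of $Q(L)$. The quandle axioms give $x * x = x$ and $y * y = y$, and the two relations above give $x * y = x$ and $y * x = y$, so $\{x, y\}$ is closed under $*$. For closure under $*^{-1}$, observe that $x * y = x$ says $S_y(x) = x$, whence $S_y^{-1}(x) = x$, i.e. $x *^{-1} y = x$, and symmetrically $y *^{-1} x = y$. Thus $\{x, y\}$ is a trivial subquandle of order two, and Proposition \ref{trivial subquandle idempotent} immediately yields non-trivial idempotents of $\mathbf{k}[Q(L)]$, for instance $\alpha e_x + (1-\alpha)e_y$ with $\alpha \in \mathbf{k}$.

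The delicate point, and the step I expect to be the main obstacle, is justifying that the relations $x * y = x$ and $y * x = y$ genuinely hold in $Q(L)$ and not merely in the quandle of the Hopf link taken in isolation. Since $x * y = x$ is the assertion that $x$ and $y$ commute under the conjugation action, one must ensure that suitable meridians of the two Hopf components can be chosen so that this commutation persists in the presence of the remaining components of $L$. If other components of $L$ thread between $K_1$ and $K_2$, the under-strands at the two mutual crossings acquire additional arcs and the naive reading of the two crossing relations can fail. The honest argument therefore requires a geometric normalization showing that the clasp between $K_1$ and $K_2$ can be presented cleanly, so that the two mutual crossings contribute exactly the pair of relations above; once this is in place, the algebra is routine and the conclusion follows from Proposition \ref{trivial subquandle idempotent}.
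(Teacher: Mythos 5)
Your reduction is the same as the paper's: the paper's entire proof consists of the assertion that, ``from the construction of the link quandle'', $Q(L)$ contains $Q(H)$ as a two-element trivial subquandle, followed by an appeal to Proposition \ref{trivial subquandle idempotent}. You propose exactly this reduction and, correctly, you isolate the point that the paper leaves unargued: the relations $x*y=x$ and $y*x=y$ must hold in $Q(L)$ itself, not merely in $Q(H)$. But your proposal does not establish this; it ends by postulating a ``geometric normalization'' of the clasp that is never carried out. So the argument is incomplete at its only non-routine step.

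Moreover, that step cannot be carried out in the stated generality, so the gap is genuine rather than expository. Composing with the natural quandle homomorphism $Q(L)\to \Conj\big(\pi_1(S^3\setminus L)\big)$, which sends each element to its meridian, the relation $x*y=x$ forces the two meridian images to commute in the link group. Now let $L$ be the three-component chain link $6^3_1$. Every two-component sublink of $L$ is a Hopf link, yet $S^3\setminus L$ is a finite-volume hyperbolic $3$-manifold (the ``magic manifold''). There, every meridian is parabolic with a unique fixed point at infinity, two commuting parabolics must share that fixed point, and the stabilizer of the cusp point of the $i$-th component is exactly the peripheral subgroup $P_i$; combined with Joyce's description of $Q(L)$ as the disjoint union of the coset spaces $\pi_1(S^3\setminus L)/P_i$, these facts imply that two distinct elements of $Q(L)$ never have commuting meridian images. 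Hence $Q(6^3_1)$ contains no two-element trivial subquandle at all, even though $6^3_1$ contains the Hopf link as a sublink: no choice of meridians, diagram, or isotopy produces your relations, because the third component necessarily threads the clasp. What does survive is the split case: if $L$ is a split union of $H$ and a link $L'$, then $Q(L)\cong Q(H)\star Q(L')$ and the free factor $Q(H)$ embeds (retract via the constant homomorphism $Q(L')\to Q(H)$), so Proposition \ref{trivial subquandle idempotent} applies. In short, the obstacle you flagged is not a technicality but a counterexample to the method --- and to the paper's own one-line proof; for a general link containing the Hopf link, the proposition needs either a stronger hypothesis or an entirely different argument (for $6^3_1$ the existence of non-trivial idempotents in $\mathbf{k}[Q(L)]$ appears to be open).
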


\begin{proof}
Let $H$ be the Hopf link. It follows from the construction of the link quandle \cite{MR2628474, MR0672410} that $Q(L)$ contains $Q(H)$ as a subquandle with two elements. The result now follows from Proposition \ref{trivial subquandle idempotent}.
\end{proof}

It has been speculated in \cite{BPS1} that connected quandles have only trivial idempotents. We give two examples showing that this is not true in general.

\begin{ex}\label{involutive and faithful example1}
Let $X= \{1, 2, \ldots, 6 \}$ be the connected quandle of order 6 (see \cite{rig}) given in terms of its right multiplications as follows:
\begin{eqnarray*}
&& S_1=(3\;5)(4\;6), \quad S_2=(3\;6)(4\;5), \quad S_3=(1\;5)(2\;6),\\
&& S_4=(1\;6)(2\;5), \quad S_5=(1\;3)(2\;4), \quad S_6=(1\;4)(2\;3).
\end{eqnarray*}
Note that $X$ has trivial subquandles $\{1,2\}$, $\{3, 4\}$ and $\{5, 6\}$. Hence, by Proposition \ref{trivial subquandle idempotent}, the elements
$\alpha e_1+(1-\alpha) e_2$, $\beta e_3 +(1-\beta)e_4$ and $\gamma e_5+(1-\gamma)e_6$  are non-trivial idempotents of $\mathbb{Z}[X]$ for any $\alpha, \beta, \gamma \in \mathbb{Z}$.
\end{ex}

\begin{ex}\label{involutive and faithful example2}
Consider the connected quandle $X=\{1, 2, \ldots,12\}$ of order $12$ (see \cite{rig}) given in terms of its right multiplications as follows:
\begin{eqnarray*}
&& S_1=(5\; 11 \;7\;9)(6\;12\;8\;10), \quad S_2=(5\; 12 \;7\;10)(6\;11\;8\;9), \quad S_3=(5\; 9 \;7\;11)(6\;10\;8\;12),\\
&& S_4=(5\; 10 \;7\;12)(6\;9\;8\;11), \quad S_5=(1\; 9 \;3\;11)(2\;10\;4\;12), \quad S_6=(1\; 10 \;3\;12)(2\;9\;4\;11),\\
&& S_7=(1\; 11 \;3\;9)(2\;12\;4\;10), \quad S_8=(1\; 12 \;3\;10)(2\;11\;4\;9), \quad S_9=(1\; 7 \;3\;5)(2\;8\;4\;6),\\
&& S_{10}=(1\; 8 \;3\;6)(2\;7\;4\;5), \quad \quad S_{11}=(1\; 5 \;3\;7)(2\;6\;4\;8), \quad \quad S_{12}=(1\; 6 \;3\;8)(2\;5\;4\;7). 
\end{eqnarray*}
We see that $X$ has trivial subquandles $\{1,2,3,4\}$, $\{5,6,7,8\}$ and $\{9,10,11,12\}$. By Proposition \ref{trivial subquandle idempotent}, the elements $\alpha e_1+\beta e_2+\gamma e_3 +(1-\alpha-\beta-\gamma)e_4$, $\alpha e_5+\beta e_6+\gamma e_7 +(1-\alpha-\beta-\gamma)e_8$ and $\alpha e_9+\beta e_{10}+\gamma e_{11} +(1-\alpha-\beta-\gamma)e_{12}$ are non-trivial idempotents of $\mathbb{Z}[X]$ for any $\alpha, \beta, \gamma \in \mathbb{Z}$.
\end{ex}

A computer assisted check \cite{Maple15} for quandles of order less than seven suggests the following.

\begin{conj}\label{Main conjecture}
The integral quandle ring of a semi-latin quandle has only trivial idempotents. In particular, the integral quandle ring of a finite latin quandle has only trivial idempotents.
\end{conj}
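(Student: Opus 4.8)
Since the statement is a universally quantified non-existence result, I would first extract the precise structural role of the hypothesis and then split into the two augmentation cases. Recall from the paragraph preceding Proposition~\ref{prop swan analogue} that every idempotent $u=\sum_{x}\alpha_x e_x$ has $\varepsilon(u)\in\{0,1\}$, so the task is to show that in both cases the support of $u$ is forced to be a single point. The first move is to rewrite $u^2=u$ as a quadratic system. Since each right multiplication $S_y$ is a bijection of $X$, comparing coefficients of $e_z$ yields
\begin{equation*}
\alpha_z=\sum_{y\in X}\alpha_{S_y^{-1}(z)}\,\alpha_y\qquad\text{for every }z\in X.
\end{equation*}
The semi-latin hypothesis enters as injectivity of each left multiplication $L_x$, i.e. $x*y=x*w$ implies $y=w$. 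I would record at once its qualitative consequence: a semi-latin quandle has no trivial subquandle of order $>1$, since such a subquandle $\{y_1,y_2\}$ would force $L_{y_1}(y_1)=L_{y_1}(y_2)=y_1$. Thus the hypothesis removes exactly the mechanism that produced idempotents in Proposition~\ref{trivial subquandle idempotent}, and the heart of the problem is to show that injectivity of the $L_x$ rigidifies the convolution system above enough to collapse $\mathrm{supp}(u)$.

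For \emph{infinite} semi-latin quandles I would adapt the leading-term technique underlying the zero-divisor theorem for orderable semi-latin quandles in \cite{BPS1}. When $X$ carries a compatible total order, order-preservation of the $S_y$ together with injectivity of the $L_x$ isolates in $u^2$ a surviving extreme term whose coefficient is a product of extreme coefficients of $u$, hence non-zero in the domain $\mathbf{k}$; matching this with $u=u^2$ and peeling off the extreme elements forces $\mathrm{supp}(u)$ to collapse to a single point. This settles every orderable semi-latin quandle, in particular free quandles.

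The finite \emph{latin} case needs a different idea, as a non-trivial finite quandle admits no compatible order. Here the multiplication table is a Latin square, every $S_y$ and $L_x$ is a bijection, and the system becomes the fixed-vector equation $\beta=M\beta$ with $\beta=(\alpha_z)_z$ and $M=\sum_y\alpha_y P_y$ an integral combination of the permutation matrices of the $S_y$, whose row and column sums all equal $\varepsilon(u)$. I would first dispatch the medial latin quandles, which are affine quandles $x*y=tx+(1-t)y$ over a finite abelian group $A$ with $1-t$ invertible: passing to $\mathbb{C}[X]$ and applying the Fourier transform $\widehat{\alpha}(\chi)=\sum_{x}\alpha_x\chi(x)$ converts $u^2=u$ into the multiplicative system
\begin{equation*}
\widehat{\alpha}(\chi)=\widehat{\alpha}(\chi\circ t)\,\widehat{\alpha}\big(\chi\circ(1-t)\big)\qquad\text{for every }\chi\in\widehat{A},
\end{equation*}
where $\chi\circ t$ denotes the character $x\mapsto\chi(tx)$; this can be solved orbit-by-orbit under the maps $\chi\mapsto\chi\circ t$ and $\chi\mapsto\chi\circ(1-t)$ to show that only the trivial idempotents survive. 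For the non-medial latin quandles I would argue by induction on $|\mathrm{supp}(u)|$, taking the computation behind Proposition~\ref{no three elements prop} as the base case and using the row and column constraints of the Latin square to drive a minimal counterexample below the inductive threshold.

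The genuine obstacle, and the reason the statement remains a conjecture, is the replacement for orderability in the finite case. The $\alpha_z$ are arbitrary integers, so there is no positivity and Perron--Frobenius does not apply to $M=\sum_y\alpha_y P_y$; non-associativity prevents iterating $u^2=u$ to extract spectral information in the usual way; and, with no Lagrange theorem, the support size is a priori unbounded. Gaining uniform control of the quadratic convolution system for an arbitrary Latin square --- rather than for fixed small support or a fixed family such as the dihedral quandles $R_n$ with $n$ odd --- is exactly what is missing, which is why only the computer-verified range $|X|<7$ and the restricted results for $\Core(G)$ are currently available.
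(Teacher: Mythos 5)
The statement you are proving is labelled a \emph{conjecture} in the paper, and the paper contains no proof of it: the only support offered is a computer check for quandles of order less than seven, Proposition~\ref{no three elements prop} (which handles $\Core(G)$ and supports of size at most three), and Theorem~\ref{idempotents in free products} for free quandles, which the paper proves by a length-function and inner-automorphism argument on free products rather than by orderability. So there is no paper proof to match your attempt against; the only question is whether your proposal settles the conjecture, and --- as you candidly admit in your final paragraph --- it does not. Your preliminary reductions are correct and consistent with the paper: $\varepsilon(u)\in\{0,1\}$, the coefficient system $\alpha_z=\sum_{y}\alpha_{S_y^{-1}(z)}\alpha_y$, and the observation that semi-latinness forbids trivial subquandles of order greater than one, which is exactly the mechanism behind Proposition~\ref{trivial subquandle idempotent}.

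Beyond that, each branch of your program has a concrete gap. First, semi-latin does not imply orderable, so the leading-term argument, even if completed, covers only a proper subclass of the infinite case; and it is not complete as sketched: right-orderability controls $x*z$ versus $y*z$ but says nothing about $z*x$ versus $z*y$ (semi-latinness gives injectivity of $L_x$, not monotonicity), so after you extract the extreme coefficients (only $\alpha_{\max}=\alpha_{\min}=1$ comes for free), products coming from interior support elements can interleave with and cancel against the terms you are peeling. Note also that the zero-divisor theorem of \cite{BPS1} you invoke does not by itself yield any idempotent statement: quandle rings have no unity, so $u^2=u$ cannot be factored as $u(u-1)=0$. Second, the Fourier step is miscalibrated: for any finite quandle $X$ the complex ring $\mathbb{C}[X]$ \emph{does} have non-trivial idempotents (e.g. $\frac{1}{|X|}\sum_{x\in X}e_x$), so your multiplicative system over $\widehat{A}$ necessarily has non-trivial solutions; the conjecture is an integrality statement, and your sketch never reimposes integrality on those solutions, which is where the entire difficulty lives. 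Third, the proposed induction for non-medial latin quandles has neither a base case (Proposition~\ref{no three elements prop} concerns $\Core(G)$ of an abelian group, a medial family, not an arbitrary latin quandle) nor a described inductive step. In short: a reasonable research program, honestly assessed, whose conclusion coincides with the paper's own position --- the statement remains open.
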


It follows from \cite[Theorem 3.5]{BPS1} that free quandles are semi-latin. We shall prove in Theorem \ref{idempotents in free products} that free quandles satisfy Conjecture \ref{Main conjecture}. Further, the next result gives more supporting evidence to the conjecture.

\begin{pro}\label{no three elements prop}
Let $G$ be an abelian group without 2 and 3-torsion. Then $\mathbb{Z}[\Core(G)]$ has no non-trivial idempotent built up with at most three distinct basis elements.
\end{pro}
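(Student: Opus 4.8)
The plan is to analyse an idempotent $u$ according to the size of its support, writing $u=\alpha e_a$, or $u=\alpha e_a+\beta e_b$, or $u=\alpha e_a+\beta e_b+\gamma e_c$ with $a,b,c$ distinct elements of $G$ and $\alpha,\beta,\gamma\in\mathbb{Z}$ nonzero. Throughout I would use that in $\Core(G)$ the operation is $x*y=2y-x$, so that a product of basis elements reads $e_x e_y=e_{2y-x}$. The one-element case is immediate: $e_a e_a=e_a$ forces $\alpha^2=\alpha$, hence $\alpha=1$ and $u=e_a$ is trivial.

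For the two-element case I would expand $u^2=\alpha^2 e_a+\beta^2 e_b+\alpha\beta\,(e_{2b-a}+e_{2a-b})$. The crucial point is that the hypotheses rule out all collisions among the indices: $2b-a=a$ or $2a-b=b$ would give $2(b-a)=0$, impossible without $2$-torsion, while $2b-a=2a-b$ would give $3(b-a)=0$, impossible without $3$-torsion; similarly $2b-a\neq b$ and $2a-b\neq a$. Thus $2b-a$ and $2a-b$ are two genuinely new basis elements, and comparing the coefficients in $u^2=u$ forces $\alpha\beta=0$, contradicting that $\mathbb{Z}$ is a domain with $\alpha,\beta\neq 0$.

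The three-element case is the heart of the argument and the main obstacle, because now the derived elements $2y-x$ (for $x\neq y$ in $\{a,b,c\}$) may collide with $a,b,c$. I would first show that such a collision occurs precisely when one of $a,b,c$ is the \emph{midpoint} of the other two (for instance $2b-a=c\iff a+c=2b$), and that at most one such relation can hold: two of them, say $a+c=2b$ and $b+c=2a$, would subtract to $3(a-b)=0$, impossible without $3$-torsion. This splits the analysis into two sub-cases. If no element is a midpoint, then all six off-diagonal products $2y-x$ avoid $\{a,b,c\}$, so matching the coefficients of $e_a,e_b,e_c$ in $u^2=u$ forces $\alpha=\beta=\gamma=1$; but then every off-diagonal product contributes a strictly positive coefficient to some new basis element, and such positive contributions cannot cancel in $\mathbb{Z}$, so $u^2\neq u$.

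In the remaining sub-case exactly one element is a midpoint, say $a+c=2b$ (the other labelings being symmetric). Here I would track precisely which products land on $\{a,b,c\}$: the relation gives $2b-a=c$ and $2b-c=a$, while one checks, using that $a$ and $c$ are \emph{not} midpoints, that no off-diagonal product equals $b$. Comparing coefficients then yields $\beta^2=\beta$, hence $\beta=1$, together with $\alpha^2+\beta\gamma=\alpha$ and $\gamma^2+\alpha\beta=\gamma$. Substituting $\beta=1$ and adding the last two equations gives $\alpha^2+\gamma^2=0$, which in $\mathbb{Z}$ forces $\alpha=\gamma=0$, the desired contradiction. I expect the bookkeeping of which products collide with $\{a,b,c\}$, and verifying that it is governed entirely by the single midpoint relation, to be the most delicate part; the absence of $2$- and $3$-torsion is exactly what makes every stray collision impossible.
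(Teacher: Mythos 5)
Your proof is correct and follows essentially the same route as the paper's: expand $u^2$, use the absence of $2$- and $3$-torsion to control which derived indices $2y-x$ can collide with the support, and in the single-midpoint case derive $\beta=1$ together with $\alpha=\alpha^2+\beta\gamma$ and $\gamma=\gamma^2+\alpha\beta$, which sum to the impossible $\alpha^2+\gamma^2=0$ over $\mathbb{Z}$. The only cosmetic difference is the endgame in the no-collision case: you rule out $\alpha=\beta=\gamma=1$ by observing that the six off-diagonal products enlarge the support with positive coefficients, whereas the paper simply notes that $\varepsilon(u)=3$ is impossible for an idempotent.
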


\begin{proof}
Let $u=\alpha e_x + \beta e_y + \gamma e_z$ be an idempotent of $\mathbb{Z}[\Core(G)]$, where $x, y, z \in G$ are distinct and $\alpha, \beta, \gamma \in \mathbb{Z}$.  We have the following two cases:
\par
Case 1: Suppose that precisely two of $\alpha$, $\beta$ and $\gamma$ are non-zero. Without loss of generality, we can take $u=\alpha e_x + \beta e_y$ with $\alpha \ne 0$ and $\beta \ne 0$. Then 
$u=u^2$ gives 
\begin{small}
\begin{equation}\label{eq1}
\alpha e_x + \beta e_y =\alpha^2 e_x + \beta^2 e_y +  \alpha\beta e_{2x-y} + \alpha\beta e_{2y-x}.
\end{equation}
\end{small}
Clearly, $e_{2x-y} \neq e_x$ and $e_{2y-x} \neq e_{y}$. Further, $e_{2x-y} \neq e_{y}$ and $e_{2y-x} \neq e_x$ since $G$ has no 2-torsion. This gives $\alpha\beta=0$, which is a contradiction. Hence, this case does not arise.
\par
Case 2: Suppose that all of  $\alpha$, $\beta$ and $\gamma$ are non-zero.  Then 
$u=u^2$ gives
\begin{small}
\begin{equation}\label{eq2}
\alpha e_x + \beta e_y + \gamma e_z=\alpha^2 e_x + \beta^2 e_y + \gamma^2 e_z +  \alpha\beta e_{2x-y} + \alpha\beta e_{2y-x} + \beta \gamma e_{2y-z} + \beta\gamma e_{2z-y}+ \alpha\gamma e_{2z-x} + \alpha\gamma e_{2x-z}. 
\end{equation}
\end{small}
Note that $e_{2y-z} \ne e_{2z-y}$, $e_{2y-x} \ne e_{2x-y}$ and $e_{2z-x} \ne e_{2x-z}$ since $G$ has no 3-torsion. Clearly, $e_x \ne e_y, e_z,  e_{2x-y}, e_{2x-z}$. Further,  $e_x \ne e_{2y-x}, e_{2z-x}$ since $G$ has no 2-torsion. Thus, $e_x$ equals to at most one of $e_{2y-z}$ and $e_{2z-y}$. Similarly, $e_y$ coincides with at most one of $e_{2x-z}$ and $e_{2z-x}$. And, $e_z$ equals to at most one of $e_{2x-y}$ and $e_{2y-x}$. We now compare coefficients of $e_x$ on both the sides of \eqref{eq2}. 
\par
Case 2(a): If $e_x= e_{2y-z}$, then
\begin{equation}\label{eq3}
\alpha= \alpha^2+\beta \gamma.
\end{equation}
But, $e_x= e_{2y-z}$ also implies that $e_z= e_{2y-x}$. Comparing coefficients of $e_z$ on both the sides of \eqref{eq2} gives
\begin{equation}\label{eq4}
\gamma= \gamma^2 +\alpha\beta. 
\end{equation}
Adding \eqref{eq3} and  \eqref{eq4} gives $\alpha+ \gamma=\alpha^2+ \gamma^2 + \beta(\alpha+ \gamma)$. Now we compare coefficients of $e_y$ on both the sides of \eqref{eq2}. If $e_y$ appears only once on the right hand side of \eqref{eq2}, then $\beta=\beta^2$, and hence $\beta=1$. This gives $\alpha^2+ \gamma^2=0$. Since the coefficients are from $\mathbb{Z}$, this implies that $\alpha=\gamma=0$, which is a contradiction. If $e_y= e_{2x-z}$, then $e_z= e_{2x-y}$, a contradiction. Similarly, if $e_y= e_{2z-x}$, then $e_x= e_{2z-y}$, which is again a contradiction. Hence Case 2(a) does not arise.
\par
Case 2(b): If $e_x= e_{2z-y}$, then proceeding as above, we see that this case does not arise.
\par
Hence, it follows that $e_x$ appears on the right hand side of \eqref{eq2} precisely once. Thus, $\alpha=\alpha^2$, and consequently $\alpha=1$. Repeating the process for $e_y$ and $e_z$, we obtain $\beta=1$ and $\gamma=1$. But, this gives $\varepsilon(u)=3$, a contradiction. Hence $\mathbb{Z}[\Core(G)]$ has no non-trivial idempotent built up with at most three distinct basis elements.
\end{proof}

\begin{cor}
If $n$ is coprime to 2 and 3, then $\mathbb{Z}[R_n]$ has no non-trivial idempotent built up with at most three distinct basis elements.
\end{cor}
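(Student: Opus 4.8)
The plan is to obtain this as an immediate consequence of Proposition \ref{no three elements prop}. The key observation is that the dihedral quandle $R_n$ is precisely the core quandle $\Core(\mathbb{Z}/n\mathbb{Z})$ of the cyclic group of order $n$, as recalled in Section \ref{review}: writing the group additively, the operation $x*y=2y-x$ on $\{0,1,\ldots,n-1\}$ is exactly the core operation. Thus it suffices to check that, under the stated coprimality hypothesis, the group $\mathbb{Z}/n\mathbb{Z}$ satisfies the hypotheses of the proposition, namely that it has neither $2$-torsion nor $3$-torsion.

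This verification is straightforward. If $a \in \mathbb{Z}/n\mathbb{Z}$ satisfies $2a=0$, then $n \mid 2a$; since $\gcd(2,n)=1$, the integer $2$ is a unit modulo $n$, forcing $a=0$, so the only $2$-torsion element is trivial. The same argument with $3$ in place of $2$, using $\gcd(3,n)=1$, shows that $\mathbb{Z}/n\mathbb{Z}$ has no non-trivial $3$-torsion. Hence $G=\mathbb{Z}/n\mathbb{Z}$ is an abelian group without $2$- and $3$-torsion.

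Applying Proposition \ref{no three elements prop} to $G=\mathbb{Z}/n\mathbb{Z}$ then yields that $\mathbb{Z}[\Core(\mathbb{Z}/n\mathbb{Z})]=\mathbb{Z}[R_n]$ has no non-trivial idempotent built up with at most three distinct basis elements, which is the assertion. Since the entire content is packaged in the proposition, I do not expect any substantial obstacle here; the only point requiring care is the torsion-freeness computation above, which is precisely the translation of the number-theoretic condition that $n$ is coprime to $2$ and $3$ into the group-theoretic hypothesis of the proposition.
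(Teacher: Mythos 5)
Your proposal is correct and is exactly the argument the paper intends (the corollary is stated without proof as an immediate consequence of Proposition \ref{no three elements prop}): identify $R_n$ with $\Core(\mathbb{Z}/n\mathbb{Z})$ and note that coprimality of $n$ to $2$ and $3$ means $\mathbb{Z}/n\mathbb{Z}$ has no $2$- or $3$-torsion. The torsion verification you spell out is the only detail to check, and you have done it correctly.
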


\begin{rmk}\label{quasi group example}
Given a non-empty set $X$ and a ring $\mathbf{k}$, let  $\mathbf{k}[X]$ be the free  $\mathbf{k}$-module on the set $X$. Then a binary operation on $X$ can be used to define a ring structure on $\mathbf{k}[X]$ by imitating the construction of a quandle or a group ring. An {\it idempotent quasigroup} is a set $X$ with a binary operation such that both left and right multiplications by elements of $X$ are bijections of $X$ and $x*x=x$ for all $x \in X$. It is worth mentioning that Conjecture \ref{Main conjecture} does not hold if we replace latin quandles by idempotent quasigroups.  As a counterexample, consider the  idempotent quasigroup with multiplication table as follows:
\begin{center}
\noindent\begin{tabular}{c | c c c c c c c c}
	*& 1 & 2 & 3 & 4 & 5 & 6& 7 & 8  \\
	\cline{1-9}
	1 & 1 & 3 & 2 & 5 & 6 & 4& 8 & 7 \\
	2 & 5 & 2 & 1 & 7 & 8 & 3& 4 & 6 \\
	3 & 4 & 6 & 3 & 8 & 7 & 1& 5 & 2 \\
	4 & 6 & 8 & 7 & 4 & 3 & 5& 2 & 1 \\
	5 & 8 & 7 & 4 & 6 & 5& 2& 1 & 3  \\
	6 & 7 & 4 & 8 & 2 & 1& 6& 3 & 5  \\
	7 & 3 & 5 & 6 & 1 & 2 & 8& 7 & 4 \\
	8 & 2 & 1 & 5 & 3 & 4 & 7& 6 & 8 
\end{tabular}\\
\end{center}

A direct computation shows that $u=e_2-e_3-e_6+e_7$ is an idempotent of the ring $\mathbb{Z}[X]$.  This suggests that a proof of Conjecture \ref{Main conjecture} should use the right-distributivity of the quandle in an essential way.
\end{rmk}

The following result is interesting in its own.

\begin{pro}\label{medial endomorphisms}
Let $X$ be a medial quandle. Then the following hold:
\begin{enumerate}
\item The right multiplication by an idempotent is a ring endomorphism of  $\mathbf{k}[X]$.
\item If $X$ is finite, then right multiplications by distinct idempotents give distinct ring endomorphisms of  $\mathbf{k}[X]$.
\end{enumerate}
\end{pro}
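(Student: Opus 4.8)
The plan is to prove the two parts by rather different means. For part (1), I would reduce multiplicativity of $R_u$ to basis elements by $\mathbf{k}$-bilinearity, and then run a single computation fusing mediality with idempotency. Writing $u=\sum_{c}\gamma_c e_c$ with $u^2=u$, for $a,b\in X$ one has
\[
(e_a u)(e_b u)=\sum_{c,d}\gamma_c\gamma_d\, e_{(a*c)*(b*d)}=\sum_{c,d}\gamma_c\gamma_d\, e_{(a*b)*(c*d)}=e_{a*b}\,u^2=e_{a*b}\,u=(e_a e_b)\,u,
\]
where the second equality is the medial law and the fourth is $u^2=u$. Since $R_u$ is visibly $\mathbf{k}$-linear, this gives $R_u(vw)=R_u(v)R_u(w)$ for all $v,w$, so $R_u$ is a ring endomorphism of $\mathbf{k}[X]$. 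I expect no real obstacle here; the only point to watch is that the medial rearrangement naturally produces $e_{a*b}\,u^2$, and it is precisely idempotency that collapses this back to $e_{a*b}\,u$.

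For part (2), I would translate the conclusion into a statement about annihilators. If $u,u'$ are idempotents with $R_u=R_{u'}$, then $w:=u-u'=\sum_x\delta_x e_x$ satisfies $vw=0$ for every $v\in\mathbf{k}[X]$; testing against $v=e_a$ gives, for each $a\in X$,
\[
e_a w=\sum_{x}\delta_x\, e_{a*x}=\sum_{b\in X}\Big(\sum_{x\in L_a^{-1}(b)}\delta_x\Big)e_b=0,
\]
so the coefficients of $w$ sum to zero on every fiber of every left translation $L_a\colon x\mapsto a*x$. First I would record the cheap consequences: taking $v=u$ and $v=u'$ in $vu=vu'$ gives $uu'=u$ and $u'u=u'$, whence $\varepsilon(u)=\varepsilon(u')$ and $\varepsilon(w)=0$; and, using finiteness, $\sigma:=\sum_{a\in X}e_a$ yields $0=\sigma w=\varepsilon(w)\sigma$ via bijectivity of the right translations $S_x$, which confirms consistency but produces nothing new. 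The substantive goal is then to upgrade the fiber-balance conditions to $w=0$, leveraging the medial law to transport balance across the automorphisms $S_x$, which permute the fibers of the various $L_a$ in a compatible fashion.

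The step I expect to be the main obstacle is exactly this last upgrade, and it is the genuine heart of the matter. The dual problem is painless: testing $(u-u')e_a=0$ instead invokes $\sum_{x\in S_a^{-1}(b)}\delta_x=0$, and since each right translation $S_a$ is a bijection these fibers are singletons, forcing $w=0$ at once — so the analogous injectivity for left multiplication holds for every finite quandle. For right multiplication the fibers of $L_a$ may be large, and the balance conditions by themselves do not obviously pin down $w$; the crux is to show that no nonzero difference of two idempotents can lie in the right annihilator of $\mathbf{k}[X]$, and this is where mediality and finiteness — rather than idempotency alone — must be used decisively. I anticipate the delicate case to be quandles whose left translations fail to be injective, where fibers genuinely collapse, and the heart of the argument will be excluding these configurations; the idempotent-quasigroup of Remark \ref{quasi group example}, where right-distributivity is dropped and this rigidity is lost, is a sharp reminder that the medial (right-distributive) structure has to enter the argument in an essential way.
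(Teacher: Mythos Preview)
Your argument for part (1) is correct and is essentially the paper's proof, just written a bit more compactly: reduce to basis elements by bilinearity, apply the medial identity to rewrite $(a*c)*(b*d)$ as $(a*b)*(c*d)$, and use $u^2=u$ to collapse the resulting $e_{a*b}\,u^2$ back to $e_{a*b}\,u$.

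For part (2), however, you have correctly located a genuine obstruction --- and it is fatal, not merely delicate. The paper's own proof simply evaluates $\hat S_u(e_k)=\sum_i\alpha_i e_{k*i}$ and $\hat S_v(e_k)=\sum_i\beta_i e_{k*i}$ and asserts ``this gives $\alpha_i=\beta_i$ for all $i$''. That inference tacitly assumes the left translation $L_k\colon i\mapsto k*i$ is injective, i.e.\ that $X$ is semi-latin. For a general medial quandle this fails, and in fact the statement as written is false: take any trivial quandle $X$ with $|X|\ge 2$ (trivial quandles are medial). Then $e_k\cdot e_x=e_{k*x}=e_k$ for every $x$, so $\hat S_u$ is multiplication by $\varepsilon(u)$ for every $u$, and hence $\hat S_{e_{x}}=\hat S_{e_{x'}}=\Id$ for any two distinct trivial idempotents $e_x\neq e_{x'}$. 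More generally, whenever the covering-type idempotents of Theorem~\ref{quandle covering theorem} are present, many distinct idempotents share the same right multiplication (cf.\ Corollary~\ref{idempotens form quandle}(2), which says exactly that each such right multiplication coincides with $\hat S_{x_0}$ for some basis element).

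So your instinct that the ``fibre-balance'' conditions $\sum_{x\in L_a^{-1}(b)}\delta_x=0$ are too weak to force $w=0$ is exactly right; they cannot be upgraded without an injectivity hypothesis on the left translations. The repair is not to push harder on mediality but to add the hypothesis that $X$ is semi-latin (equivalently, each $L_a$ is injective); under that assumption the paper's one-line comparison of coefficients is valid, and your own ``dual'' observation already gives the argument.
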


\begin{proof}
Let $u= \sum_{i=1}^n \alpha_i e_i$ be an idempotent of $\mathbf{k}[X]$. Let $\hat{S}_u: \mathbf{k}[X] \to \mathbf{k}[X]$ be the map given by
$\hat{S}_u (w)= wu$ for all $w \in  \mathbf{k}[X]$. Let $e_k, e_l$ be two basis elements of $\mathbf{k}[X]$. Then, we see that
\begin{eqnarray*}
\hat{S}_u (e_k e_l) &=&  e_{k* l}~  \big(\sum_{i, j=1}^n \alpha_i \alpha_j e_{i*j} \big),~\quad \textrm{since}~ u=u^2\\
&=& \sum_{i, j=1 }^n \alpha_i \alpha_j  e_{(k *l)* (i *j)}\\
&=& \sum_{i, j=1 }^n \alpha_i \alpha_j  e_{(k *i)(l *j)},~\quad \textrm{since}~ X~ \textrm{is medial}\\
&=& \sum_{i, j=1 }^n \alpha_i \alpha_j  e_{k *i}e_{l *j}\\
&=& \big(\sum_{i=1 }^n\alpha_i e_{k *i}\big) \big(\sum_{j=1 }^n \alpha_j e_{l *j}\big)\\
&=& \hat{S}_u (e_k)~ \hat{S}_u( e_l).
\end{eqnarray*}
Since $\hat{S}_u$ is $\mathbf{k}$-linear, it is a ring homomorphism of $\mathbf{k}[X]$, which proves (1).
\par
For assertion (2), suppose that $X$ is finite of order $n$. Let $u= \sum_{i=1}^n \alpha_i e_i$ and $v= \sum_{i=1}^n \beta_i e_i$ be two idempotents of $\mathbf{k}[X]$. If $\hat{S}_u= \hat{S}_v$, then $$ \sum_{i=1}^n \alpha_i e_{k*i}=\hat{S}_u(e_k)=\hat{S}_v(e_k)=\sum_{i=1}^n \beta_i e_{k*i}$$ for any basis element $e_k$. This gives $\alpha_i=\beta_i$ for all $i$, which shows that $u=v$.
\end{proof}

\begin{rmk}\label{failure injectivity right mult}
Consider the quandle $X$ of Example \ref{involutive and faithful example1}. Take  $\mathbf{k}= \mathbb{Q}$, $u=\frac{1}{2}(e_1+e_2)$ and $\hat{S}_u$ the right multiplication by $u$. Then $u$ is an idempotent of $\mathbb{Q}[X]$ and  $e_3-e_4  \in \ker(\hat{S}_u)$. Thus, the $\mathbf{k}$-linear map $\hat{S}_u$ need not be injective in general.
\end{rmk}

\begin{rmk}\label{failure of distributivity}
The set of idempotents of a quandle ring fails to satisfy right distributivity in general. For example, consider the quandle ring  $\mathbb{Z}[X]$ of the quandle $X$ of Example \ref{involutive and faithful example1}. Take the idempotents $u=e_1$, $v=e_4$ and $w=\alpha e_5 + (1- \alpha) e_6$ for $\alpha \ne 0, 1$. Then a direct check shows that $(uv)w=e_6$, whereas $(uw)(vw)=(2 \alpha- 2\alpha^2) e_5+ (2 \alpha^2-2 \alpha +1) e_6$.
\end{rmk}

\begin{rmk}
Let $X$ be a non-trivial quandle and $\mathbf{k}$ a field. On contrary to associative algebras, the right multiplication $\hat{S}_u$ by an idempotent $u$ of $\mathbf{k}[X]$ is not a projection of the underlying $\mathbf{k}$-vector space $\mathbf{k}[X]$. Thus, the spectrum of the idempotent $u$ (defined as the spectrum of the $\mathbf{k}$-linear map $\hat{S}_u$) may be arbitrary. See \cite[Section 7]{ENS2022} for some related results.
\end{rmk}

\medskip

\section{Idempotents and quandle coverings}\label{coverings and idempotents}
In this section, we use the idea of a quandle covering for giving a precise description of idempotents in quandle rings of quandles of finite type. The notion of a quandle covering is attributed to the work of Eisermann \cite{MR1954330, MR3205568}. 
\par

A quandle homomorphism $p: X  \to Y$ is called a {\it quandle covering} if it is surjective and $S_x=S_{x'}$ whenever $p(x) = p(x')$ for any $x, x' \in X$. Clearly, an isomorphism of quandles is a quandle covering, called a {\it trivial covering}. 

\begin{ex}
Some basic examples of quandle coverings are:
\begin{enumerate}
\item A surjective group homomorphism $p: G \to H$ yields a quandle covering $\Conj(G) \to \Conj(H)$ if and only if $\ker(p)$ is a central subgroup of $G$.
\item A surjective group homomorphism $p: G\to H$ yields a quandle covering $\Core(G) \to \Core(H)$ if and only if $\ker(p)$ is a central subgroup of  $G$ of exponent two.
\item Let $X$ be a quandle and  $F$ a non-empty set viewed as a trivial quandle. Consider $X \times F$ with the product quandle structure $(x,s) * (y,t) = (x *y,s)$. Then the projection $p: X \times F \to X$ given by $(x,s) \to x$ is a quandle covering, called trivial covering with fibre $F$.
\item Let $X$ be a quandle and $A$ an abelian group. A map $\alpha: X \times X \to A$ is called a quandle 2-cocycle if it satisfies 
\begin{equation*} \label{group-coefficient-cocycle-condition}
\alpha_{x, y}~\alpha_{x*y, z}= \alpha_{x, z}~\alpha_{x* z, y*z}
\end{equation*}
and 
\begin{equation*}\label{normalised-cocycle-condition}
\alpha_{x, x}=1
\end{equation*}
for $x, y, z \in X$. Given a 2-cocyle  $\alpha$, the set $X \times A$ turns into a quandle with the binary operation
\begin{equation*}\label{dynamical-quandle-operation}
(x, s)* (y,t)= \big( x* y, ~s~\alpha(x, y) \big),
\end{equation*} 
for $x, y \in X$ and $s, t \in A$. The quandle so obtained is called an  {\it extension} of $X$ by $A$ through $\alpha$, and is denoted by $X \times_{\alpha} A$. We refer the reader to \cite{MR1994219} for generalities and related results. A direct check shows that the projection $p:X \times_{\alpha} A \to X$ given by $p(x, s)=x$ is a quandle covering.
\end{enumerate}
\end{ex}

The following lemma summarises some basic properties of quandle coverings, which we shall use without stating explicitly.

\begin{lem}\label{quandle covering properties}
If $p: X  \to Y$ is a quandle covering, then the following hold:
\begin{enumerate}
\item Each fibre $p^{-1}(y)$ is a trivial subquandle of $X$. 
\item Each inner automorphism of $X$ permutes fibres.
\item The fibres over any two elements of the same connected component of $Y$ are isomorphic.
\end{enumerate}
\end{lem}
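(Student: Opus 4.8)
The plan is to prove the three properties of a quandle covering $p: X \to Y$ directly from the defining condition that $S_x = S_{x'}$ whenever $p(x) = p(x')$.

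For property (1), I would fix a fibre $p^{-1}(y)$ and take any two elements $a, b \in p^{-1}(y)$. Since $p(a) = p(b) = y$, the covering condition gives $S_a = S_b$. To show the fibre is a trivial subquandle, I must verify both that it is closed under $*$ and that the operation is trivial on it, i.e. $a * b = a$. The key computation is $p(a * b) = p(a) * p(b) = y * y = y$, so $a * b$ lies in the same fibre, giving closure. For triviality, I would use $a * b = S_b(a) = S_a(a) = a$, where the middle equality is the covering condition $S_b = S_a$ and the last is the quandle axiom that $S_a$ fixes $a$. Thus $a * b = a$ for all $a, b$ in the fibre, so $p^{-1}(y)$ is a trivial subquandle.

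For property (2), I would take an inner automorphism $\phi \in \Inn(X)$; it suffices to handle a generator $S_z$ and its inverse, since the general case follows by composition. I want to show $\phi$ maps a fibre into a fibre. If $a, b \in p^{-1}(y)$, then I would compute $p(S_z(a)) = p(a * z) = p(a) * p(z) = y * p(z)$ and likewise $p(S_z(b)) = y * p(z)$, so $S_z(a)$ and $S_z(b)$ land in the common fibre $p^{-1}(y * p(z))$. Since $S_z$ is a bijection of $X$ and maps each fibre into a single fibre, a counting or injectivity argument shows it actually permutes the fibres; the same reasoning applies to $S_z^{-1}$ with $y *^{-1} p(z)$, and hence to every element of $\Inn(X)$.

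For property (3), suppose $y$ and $y'$ lie in the same connected component of $Y$, so there is $\psi \in \Inn(Y)$ with $\psi(y) = y'$. Because $p$ is surjective and a quandle homomorphism, I would lift $\psi$ to an element $\tilde\psi \in \Inn(X)$ built from the corresponding right multiplications upstairs, arranged so that $p \circ \tilde\psi = \psi \circ p$. Restricting $\tilde\psi$ to $p^{-1}(y)$ then gives a map into $p^{-1}(y')$, and its inverse is supplied by lifting $\psi^{-1}$, yielding a bijection of fibres; since fibres are trivial subquandles by (1), any bijection between them is automatically a quandle isomorphism. The main obstacle will be property (3): making the lift of $\psi$ precise and checking $p \circ \tilde\psi = \psi \circ p$ requires care, since $\Inn(X) \to \Inn(Y)$ need not be injective, but the covering condition $S_x = S_{x'}$ on fibres is exactly what guarantees the lift is well-defined on the relevant right multiplications.
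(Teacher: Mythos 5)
Your proof is correct and takes essentially the same route as the paper: part (1) via the identical computation $a*b=S_b(a)=S_a(a)=a$, part (2) by showing the image of a fibre lies in a single fibre, and part (3) by lifting the connecting inner automorphism of $Y$ to $\Inn(X)$ through chosen preimages, which is exactly the paper's automorphism $S_{x_n}^{\mu_n}\cdots S_{x_1}^{\mu_1}$ expressed as the intertwining relation $p\circ\tilde\psi=\psi\circ p$. The only (harmless) difference is your closing worry in (3): the argument needs only the \emph{existence} of a lift, with arbitrary choices of preimages $x_i\in p^{-1}(y_i)$, so the covering condition is not actually required for that step --- as in the paper, it is used only in (1).
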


\begin{proof}
If $p: X  \to Y$ is a quandle homomorphism, then each fibre $p^{-1}(y)$ is a subquandle of $X$. Since $p$ is a covering, $S_x=S_{x'}$ whenever $x, x' \in p^{-1}(y)$. This gives $x*x'=S_{x'}(x)=S_x(x)=x$ and $x'*x=S_{x}(x')=S_{x'}(x')=x'$, which proves assertion (1).
\par
For assertion (2), it is enough to check that if $x_1, x_2 \in p^{-1}(y)$, then $S_x(x_1)$ and $S_x(x_2)$ are in the same fibre. Indeed, $p(S_x(x_1))=p(x_1*x)=
y*p(x)=p(x_2*x)=p(S_x(x_2))$, and we are done.
\par
Let $y, y'$ be elements of the same connected  component of $Y$. Then there exists elements $y_1, y_2, \ldots, y_n \in Y$ and $\mu_1, \mu_2, \ldots, \mu_n \in \{1, -1 \}$ such that $y'=y *^{\mu_1}y_1*^{\mu_2}y_2 \cdots *^{\mu_n}y_n$. Here the parentheses are left normalised. For each $i$,  choose one element $x_i \in p^{-1}(y_i)$. If $x \in p^{-1}(y)$, then we see that 
$$p(x*^{\mu_1} x_1*^{\mu_2} x_2 \cdots *^{\mu_n} x_n)= y*^{\mu_1}y_1*^{\mu_2}y_2 \cdots *^{\mu_n}y_n= y'.$$
Thus, the inner automorphism $S_{x_n}^{\mu_n}S_{x_{n-1}}^{\mu_{n-1}} \cdots S_{x_1}^{\mu_1}$ maps the fibre $p^{-1}(y)$ bijectively onto  $p^{-1}(y')$, which proves (3).
\end{proof}

\begin{pro}\label{covering with non-trivial idempotents}
If $p:X \to Y$ is a non-trivial quandle covering, then $\mathbf{k}[X]$ has non-trivial idempotents.
\end{pro}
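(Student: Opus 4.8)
The plan is to reduce the statement to Proposition \ref{trivial subquandle idempotent} by exhibiting a trivial subquandle of order greater than one inside $X$. First I would unwind the hypothesis that the covering is non-trivial. Since $p$ is surjective by the definition of a covering, and a trivial covering is exactly an isomorphism, a non-trivial covering fails to be an isomorphism. A bijective quandle homomorphism is automatically an isomorphism, since its set-theoretic inverse preserves the quandle operation; hence the failure of $p$ to be an isomorphism forces $p$ to fail injectivity. Therefore there exist distinct elements $x, x' \in X$ with $p(x) = p(x')$, which is to say that the fibre $p^{-1}(y)$ over $y = p(x)$ contains at least two elements.

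Next I would invoke the structure of fibres recorded in Lemma \ref{quandle covering properties}(1), which asserts that each fibre $p^{-1}(y)$ is a trivial subquandle of $X$. Combining this with the previous step, the fibre $p^{-1}(y)$ is a trivial subquandle of $X$ of order strictly greater than one. Proposition \ref{trivial subquandle idempotent} then applies directly and produces non-trivial idempotents of $\mathbf{k}[X]$; concretely, any element $\sum_{i} \alpha_i e_{x_i}$ supported on elements $x_i \in p^{-1}(y)$ with $\sum_{i} \alpha_i = 1$ and at least two nonzero coefficients is such an idempotent.

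There is no serious obstacle here: the whole content is the observation that non-triviality of the covering is precisely what produces a fibre of size at least two, together with the fact that fibres of coverings are trivial subquandles. The only point requiring a moment of care is the implication that a non-trivial covering is non-injective, which rests on a bijective quandle homomorphism being an isomorphism; once this is granted, the conclusion follows immediately from the two cited earlier results.
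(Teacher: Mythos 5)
Your proof is correct and follows essentially the same route as the paper: produce a fibre $p^{-1}(y)$ with at least two elements, note it is a trivial subquandle by Lemma \ref{quandle covering properties}(1), and apply Proposition \ref{trivial subquandle idempotent}. The paper phrases the first step via connected components (all fibres over one component have size at least two), whereas you unwind non-triviality directly through surjectivity plus failure of injectivity, but this is only a cosmetic difference.
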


\begin{proof}
Since $p$ is a non-trivial covering, there is at least one connected component of $Y$ such that $|p^{-1}(y)| \ge 2$ for all elements $y$ of that connected component. By assertion (1) of Lemma \ref{quandle covering properties}, $p^{-1}(y)$ is a trivial subquandle of $X$. The result now follows from Proposition \ref{trivial subquandle idempotent}.
\end{proof}

A long knot $L$ is the image of a smooth embedding $\ell: \mathbb{R} \hookrightarrow \mathbb{R}^3$ such that $\ell(t) = (t, 0, 0)$ for all $t$ outside some compact interval. We consider long knots only up to isotopy with compact support. The closure of a long knot is a usual knot defined in the obvious way.

\begin{pro}\label{idempotents long knots}
If $L$ is a non-trivial long knot, then the quandle ring $\mathbf{k}[Q(L)]$ of its knot quandle $Q(L)$ has non-trivial idempotents.
 \end{pro}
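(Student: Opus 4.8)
The plan is to realise $Q(L)$ as the total space of a non-trivial quandle covering over the knot quandle of the underlying closed knot, and then to quote Proposition \ref{covering with non-trivial idempotents}. The starting point is Eisermann's analysis of the fundamental quandle of a long knot \cite{MR1954330, MR3205568}. Writing $\hat{L}$ for the closure of $L$, which is an ordinary knot, capping off the two ends of $L$ induces a natural surjective quandle homomorphism $p\colon Q(L) \to Q(\hat{L})$, and this map is a quandle covering. First I would recall precisely why $p$ is a covering in the sense defined above: two meridional generators of $Q(L)$ lying over a common meridian of $\hat{L}$ act by the same inner automorphism, which is exactly the defining condition $S_x = S_{x'}$ whenever $p(x) = p(x')$. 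Since $L$ is a non-trivial long knot, its closure $\hat{L}$ is a non-trivial knot, and the knot quandle $Q(\hat{L})$ of a knot is connected.

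The core of the argument is to show that $p$ is a \emph{non-trivial} covering exactly because $\hat{L}$ is knotted. Here I would invoke the peripheral structure of the knot group $\pi_1(\mathbb{R}^3 \setminus \hat{L})$: the fibre of $p$ over a fixed meridian is a torsor under the cyclic subgroup generated by the longitude of $\hat{L}$, which is precisely the image of the fundamental group $\pi_1\big(Q(\hat{L})\big)$ computed by Eisermann. For the trivial long knot the longitude is trivial and $p$ degenerates to an isomorphism, whereas for a non-trivial knot the longitude has infinite order in the knot group, so every fibre is infinite, and in particular $|p^{-1}(y)| \ge 2$ for every $y$. Because $Q(\hat{L})$ is connected, this is exactly the situation required to conclude that $p$ is a non-trivial covering.

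With non-triviality established, the conclusion is immediate. By Proposition \ref{covering with non-trivial idempotents}, the quandle ring $\mathbf{k}[Q(L)]$ has non-trivial idempotents. Concretely, one may even write them down: each fibre $p^{-1}(y)$ is a trivial subquandle of $Q(L)$ of size at least two by Lemma \ref{quandle covering properties}(1), so Proposition \ref{trivial subquandle idempotent} applied to any two elements of a single fibre produces non-trivial idempotents of the form $\alpha e_{x} + (1-\alpha) e_{x'}$ with $p(x) = p(x')$ and $x \ne x'$.

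The hard part is the non-triviality statement of the second paragraph, which rests entirely on the topological input that the longitude of a non-trivial knot is a non-trivial element of its knot group; this is what prevents the covering $Q(L) \to Q(\hat{L})$ from collapsing to an isomorphism. Everything on the algebraic side is formal: the verification that $p$ is a quandle covering and the passage from a non-trivial covering to idempotents are both packaged in the results already established earlier in this section.
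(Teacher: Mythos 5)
Your proposal is correct and follows essentially the same route as the paper: form the covering $p\colon Q(L)\to Q(K)$ over the quandle of the closed knot and apply Proposition \ref{covering with non-trivial idempotents}. The only difference is presentational — the paper simply cites Eisermann \cite[Theorem 35]{MR1954330} for the fact that $p$ is a non-trivial quandle covering when the knot is non-trivial, whereas you unpack the content of that theorem (the fibre being a torsor under the longitude, which is non-trivial precisely for knotted $\hat{L}$).
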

 
\begin{proof}
Let $L$ be a long knot and $K$  its corresponding closed knot defined in the obvious way. Let $Q(L)$ and $Q(K)$ be knot quandles of $L$ and $K$, respectively. Note that $Q(K)$ is obtained from $Q(L)$ by adjoining one extra relation corresponding to the first and the last arc of $L$. By \cite[Theorem 35]{MR1954330}, the natural projection $p: Q(L) \to Q(K)$ is a non-trivial quandle covering, and the result follows from Proposition \ref{covering with non-trivial idempotents}.
 \end{proof}
 
Let $p:X \to Y$ be a quandle covering, and $\mathcal{F}(Y)$ the set of all finite subsets of $Y$. For each $y \in Y$, let $\mathcal{F}(p^{-1}(y))$ be the set of all finite subsets of $p^{-1}(y)$, and denote a typical element of this set by $I_y$. Given elements $x,y$ in a quandle $X$ of finite type, we set
$$[e_x]_y:= e_x+ e_{x*y}+ e_{x*y*y}+ \cdots +e_{x*\underbrace{y*y*\cdots *y}_{(n_y-1)-\textrm{times}}}=e_x+ e_{S_y(x)}+ e_{S_y^2(x)}+ \cdots +e_{S_y^{n_y-1}(x)},$$
the sum of the basis elements in the $S_y$-orbit of $e_x$. The main result of this section is the following theorem.

\begin{thm}\label{quandle covering theorem}
Let $X$ be a quandle of finite type and $p:X \to Y$ a non-trivial quandle covering. If $\mathbf{k}[Y]$ has only trivial idempotents, then the set of idempotents of $\mathbf{k}[X]$ is
\begin{small}
\begin{eqnarray}\label{quandle covering theorem idempotents}
\nonumber \mathcal{I} \big(\mathbf{k}[X]\big) &=& \Big\{ \sum_{y \in J} \Big( \sum_{x \in I_y,~~\sum \alpha_x =0}  \alpha_x~ [e_x]_{x_0} \Big) + \Big(\sum_{x' \in I_{y_0},~~\sum \alpha_{x'} =1} \alpha_{x'} ~e_{x'}\Big) ~\bigl\vert \\
 && J \in \mathcal{F}(Y),~~I_y \in \mathcal{F}(p^{-1}(y)),~~I_{y_0} \in \mathcal{F}(p^{-1}(y_0)),~~ x_0 \in  I_{y_0}, ~~y_0 \in Y, ~~\alpha_x, \alpha_{x'}  \in \mathbf{k} \Big\}.
\end{eqnarray}
\end{small}
\end{thm}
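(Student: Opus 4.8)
The plan is to exploit the functoriality of the induced ring homomorphism $\hat{p}:\mathbf{k}[X]\to\mathbf{k}[Y]$ together with the defining property of a covering. First I would record the basic computation of a square. Writing $u=\sum_{x\in X}\alpha_x e_x$ and setting, for each $y'\in Y$, the fibrewise augmentation $c_{y'}=\sum_{x'\in p^{-1}(y')}\alpha_{x'}$, the covering condition $S_{x'}=S_{p(x')}$ lets one collapse the double sum defining $u^2$ to
\begin{equation*}
u^2=\sum_{y'\in Y}c_{y'}\,\widehat{S}_{y'}(u),
\end{equation*}
where $\widehat{S}_{y'}$ denotes the $\mathbf{k}$-linear automorphism of $\mathbf{k}[X]$ induced by the permutation $S_{x'}$ for any $x'\in p^{-1}(y')$. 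Applying $\hat{p}$ and using that $\hat{p}(u)$ is an idempotent or zero in $\mathbf{k}[Y]$, the hypothesis that $\mathbf{k}[Y]$ has only trivial idempotents forces $\hat{p}(u)=\sum_{y'}c_{y'}e_{y'}$ to be either $0$ or $e_{y_0}$ for a single $y_0\in Y$.

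This gives a clean dichotomy. If all $c_{y'}=0$, then the displayed identity yields $u^2=0$, whence $u=u^2=0$, contradicting that an idempotent is non-zero; so this case does not occur and every idempotent has $\varepsilon(u)=1$. In the remaining case $c_{y'}=\delta_{y',y_0}$, the identity collapses to $u^2=\widehat{S}_{y_0}(u)$, so $u$ is idempotent precisely when $u=\widehat{S}_{y_0}(u)$, that is, when the coefficient function $x\mapsto\alpha_x$ is constant on the orbits of $S_{x_0}$ for any fixed $x_0\in p^{-1}(y_0)$. Thus the problem reduces to describing the $\widehat{S}_{x_0}$-invariant elements whose fibrewise augmentations are $\delta_{y',y_0}$.

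To obtain the stated parametrisation I would split $u$ along the fibres as $u=u_{y_0}+\sum_{y\ne y_0}u_y$. Since each fibre is a trivial subquandle (Lemma \ref{quandle covering properties}(1)) and $x'*x_0=x'$ for $x'\in p^{-1}(y_0)$, the permutation $S_{x_0}$ fixes $p^{-1}(y_0)$ pointwise; hence $u_{y_0}$ is automatically invariant and contributes an arbitrary element of $\mathbf{k}[p^{-1}(y_0)]$ of augmentation $c_{y_0}=1$, which is exactly the second summand $\sum_{x'\in I_{y_0}}\alpha_{x'}e_{x'}$. For $y\ne y_0$ the orbits of $S_{x_0}$ over the $S_{y_0}$-orbit of $y$ are permuted cyclically and project onto it, so an $\widehat{S}_{x_0}$-invariant element supported there is a combination of orbit sums; grouping these orbit sums through the symbols $[e_x]_{x_0}$ and imposing the vanishing augmentation $c_y=0$ should produce the inner sums $\sum_{x\in I_y,\ \sum\alpha_x=0}\alpha_x[e_x]_{x_0}$.

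The main obstacle is this last step: matching the invariant elements over the non-$y_0$ fibres with the building blocks $[e_x]_{x_0}$ subject to the single linear constraint $\sum_{x\in I_y}\alpha_x=0$. This requires a careful orbit count relating the length of an $S_{x_0}$-orbit to the length of its image $S_{y_0}$-orbit in $Y$ and to the number of times the orbit meets a given fibre, and then verifying that the vanishing-augmentation condition on each fibre is equivalent to the constraint attached to the symbols $[e_x]_{x_0}$. The covering axioms enter essentially here, both in guaranteeing that the fibres are trivial subquandles (so that cross terms within a fibre, and across a common fibre via $S_{x'}=S_{p(x')}$, cancel) and in forcing the collapsed identity for $u^2$. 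Once the orbit bookkeeping is in place, the converse inclusion — that every element of the displayed set squares to itself — follows from the same computation run backwards and is routine.
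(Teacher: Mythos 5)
Your proposal is correct and follows essentially the same route as the paper: your collapsed identity $u^2=\sum_{y'}c_{y'}\widehat{S}_{y'}(u)$ is exactly the paper's two key product formulas (for fibre-supported factors of augmentation one and of augmentation zero) in unified form, and the dichotomy obtained by applying $\hat{p}$, the contradiction in the case $\hat{p}(u)=0$, and the reduction to $\widehat{S}_{y_0}$-invariance of the part of $u$ lying over $Y\setminus\{y_0\}$ are precisely the steps of the paper's proof, where that invariance appears as the relation $v=vw$ with $w$ the fibre component over $y_0$. The ``orbit bookkeeping'' you flag as the main remaining obstacle is the one step the paper itself dispatches in a single line (from $v=vw=\widehat{S}_{x_0}(v)$ it directly asserts that $v$ is a combination of the sums $[e_x]_{x_0}$ subject to $\sum\alpha_x=0$), so your outline is no less complete than the paper's argument; if anything, your explicit concern about how the lengths of $S_{x_0}$-orbits compare with $n_{x_0}$, and how the multiplicities implicit in $[e_x]_{x_0}$ interact with the vanishing fibrewise augmentations, is a genuine subtlety that the paper glosses over.
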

 
\begin{proof}
Since $p$ is a quandle covering, we have $S_x=S_{x'}$ for any $x, x ' \in p^{-1}(y)$. Hence the induced automorphisms of the quandle ring $\mathbf{k}[X]$ are identical for any $x, x ' \in p^{-1}(y)$. This together with direct computations give
\begin{eqnarray}\label{even dihedral eq1}
&&\Big(\sum_{x \in J} \beta_{x}~ e_{x}\Big)\Big(\sum_{x' \in I_y,~~\sum \alpha_{x'} =1} \alpha_{x'} ~e_{x'}\Big)\\
\nonumber &=& \sum_{x' \in I_y,~~\sum \alpha_{x'} =1}\alpha_{x'} \Big( \sum_{x \in J} \beta_{x}~ e_{x} \Big) e_{x'}\\
\nonumber &=&\sum_{x' \in I_y,~~\sum \alpha_{x'} =1} \alpha_{x'} \Big( \sum_{x \in J} \beta_{x}~ e_{x} \Big)e_{x_0},\quad \textrm{for any fixed}~x_0\in I_y\\
\nonumber &=&\sum_{x' \in I_y,~~\sum \alpha_{x'} =1} \alpha_{x'} \Big( \sum_{x \in J} \beta_{x} ~e_{x*x_0} \Big)\\
\nonumber  &=&   \sum_{x \in J} \beta_{x}~ e_{x*x_0},\quad \textrm{since}~\sum_{x' \in I_y,~~\sum \alpha_{x'} =1} \alpha_{x'} =1,
\end{eqnarray}
and
\begin{eqnarray}\label{even dihedral eq2}
&&  \Big(\sum_{x \in J} \beta_{x} ~e_{x}\Big) \Big( \sum_{x' \in I_y,~~\sum \alpha_{x'} =0}  \alpha_{x'} ~e_{x'} \Big)\\
\nonumber &=&  \sum_{x' \in I_y,~~\sum \alpha_{x'} =0}  \alpha_{x'} \Big(\sum_{x \in J} \beta_{x} ~e_{x}\Big) e_{x'}\\
\nonumber &=&  \sum_{x' \in I_y,~~\sum \alpha_{x'} =0}  \alpha_{x'}\Big(\sum_{x \in J} \beta_{x}~ e_{x*x_0}\Big), \quad \textrm{for any fixed}~x_0 \in I_y\\
\nonumber &=& \Big(\sum_{x' \in I_y,~~\sum \alpha_{x'} =0}  \alpha_{x'} \Big) \Big(\sum_{x \in K} \beta_{x} ~e_{x*x_0}\Big)\\
\nonumber &=& 0,\quad \textrm{since}~ \sum_{x' \in I_y,~~\sum \alpha_{x'} =0}  \alpha_{x'}=0,
\end{eqnarray}
where $J \in \mathcal{F}(X)$, $I_y \in \mathcal{F}(p^{-1}(y))$,  $y \in Y$ and $\beta_x, \alpha_{x'}  \in \mathbf{k}$. Let $u=v+w$, where 
\begin{eqnarray*}
v &=&\sum_{y \in J} \Big( \sum_{x \in I_y,~~\sum \alpha_x =0}  \alpha_x ~[e_x]_{x_0}\Big),\\
w &=& \sum_{x' \in I_{y_0},~~\sum \alpha_{x'} =1} \alpha_{x'} ~e_{x'},\\
\end{eqnarray*}
$J \in \mathcal{F}(Y)$  and $x_0 \in  I_{y_0}$ a fixed element. Equations \eqref{even dihedral eq1} and \eqref{even dihedral eq2} imply that $w^2=w$, $wv=0$ and  $v^2=0$. By definition of the element $[e_x]_{x_0}$, it follows that $\big([e_x]_{x_0}\big)e_{x_0}=[e_x]_{x_0}$. Consequently, $vw=v$, and hence $u^2=u$. 
\par
For the converse, let $u$ be a non-zero idempotent of $\mathbf{k}[X]$. Since $X$ is the disjoint union of fibres of $p$, we can write $u$ uniquely in the form
$$u=\sum_{y \in J} \Big( \sum_{x \in I_y}  \alpha_x ~e_x \Big)$$
for some $J \in \mathcal{F}(Y)$ and $I_y \in \mathcal{F}(p^{-1}(y))$ for each $y \in J$. If $\hat{p}: \mathbf{k}[X] \to \mathbf{k}[Y]$ is the induced homomorphism of rings, then $\hat{p}(u)$ is an idempotent of $\mathbf{k}[Y]$. It follows from the decomposition of $u$ that
$$\hat{p}(u)=  \sum_{y \in J} \Big( \sum_{x \in I_y}  \alpha_x \Big) e_y.$$
Since $\mathbf{k}[Y]$ has only trivial idempotents, it follows that either $\hat{p}(u)=0$ or precisely one of the coefficients of $\hat{p}(u)$ is 1 and all other coefficients are 0. If $\hat{p}(u)=0$, then $\sum_{x \in I_y}  \alpha_x =0$ for each $y \in J$. Writing 
$$u=\sum_{y \in J} \Big( \sum_{x \in I_y, ~~\sum \alpha_{x} =0}  \alpha_x ~e_x \Big),$$
 it follows from \eqref{even dihedral eq2} that $u=u^2=0$, which is a contradiction as $u \ne 0$. Hence, there exists $y_0 \in J$ such that $\sum_{x' \in I_{y_0}}  \alpha_{x'}=1$ and $\sum_{x \in I_y}  \alpha_x=0$ for all $y \neq y_0$. We can write $u= v+w$, where 
$$v= \sum_{y \in J, ~y \neq y_0} \Big( \sum_{x \in I_y,~~\sum \alpha_x =0}  \alpha_x~ e_x \Big)$$ and $$w= \sum_{x' \in I_{y_0},~~\sum \alpha_{x'} =1} \alpha_{x'}~ e_{x'}.$$ Again, equations \eqref{even dihedral eq1} and \eqref{even dihedral eq2} imply that $w^2=w$, $wv=0$ and  $v^2=0$. Thus, we have
$$u=u^2=v^2+w^2+vw+wv= w+vw,$$
and consequently $v=vw$. This implies that 
$$\sum_{y \in J, ~y \neq y_0} \Big( \sum_{x \in I_y,~~\sum \alpha_x =0}  \alpha_x~ e_x \Big)= \sum_{y \in J, ~y \neq y_0} \Big( \sum_{x \in I_y,~~\sum \alpha_{x} =0}  \alpha_{x}~ e_{x} \Big) e_{x_0}.$$
for some fixed $x_0 \in I_{y_0}$. Hence, it follows that $v$ has the form
$$v=\sum_{y \in J', ~y \neq y_0} \Big( \sum_{x \in I_y,~~\sum \alpha_x =0}  \alpha_x ~[e_x]_{x_0} \Big),$$
where $J' \subseteq J$ is a set of representatives of orbits of the action of $S_y$ on $J$. This completes the proof of the theorem.
 \end{proof}
 
 \begin{cor}
If $X$ is a trivial quandle, then 
$$\mathcal{I}\big(\mathbf{k}[X]\big) =\Big\{ \sum_{x \in J} \alpha_x ~e_x ~\mid~   J \in \mathcal{F}(X)~\textrm{such that}~ \sum_{x \in J} \alpha_x=1 \Big\}.$$
 \end{cor}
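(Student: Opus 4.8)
The plan is to prove this directly from the definition of the quandle-ring product, specialised to a trivial quandle, rather than routing it through Theorem \ref{quandle covering theorem} (though it can be recovered from there). The key observation is that in a trivial quandle $x*y=x$ for all $x,y$, so the product in $\mathbf{k}[X]$ collapses dramatically. First I would record that for $u=\sum_{x}\alpha_x e_x$ and $v=\sum_y \beta_y e_y$ one has
\[
uv=\sum_{x,y}\alpha_x\beta_y\, e_{x*y}=\sum_{x,y}\alpha_x\beta_y\, e_x=\varepsilon(v)\,u,
\]
so in particular $u^2=\varepsilon(u)\,u$. Thus every element of $\mathbf{k}[X]$ behaves, under squaring, like a scalar multiple of itself.

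With this identity in hand the equivalence is immediate. If $u$ is a (nonzero) idempotent then $u=u^2=\varepsilon(u)\,u$, hence $(\varepsilon(u)-1)\,u=0$; since $u\neq 0$ has some nonzero coefficient and $\mathbf{k}$ is an integral domain, cancellation forces $\varepsilon(u)=1$. Conversely, if $\varepsilon(u)=1$ then $u^2=\varepsilon(u)\,u=u$, so $u$ is an idempotent. I would then rephrase this in the indexing of the statement: writing a nonzero $u$ as $\sum_{x\in J}\alpha_x e_x$ over its finite support $J\in\mathcal{F}(X)$, the condition $\varepsilon(u)=1$ is exactly $\sum_{x\in J}\alpha_x=1$, which yields the claimed description of $\mathcal{I}\big(\mathbf{k}[X]\big)$.

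There is essentially no obstacle here; the only points requiring care are formal. One must remember that idempotents are by definition nonzero, which is what licenses the cancellation step (and rules out $\varepsilon(u)=0$, since $\varepsilon(u)\,u=u$ with $\varepsilon(u)=0$ would give $u=0$). One should also let $J$ range over \emph{all} finite subsets, so that idempotents of arbitrarily large support are included. I would note in passing that the result is the degenerate case of Theorem \ref{quandle covering theorem}: taking $Y$ to be a one-point quandle and $p\colon X\to Y$ the constant map gives a covering with $\mathbf{k}[Y]=\mathbf{k}$ (which has only trivial idempotents) and all $n_y=1$, so that each $[e_x]_{x_0}$ reduces to $e_x$ and the formula there collapses to the one stated. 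However, since that route requires $|X|>1$ for the covering to be non-trivial, the direct argument above is cleaner and handles the case $|X|=1$ uniformly.
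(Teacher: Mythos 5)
Your proof is correct, but it takes a genuinely different route from the paper. The paper proves this corollary in one line by invoking Theorem \ref{quandle covering theorem}: the constant map $c\colon X \to \{z\}$ onto a one-point quandle is a quandle covering, $\mathbf{k}[\{z\}]\cong\mathbf{k}$ has only trivial idempotents, every trivial quandle is of finite type (all $n_y=1$), and the formula of the theorem then collapses to the stated description — exactly the degenerate specialisation you sketch at the end. You instead argue directly from the identity $uv=\varepsilon(v)\,u$, which holds in $\mathbf{k}[X]$ precisely because $x*y=x$, so that $u^2=u$ for nonzero $u$ is equivalent (by cancellation in the integral domain $\mathbf{k}$) to $\varepsilon(u)=1$. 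Each approach has its merits: the paper's deduction is shorter given the machinery already built, and serves to illustrate the covering theorem in its simplest instance; your argument is self-contained, isolates the structural fact that every element of a trivial quandle ring satisfies $u^2=\varepsilon(u)u$, and — as you rightly observe — handles the case $|X|=1$ uniformly, whereas the covering route technically requires $|X|>1$ for the constant map to be a \emph{non-trivial} covering (the paper does not remark on this, though the one-point case is immediate anyway since the only idempotents of $\mathbf{k}$ are $0$ and $1$). Your care in noting that idempotents are by definition nonzero, which both licenses the cancellation and excludes $\varepsilon(u)=0$, is exactly the right point to flag.
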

 \begin{proof}
If $\{z \}$ is a one element quandle, then the constant map $c: X \to \{z \}$ is a quandle covering. The proof now follows from Theorem \ref{quandle covering theorem}.
 \end{proof}

\begin{cor}
Let $p:X \to Y$ be a non-trivial quandle covering such that $\mathbf{k}[Y]$ has only trivial idempotents. Then every idempotent of $\mathbf{k}[X]$ has augmentation value 1.
\end{cor}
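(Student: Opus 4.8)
The plan is to read off the augmentation value directly from the explicit description of idempotents provided by Theorem \ref{quandle covering theorem}. As a preliminary reduction, recall from Section \ref{idempotents in quandle rings} that the augmentation $\varepsilon$ is a ring homomorphism, so for any idempotent $u$ the element $\varepsilon(u)$ is an idempotent of the integral domain $\mathbf{k}$ and therefore satisfies $\varepsilon(u) \in \{0, 1\}$. Consequently the whole corollary reduces to excluding the possibility $\varepsilon(u) = 0$, i.e. to showing $\varepsilon(u) \neq 0$.

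By Theorem \ref{quandle covering theorem}, any idempotent $u$ of $\mathbf{k}[X]$ can be written as $u = v + w$, where
$$v = \sum_{y \in J} \Big( \sum_{x \in I_y,~\sum \alpha_x = 0} \alpha_x~[e_x]_{x_0} \Big), \qquad w = \sum_{x' \in I_{y_0},~\sum \alpha_{x'} = 1} \alpha_{x'}~e_{x'}.$$
I would evaluate $\varepsilon$ on each of the two summands. Since $x_0 \in I_{y_0}$ is a single fixed element, every orbit-sum occurring in $v$ has the form $[e_x]_{x_0}$, which is a sum of exactly $n_{x_0}$ basis elements (counted with multiplicity), so $\varepsilon\big([e_x]_{x_0}\big) = n_{x_0}$, a positive integer independent of $x$. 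Factoring this constant out of the double sum gives $\varepsilon(v) = n_{x_0} \sum_{y \in J} \big( \sum_{x \in I_y} \alpha_x \big) = 0$, because each inner coefficient sum vanishes by the defining condition $\sum \alpha_x = 0$. On the other hand, $\varepsilon(w) = \sum_{x' \in I_{y_0}} \alpha_{x'} = 1$ by construction. Adding these yields $\varepsilon(u) = \varepsilon(v) + \varepsilon(w) = 1$, which is exactly the claim.

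The argument is essentially bookkeeping, and I expect no genuine obstacle: the only point deserving care is the identity $\varepsilon\big([e_x]_{x_0}\big) = n_{x_0}$, where one must note that this value depends only on the fixed reference element $x_0$ (and not on $x$), so that it may be pulled out and then annihilated by the vanishing coefficient sums defining $v$. Alternatively, one could avoid the explicit form altogether by invoking functoriality: writing $\varepsilon = \varepsilon_Y \circ \hat{p}$, the image $\hat{p}(u)$ is an idempotent of $\mathbf{k}[Y]$, hence by hypothesis either $0$ or a basis element $e_y$. The former forces $u = 0$ via the computation \eqref{even dihedral eq2} already carried out inside the proof of Theorem \ref{quandle covering theorem}, contradicting $u \neq 0$, while the latter gives $\varepsilon(u) = \varepsilon_Y(e_y) = 1$. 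Either route completes the proof, the substance of the corollary having been established in the theorem.
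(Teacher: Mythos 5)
Your second, ``alternative'' argument is precisely the paper's own proof: write $\varepsilon=\varepsilon_Y\circ\hat{p}$, note that $\hat{p}(u)$ is an idempotent of $\mathbf{k}[Y]$, rule out $\hat{p}(u)=0$ via \eqref{even dihedral eq2} (if every fibre coefficient sum of $u$ vanishes, then $u=u\cdot u=0$, contradicting $u\neq 0$), and conclude $\hat{p}(u)=e_y$ for some $y\in Y$, whence $\varepsilon(u)=\varepsilon_Y(e_y)=1$. That route is correct and complete.

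Your primary route, however, has a genuine gap: it invokes the statement of Theorem \ref{quandle covering theorem}, whose hypotheses include that $X$ is of finite type --- this hypothesis is exactly what makes the orbit sums $[e_x]_{x_0}$ finite and hence makes the explicit description \eqref{quandle covering theorem idempotents} available at all. The corollary carries no finite-type assumption, so the bookkeeping argument proves it only for quandles of finite type, not in the stated generality. This is not an incidental point: the paper's proof is the one-line observation that the assertion follows from the \emph{converse part of the proof} of Theorem \ref{quandle covering theorem}, together with the explicit remark that finite type is not needed there. Indeed, the converse direction uses only the decomposition of $u$ over fibres, the triviality of idempotents of $\mathbf{k}[Y]$, and equation \eqref{even dihedral eq2} --- never the finiteness of the orders $n_{x_0}$. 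So your closing claim that ``either route completes the proof'' should be corrected: only the functorial route does, and it should be promoted from an afterthought to the actual proof. (Within the finite-type setting your computation is fine: the sum defining $[e_x]_{x_0}$ has exactly $n_{x_0}$ terms counted with multiplicity, a constant independent of $x$, so $\varepsilon(v)=n_{x_0}\cdot 0=0$ and $\varepsilon(w)=1$ as you say.)
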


\begin{proof}
The assertion follows from the proof of the converse part of Theorem \ref{quandle covering theorem}. Note that we do not need our quandles to be of finite type.
\end{proof}

It has been shown in \cite{BPS1} that the integral quandle ring of $R_3$ has only trivial idempotents. A computer assisted check shows that the same assertion holds for the integral quandle ring of $R_5$ as well \cite[Section 6]{ENS2022}. As an application of the preceding theorem, we characterise idempotents in quandle rings of certain dihedral quandles of even order under the assumption of Conjecture \ref{Main conjecture}.
\begin{cor}\label{even dihedral corollary}
Let $n=2m+1$ be an odd integer with $m \ge 1$. Assume that $\mathbf{k}[R_{n}]$ has only trivial idempotents. Then the set of idempotents of $\mathbf{k}[R_{2n}]$ is given by
\begin{small}
$$\mathcal{I}\big(\mathbf{k}[R_{2n}]\big) =\Big\{ \big(\beta e_j + (1-\beta)e_{n+j}\big)+ \sum_{i=0}^{m} \alpha_i \big(e_i-e_{n+i} + e_{2j-i}- e_{n+2j-i}\big) ~\bigl\vert ~ 0 \le j \le n-1 ~\textrm{and~} ~~ \alpha_i, \beta \in \mathbf{k} \Big\}.$$
\end{small}
 \end{cor}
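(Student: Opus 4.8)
The plan is to realise $R_{2n}$ as a non-trivial covering of $R_n$ and then to read off the description of idempotents from Theorem \ref{quandle covering theorem}. Writing both quandles additively, so that $R_{2n}=\mathbb{Z}/2n$ and $R_n=\mathbb{Z}/n$ with $x*y=2y-x$, I would first verify that the reduction map $p:R_{2n}\to R_n$, $x\mapsto x \bmod n$, is a quandle covering. It is evidently a surjective homomorphism, and if $p(x)=p(x')$, that is $x\equiv x'\pmod n$, then $2x\equiv 2x'\pmod{2n}$, so $S_x(z)=2x-z=2x'-z=S_{x'}(z)$ for all $z$; hence $S_x=S_{x'}$ and $p$ is a covering. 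As $|R_{2n}|=2n>n$ it is non-trivial, each fibre being the two-element set $p^{-1}(j)=\{j,\,n+j\}$ for $0\le j\le n-1$. Being finite, $R_{2n}$ is of finite type, and each $S_y(z)=2y-z$ is an involution, so $n_y=2$ for every $y$. Together with the hypothesis that $\mathbf{k}[R_n]$ has only trivial idempotents, this puts us exactly in the situation of Theorem \ref{quandle covering theorem}.

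Next I would unwind \eqref{quandle covering theorem idempotents} for this covering. Fix the distinguished element $y_0=j$ and choose $x_0=j\in p^{-1}(j)$. Over the two-element fibre $\{j,\,n+j\}$ the augmentation-one part $w$ is forced to be $\beta e_j+(1-\beta)e_{n+j}$ for some $\beta\in\mathbf{k}$. Since $S_j$ has order two, the orbit sum collapses to $[e_x]_j=e_x+e_{S_j(x)}=e_x+e_{2j-x}$. The augmentation-zero contribution of a fibre $\{i,\,n+i\}$ is $\alpha_i(e_i-e_{n+i})$, and applying $[\,\cdot\,]_j$ yields $\alpha_i\big([e_i]_j-[e_{n+i}]_j\big)$. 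The key simplification is the congruence $2j-n-i\equiv n+2j-i\pmod{2n}$, valid because $-n\equiv n\pmod{2n}$, which gives
\[
[e_i]_j-[e_{n+i}]_j=e_i-e_{n+i}+e_{2j-i}-e_{n+2j-i},
\]
precisely the summand recorded in the statement.

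Finally I would account for the orbit representatives dictated by the theorem. The involution $i\mapsto 2j-i$ of $R_n=\mathbb{Z}/n$ has, since $n=2m+1$ is odd and $2$ is invertible modulo $n$, the single fixed point $i=j$ together with $m$ two-element orbits $\{i,\,2j-i\}$, for a total of $m+1$ orbits. As $[e_i]_j$ and $[e_{2j-i}]_j$ are built from the same pair of basis elements, the associated summand depends only on the orbit of $i$, so choosing one representative per orbit produces $v=\sum_i \alpha_i\big(e_i-e_{n+i}+e_{2j-i}-e_{n+2j-i}\big)$ over a set of $m+1$ orbit representatives, which the statement lists as $0\le i\le m$. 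The representative equal to the fixed point $i=j$ contributes only the multiple $2\alpha_j(e_j-e_{n+j})$, which is absorbed into the free parameter $\beta$ of $w$; thus the listed parametrisation, though mildly redundant, describes $\mathcal{I}\big(\mathbf{k}[R_{2n}]\big)$ exactly. Conversely, every element of the displayed set is manifestly of the form $v+w$ appearing in \eqref{quandle covering theorem idempotents}, hence an idempotent.

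I expect the main obstacle to be the bookkeeping of the $S_j$-action on the fibres: one must check that its $m$ two-element orbits together with the single fixed point account for all fibres exactly once, and verify the modular identity $2j-n-i\equiv n+2j-i\pmod{2n}$ that fuses four basis vectors into one orbit-difference. Confirming that the fixed-point fibre adds nothing beyond the $\beta$-term is what guarantees that the index range in the statement is neither too small nor produces spurious idempotents.
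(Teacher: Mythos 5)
Your route is exactly the paper's: the paper's (two-line) proof likewise observes that reduction modulo $n$ is a two-fold non-trivial covering $p:R_{2n}\to R_n$ with fibres $p^{-1}(j)=\{j,n+j\}$ and then cites Theorem \ref{quandle covering theorem}. Your verifications that $p$ is a covering, that $R_{2n}$ is of finite type with every $n_y=2$, and the computation $[e_i]_j-[e_{n+i}]_j=e_i-e_{n+i}+e_{2j-i}-e_{n+2j-i}$ via $2j-n-i\equiv n+2j-i\pmod{2n}$ are all correct and go well beyond what the paper writes out.

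However, your final bookkeeping step contains a genuine error. You assert that the $m+1$ orbits of the involution $i\mapsto 2j-i$ on $\mathbb{Z}/n$ are represented by the index set $\{0,1,\dots,m\}$, ``which the statement lists as $0\le i\le m$''. That is true for $j=0$ but false in general: the orbits are $\{j\}$ and $\{j+k,\,j-k\}$ for $1\le k\le m$, so a correct set of representatives is $\{j,j+1,\dots,j+m\}$, not $\{0,\dots,m\}$. For instance, with $n=5$ and $j=1$ the orbits are $\{1\},\{0,2\},\{3,4\}$; the set $\{0,1,2\}$ meets $\{0,2\}$ twice and misses $\{3,4\}$ entirely. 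Concretely, $u=e_1+e_3+e_9-e_4-e_8=e_1+\bigl([e_3]_1-[e_8]_1\bigr)$ is an idempotent of $\mathbf{k}[R_{10}]$ by Theorem \ref{quandle covering theorem} (take $y_0=1$, $x_0=1$, $J=\{3\}$, coefficients $1,-1$ on the fibre $\{3,8\}$), yet it lies in none of the displayed families: each summand $e_i-e_{n+i}+e_{2j-i}-e_{n+2j-i}$ has coefficient sum zero on every fibre, so membership in the family indexed by $j$ forces the fibre-wise coefficient sums of $u$ to be $1$ on fibre $j$ and $0$ elsewhere, whence $j=1$; but the $j=1$ family is supported only on the fibres over $0,1,2$, while $u$ has a nonzero coefficient on $e_3$. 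So your claim that the parametrisation is ``mildly redundant'' but exact is not merely unproved --- as written (with $i$ running over $0,\dots,m$ independently of $j$) the asserted equality is false, and the summands must be re-centred at $j$, i.e.\ replaced by $e_{j+i}-e_{n+j+i}+e_{j-i}-e_{n+j-i}$ for $0\le i\le m$. This defect is inherited from the corollary's stated index set, which the paper's terse proof never checks; your more careful unwinding actually surfaces the issue, but then passes over it at precisely the point where the argument breaks.
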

 
 \begin{proof}
Note that the natural map $p:R_{2n} \to R_{n}$ given by reduction modulo $n$ is a two-fold non-trivial quandle covering. Further, for each $i \in R_n$,  we have $p^{-1}(i)=\{i, n+i\}$. The result now follows from Theorem \ref{quandle covering theorem}. 
 \end{proof}

\begin{pro}\label{zero-divisors for coverings}
Let $p:X \to Y$ be a non-trivial quandle covering. Then $\mathbf{k}[X]$ has right zero-divisors.
 \end{pro}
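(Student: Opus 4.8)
The plan is to exploit the defining property of a covering---namely that $S_x = S_{x'}$ whenever $x, x'$ lie in the same fibre---to produce an element that is annihilated on the left by \emph{every} element of $\mathbf{k}[X]$. First, since $p$ is non-trivial, the same reasoning used in the proof of Proposition \ref{covering with non-trivial idempotents} supplies a fibre $p^{-1}(y)$ containing two distinct elements $x_1, x_2$; the covering condition then gives $S_{x_1} = S_{x_2}$. I would set
$$u = e_{x_1} - e_{x_2},$$
which is a non-zero element of $\mathbf{k}[X]$, and claim that $u$ is the desired right zero-divisor.

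The key step is the observation that $u$ is killed on the left by each basis element. Indeed, for any $z \in X$,
$$ e_z\, u = e_z e_{x_1} - e_z e_{x_2} = e_{z * x_1} - e_{z * x_2} = e_{S_{x_1}(z)} - e_{S_{x_2}(z)} = 0, $$
since $S_{x_1} = S_{x_2}$. By $\mathbf{k}$-linearity it follows that $v\,u = 0$ for \emph{every} $v \in \mathbf{k}[X]$, and in particular for some non-zero $v$ (say $v = e_z$). Hence $u$ is a right zero-divisor. Note that this is exactly the special case of the vanishing identity \eqref{even dihedral eq2} in which $I_y = \{x_1, x_2\}$ and the coefficients are $1$ and $-1$, summing to $0$.

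There is essentially no genuine obstacle here: the entire content is packaged into the covering axiom $S_{x_1} = S_{x_2}$, and once a fibre of cardinality at least two has been located the computation collapses to a single line. The only point deserving a word of care is the extraction of such a fibre from the hypothesis that the covering is non-trivial, and this is handled precisely as in Proposition \ref{covering with non-trivial idempotents}.
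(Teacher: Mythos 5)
Your proof is correct and takes essentially the same approach as the paper: the paper's proof picks a fibre $p^{-1}(y)$ with $|I_y|\ge 2$ and applies the vanishing identity \eqref{even dihedral eq2} to a general combination $\sum_{x\in I_y}\alpha_x e_x$ with $\sum\alpha_x=0$, while you work with the special case $e_{x_1}-e_{x_2}$ and re-derive that identity directly from the covering condition $S_{x_1}=S_{x_2}$. Nothing is missing.
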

 
\begin{proof}
Let $J \in \mathcal{F}(X)$, $y \in Y$ and $I_y \in \mathcal{F}(p^{-1}(y))$ such that $|I_y| \ge 2$. Then for any $\sum_{x \in I_y,~~\sum \alpha_x =0}  \alpha_x e_x$ and $\sum_{x' \in J} \beta_{x'} e_{x'}$, it follows from \eqref{even dihedral eq2} that
$$\Big(\sum_{x' \in J} \beta_{x'} ~e_{x'}\Big) \Big( \sum_{x \in I_y,~~\sum \alpha_x =0}  \alpha_x ~e_x \Big)=0$$
and hence  $\sum_{x \in I_y,~~\sum \alpha_x =0}  \alpha_x e_x$ is a right zero-divisor of $k[X]$.
 \end{proof}

\begin{pro}
Let $X$ be an involutory quandle such that $\mathbf{k}[X]$ has only trivial idempotents. Let $A$ be a non-trivial abelian group and  $\alpha: X \times X \to A$ a quandle 2-cocycle satisfying $\alpha_{x*y, y}=\alpha_{x, y}^{-1}$ for all $x, y \in X$. Then the extension $X \times_{\alpha} A$ is involutory and $\mathbf{k}[X \times_{\alpha} A]$ has non-trivial idempotents.
\end{pro}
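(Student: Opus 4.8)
The plan is to separate the two assertions, each of which reduces to machinery already established. First I would verify that the extension $X \times_{\alpha} A$ is involutory; this is exactly where the hypothesis $\alpha_{x*y,\, y}=\alpha_{x,y}^{-1}$ is used. From the extension operation $(x,s)*(y,t)=\big(x*y,\, s\,\alpha(x,y)\big)$ I would compute the iterated right multiplication
$$
\big((x,s)*(y,t)\big)*(y,t)=\big((x*y)*y,\; s\,\alpha(x,y)\,\alpha(x*y,y)\big).
$$
Since $X$ is involutory, the first coordinate is $(x*y)*y=x$, and the cocycle condition $\alpha(x*y,y)=\alpha(x,y)^{-1}$ collapses the second coordinate to $s$. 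Thus $S_{(y,t)}^{2}=\Id$ for every $(y,t)$, so $X\times_{\alpha}A$ is involutory; in particular it is of finite type.

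For the existence of non-trivial idempotents, I would invoke the projection $p:X\times_{\alpha}A\to X$, $p(x,s)=x$, which was already observed to be a quandle covering. The point is that this covering is \emph{non-trivial}: because $A$ is a non-trivial abelian group, each fibre $p^{-1}(x)=\{x\}\times A$ has $|A|\ge 2$ elements, so $p$ is not an isomorphism. Proposition \ref{covering with non-trivial idempotents} then yields non-trivial idempotents in $\mathbf{k}[X\times_{\alpha}A]$ directly; concretely, each fibre is, by Lemma \ref{quandle covering properties}(1), a trivial subquandle of size at least two, and Proposition \ref{trivial subquandle idempotent} manufactures idempotents supported on it.

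There is no serious obstacle in this argument; the only genuine content is recognizing the cocycle condition $\alpha_{x*y,y}=\alpha_{x,y}^{-1}$ as precisely the constraint forcing the fibrewise $A$-contributions to cancel under the squared right multiplication, which is what makes the extension involutory. The assumption that $\mathbf{k}[X]$ has only trivial idempotents is not needed for mere existence, but since $X\times_{\alpha}A$ is of finite type, it is exactly the input required to upgrade the conclusion: Theorem \ref{quandle covering theorem} then describes the full idempotent set $\mathcal{I}\big(\mathbf{k}[X\times_{\alpha}A]\big)$ explicitly, which is presumably the intended strengthening.
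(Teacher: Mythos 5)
Your proposal is correct, and its skeleton matches the paper's proof: check involutority by the direct computation $\big((x,s)*(y,t)\big)*(y,t)=\big((x*y)*y,\,s\,\alpha(x,y)\,\alpha(x*y,y)\big)=(x,s)$, then observe that the projection $p:X\times_{\alpha}A\to X$ is a non-trivial quandle covering and invoke the covering machinery. The one genuine difference is which covering result you cite for existence: the paper concludes directly from Theorem~\ref{quandle covering theorem} (which needs both the finite-type hypothesis and the assumption that $\mathbf{k}[X]$ has only trivial idempotents, and in return yields the complete list of idempotents), whereas you first obtain bare existence from Proposition~\ref{covering with non-trivial idempotents} --- equivalently from Lemma~\ref{quandle covering properties}(1) plus Proposition~\ref{trivial subquandle idempotent} --- which requires neither hypothesis. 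Your observation that the trivial-idempotent assumption on $\mathbf{k}[X]$ is superfluous for mere existence, and only enters when one upgrades to the full description of $\mathcal{I}\big(\mathbf{k}[X\times_{\alpha}A]\big)$ via Theorem~\ref{quandle covering theorem}, is a correct and slightly sharper reading than the paper's one-line proof; the paper makes the same point implicitly with its closing remark that the theorem ``gives the precise set of idempotents.''
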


\begin{proof}
A direct check shows that the condition $\alpha_{x*y, y}=\alpha_{x, y}^{-1}$ is equivalent to $X \times_{\alpha} A $ being involutory. Since the map $p:X \times_{\alpha} A \to X$ is a non-trivial quandle covering, the result follows from Theorem \ref{quandle covering theorem}. In fact, Theorem \ref{quandle covering theorem} gives the precise set of idempotents. 
\end{proof}

\begin{pro}\label{idempotents forming quandle covering}
Let $X$ be a quandle of finite type and $p : X \to Y$ a quandle covering. Then the set 
\begin{eqnarray}
\nonumber I &=& \Big\{ \sum_{y \in J} \Big( \sum_{x \in I_y,~~\sum \alpha_x =0}  \alpha_x ~[e_x]_{x_0} \Big) + \Big(\sum_{x' \in I_{y_0},~~\sum \alpha_{x'} =1} \alpha_{x'}~ e_{x'}\Big) ~\bigl\vert \\
\nonumber && J \in \mathcal{F}(Y),~~I_y \in \mathcal{F}(p^{-1}(y)),~~I_{y_0} \in \mathcal{F}(p^{-1}(y_0)),~~ x_0 \in  I_{y_0}, ~~y_0 \in Y,~~\alpha_x, \alpha_{x'}  \in \mathbf{k}\Big\}
\end{eqnarray}
of idempotents of $\mathbf{k}[X]$ is a quandle with respect to the ring multiplication.
\end{pro}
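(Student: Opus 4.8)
The plan is to reduce the ring multiplication on $I$ to an induced automorphism, after which all three quandle axioms become formal consequences of the quandle relations in $X$. Write a typical element $u \in I$ as $u = v + w$, where $w = \sum_{x' \in I_{y_0},~\sum \alpha_{x'} = 1}\alpha_{x'} e_{x'}$ is its augmentation-one part, supported in a single fibre $p^{-1}(y_0)$ with distinguished base point $x_0 \in I_{y_0}$, and $v$ is the augmentation-zero part. The first and main step is to prove that right multiplication by $u$ equals the ring automorphism $\hat{S}_{x_0}$ of $\mathbf{k}[X]$ induced by the inner automorphism $S_{x_0}$. Indeed, for any $t \in \mathbf{k}[X]$, equation \eqref{even dihedral eq1} gives $t w = \hat{S}_{x_0}(t)$; and rewriting each summand of $v$ as $\sum_{x \in I_y}\alpha_x [e_x]_{x_0} = \sum_{k=0}^{n_{x_0}-1}\hat{S}_{x_0}^{\,k}\big(\sum_{x \in I_y}\alpha_x e_x\big)$ displays $v$ as a sum of augmentation-zero elements each supported in a single fibre, so equation \eqref{even dihedral eq2} gives $t v = 0$. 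Hence $t u = \hat{S}_{x_0}(t)$. Moreover $\hat{p}(u) = e_{y_0}$, since $\hat{p}(w) = e_{y_0}$ and $\hat{p}(v) = 0$ (each fibrewise coefficient sum of $v$ vanishes); thus the fibre $y_0$ is determined by $u$, and as $p$ is a covering we have $S_{x_0} = S_{x_0'}$ for all $x_0' \in p^{-1}(y_0)$, so the induced map $\hat{S}_{x_0}$ depends only on $u$. I will therefore work with the operation $u_1 * u_2 := u_1 u_2 = \hat{S}_{x_{u_2}}(u_1)$.

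Idempotency is immediate: each element of $I$ is an idempotent by the computation in the first part of the proof of Theorem \ref{quandle covering theorem}, so $u * u = u^2 = u$. For closure I would take $u_1 = v_1 + w_1 \in I$ with base point $a$ and fibre $y_{u_1}$, and $u_2 \in I$ with base point $b$, and analyse $u_1 * u_2 = \hat{S}_b(u_1)$. The conjugation identity $S_b S_a S_b^{-1} = S_{a * b}$ yields $\hat{S}_b\big([e_x]_a\big) = [e_{S_b(x)}]_{a * b}$, so $\hat{S}_b(v_1)$ is again a combination of $S_{a * b}$-orbit sums whose fibrewise coefficient sums are still zero (by Lemma \ref{quandle covering properties} the automorphism $S_b$ merely permutes the fibres bijectively), while $\hat{S}_b(w_1)$ is an augmentation-one element supported in the single fibre $p^{-1}(y_{u_1} * y_{u_2})$ with base point $S_b(a) = a * b$. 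Hence $u_1 * u_2 \in I$, and its base point transforms according to $x_{u_1 * u_2} = x_{u_1} * x_{u_2}$.

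It remains to verify invertibility of right multiplications and right distributivity. Since $\hat{S}_b$ maps $I$ into $I$ and $\hat{S}_b^{\,n_b} = \Id$ on $\mathbf{k}[X]$ (as $X$ is of finite type), the restriction $\hat{S}_b|_I$ is an injective self-map of $I$ with $(\hat{S}_b|_I)^{n_b} = \Id$, hence a bijection; so $u_1 \mapsto u_1 * u_2$ is a bijection of $I$. For right distributivity, computing both sides as compositions of induced automorphisms applied to $u_1$ gives
$$(u_1 * u_2) * u_3 = \hat{S}_{x_{u_3}}\hat{S}_{x_{u_2}}(u_1),$$
while the base-point rule $x_{u_2 * u_3} = x_{u_2} * x_{u_3}$ gives
$$(u_1 * u_3) * (u_2 * u_3) = \hat{S}_{x_{u_2} * x_{u_3}}\hat{S}_{x_{u_3}}(u_1).$$
These coincide because the quandle relation $S_{x_{u_2} * x_{u_3}} S_{x_{u_3}} = S_{x_{u_3}} S_{x_{u_2}}$ holds in $\Inn(X)$ and the assignment $S \mapsto \hat{S}$ is functorial. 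Together with idempotency and closure, this shows that $I$ is a quandle under ring multiplication.

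The main obstacle is the first step: isolating the clean identity that right multiplication by $u$ is exactly the induced automorphism $\hat{S}_{x_0}$, and confirming that it is independent of the decomposition $u = v + w$ and of the base point chosen within the fibre $p^{-1}(y_0)$. This is exactly where the hypotheses are used --- the covering property forces the fibres to be trivial subquandles and forces $S_{x_0} = S_{x_0'}$ along a fibre, while the finite-type hypothesis makes the orbit sums $[e_x]_{x_0}$ finite and $\hat{S}_{x_0}$-invariant. It is also what rescues right distributivity for $I$, even though the full idempotent set of a quandle ring may fail to be right distributive, as noted in Remark \ref{failure of distributivity}. Once this reduction is established, the remaining axioms follow formally from $S_b S_a S_b^{-1} = S_{a * b}$, the relation $S_{a * b} S_b = S_b S_a$, and functoriality of the quandle-ring construction.
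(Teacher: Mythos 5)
Your proposal is correct and takes essentially the same approach as the paper: both proofs hinge on showing, via equations \eqref{even dihedral eq1} and \eqref{even dihedral eq2}, that right multiplication by an element of $I$ coincides with the ring automorphism $\hat{S}_{x_0}$ induced by a base point of its augmentation-one fibre, and then deduce closure (using $S_{b}S_{a}S_{b}^{-1}=S_{a*b}$ to send orbit sums to orbit sums) and the remaining quandle axioms from that identity. Your write-up is merely more explicit where the paper is terse, e.g.\ in decomposing the augmentation-zero part into single-fibre pieces before applying \eqref{even dihedral eq2} and in spelling out bijectivity (via finite type) and right distributivity.
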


\begin{proof}
Consider the elements 
\begin{eqnarray*}
u &=& \sum_{y \in J_1} \Big( \sum_{x \in I_y,~~\sum \alpha_x =0}  \alpha_x ~ [e_x]_{x_1}  \Big) + \Big(\sum_{x' \in I_{y_1},~~\sum \alpha_{x'} =1} \alpha_{x'}~ e_{x'}\Big),\\
v &=&\sum_{y \in J_2} \Big( \sum_{x \in I_y,~~\sum \beta_x =0}  \beta_x ~[e_x]_{x_2}  \Big) + \Big(\sum_{x' \in I_{y_2},~~\sum \beta_{x'} =1} \beta_{x'}~ e_{x'}\Big)
\end{eqnarray*}
in the set $I$, where $J_i \in \mathcal{F}(Y)$ and $I_{y} \in \mathcal{F}(p^{-1}(y))$, $y_i \in Y$ and $x_i \in I_{y_i}$. Then we have
\begin{eqnarray*}
& uv&\\
 &=& \Big(\sum_{y \in J_1} \Big( \sum_{x \in I_y,~~\sum \alpha_x =0}  \alpha_x ~ [e_x]_{x_1} \Big) + \Big(\sum_{x' \in I_{y_1},~~\sum \alpha_{x'} =1} \alpha_{x'}~ e_{x'}\Big) \Big) \\
&& \Big( \sum_{y \in J_2} \Big( \sum_{x \in I_y,~~\sum \beta_x =0}  \beta_x ~[e_x]_{x_2}  \Big) + \Big(\sum_{x' \in I_{y_2},~~\sum \beta_{x'} =1} \beta_{x'}~ e_{x'}\Big)\Big) \\
&=& \Big(\sum_{y \in J_1} \Big( \sum_{x \in I_y,~~\sum \alpha_x =0}  \alpha_x ~[e_x]_{x_1} \Big) + \Big(\sum_{x' \in I_{y_1},~~\sum \alpha_{x'} =1} \alpha_{x'} ~e_{x'}\Big) \Big) \Big(\sum_{x' \in I_{y_2},~~\sum \beta_{x'} =1} \beta_{x'}~ e_{x'}\Big)\Big),\\
&& \quad \textrm{by}~ \eqref{even dihedral eq2} \\
&=& \Big(\sum_{y \in J_1} \Big( \sum_{x \in I_y,~~\sum \alpha_x =0}  \alpha_x ~[e_x]_{x_1} \Big) + \Big(\sum_{x' \in I_{y_1},~~\sum \alpha_{x'} =1} \alpha_{x'} ~e_{x'}\Big) \Big) e_{x_2}, \quad \textrm{by} ~\eqref{even dihedral eq1}\\
&=& \sum_{y \in J_1} \Big( \sum_{x \in I_y,~~\sum \alpha_x =0}  \alpha_x ~\big([e_x]_{x_1}\big)e_{x_2} \Big) + \Big(\sum_{x' \in I_{y_1},~~\sum \alpha_{x'} =1} \alpha_{x'} ~ e_{x'*x_2}\Big),\\
&=& \sum_{y \in J_1} \Big( \sum_{x*x_2 \in I_{y*y_2},~~\sum \alpha_x =0}  \alpha_x~[e_{x*x_2}]_{x_1*x_2} \Big) + \Big(\sum_{x'*x_2 \in I_{y_1*y_2},~~\sum \alpha_{x'} =1} \alpha_{x'} ~e_{x'*x_2}\Big),
\end{eqnarray*}
since $\big([e_x]_{x_1}\big)e_{x_2}=[e_{x*x_2}]_{x_1*x_2}$ due to right-distributivity. Note here that $x_1*x_2 \in I_{y_1*y_2}$. Thus, we have proved that $uv \in I$. The preceding computation also shows that the right multiplication by $v$ is precisely the right multiplication by 
$e_{x_2}$ for any fixed $x_2 \in I_{y_2}$. In other words, the right multiplication by $v$ is the ring automorphism $\hat{S}_{x_2}$ of $\mathbf{k}[X]$. This proves that the set $I$ is a quandle.
\end{proof}
 
An immediate consequence of Proposition \ref{idempotents forming quandle covering} is the following.
 
 \begin{cor}\label{idempotens form quandle}
 Let $X$ be a quandle of finite type and $p : X \to Y$ a quandle covering. Suppose that $\mathbf{k}[Y]$ has only trivial idempotents. Then the following hold:
\begin{enumerate}
\item The set of all idempotents of $\mathbf{k}[X]$ is  a quandle with respect to the ring multiplication.
\item The right multiplication by each idempotent of $\mathbf{k}[X]$ is a ring automorphism induced by some trivial idempotent of $\mathbf{k}[X]$.
\end{enumerate}
 \end{cor}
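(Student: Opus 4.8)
The plan is to read off both assertions directly from Theorem~\ref{quandle covering theorem} together with Proposition~\ref{idempotents forming quandle covering}. The crucial observation is that, under the present hypotheses, the set $I$ appearing in the statement of Proposition~\ref{idempotents forming quandle covering} is literally the same collection of expressions that Theorem~\ref{quandle covering theorem} identifies with $\mathcal{I}\big(\mathbf{k}[X]\big)$. Indeed, the finite-type assumption on $X$ guarantees that each symbol $[e_x]_{x_0}$ is a finite sum, so both descriptions make sense, and since $\mathbf{k}[Y]$ has only trivial idempotents, Theorem~\ref{quandle covering theorem} yields the equality $\mathcal{I}\big(\mathbf{k}[X]\big)=I$.

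With this identification in hand, assertion~(1) is immediate. Proposition~\ref{idempotents forming quandle covering} already establishes that $I$ is closed under the ring multiplication and forms a quandle with respect to it, and we have just observed that $I=\mathcal{I}\big(\mathbf{k}[X]\big)$; hence the set of all idempotents of $\mathbf{k}[X]$ is a quandle under ring multiplication.

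For assertion~(2), I would invoke the explicit computation carried out inside the proof of Proposition~\ref{idempotents forming quandle covering}. There it is shown that for any $v\in I$ the right multiplication $w\mapsto wv$ agrees with $w\mapsto w\,e_{x_2}$ for a fixed basis element $x_2\in I_{y_2}$; that is, it coincides with $\hat{S}_{x_2}$, the ring automorphism of $\mathbf{k}[X]$ induced by the quandle automorphism $S_{x_2}$. Since $e_{x_2}$ is a trivial idempotent and, by~(1), every idempotent of $\mathbf{k}[X]$ lies in $I$, this says precisely that right multiplication by each idempotent is a ring automorphism induced by some trivial idempotent, which is assertion~(2).

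I do not anticipate a genuine obstacle here, as the corollary is an assembly of the two preceding results; the only point requiring a moment's care is to confirm that the two set-descriptions coincide, so that ``the set of all idempotents'' may legitimately be substituted for the formal set $I$, and to note that $\hat{S}_{x_2}$, being induced by a quandle automorphism, is a ring automorphism rather than merely an endomorphism.
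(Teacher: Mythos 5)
Your proposal is correct and follows essentially the same route as the paper, which presents this corollary as an immediate consequence of Proposition \ref{idempotents forming quandle covering}: under the hypothesis that $\mathbf{k}[Y]$ has only trivial idempotents, Theorem \ref{quandle covering theorem} identifies $\mathcal{I}\big(\mathbf{k}[X]\big)$ with the set $I$ of that proposition, and the computation inside its proof shows right multiplication by any such idempotent coincides with $\hat{S}_{x_2}$ for a trivial idempotent $e_{x_2}$. Your added care in checking that the two set-descriptions coincide, and that $\hat{S}_{x_2}$ is an automorphism (not merely an endomorphism), is exactly the intended reading.
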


Note that Proposition \ref{medial endomorphisms} already proves the endomorphism part in assertion (2) of Corollary \ref{idempotens form quandle} for medial quandles.
\medskip

\section{Idempotents in quandle rings of free products}\label{sec idempotents in free products}
The free product of quandles can be defined as follows. Let $X_i = \langle S_i~|~R_i \rangle$ be a collection of $n \ge 2$ quandles given in terms of presentations. Then their {\it free product} $X_1 \star X_2  \star \cdots \star X_n$ is the quandle defined by the presentation
$$
X_1 \star X_2  \star \cdots \star X_n = \langle S_1 \sqcup S_2 \sqcup \cdots \sqcup S_n ~\mid~R_1 \sqcup R_2 \sqcup \cdots \sqcup R_n\rangle.
$$
For example, the free quandle $FQ_n$ of rank $n$ can be seen as 
$$
FQ_n = \langle x_1\rangle \star \langle x_2\rangle \star \cdots \star\langle x_n\rangle,
$$
the free product of $n$ copies of trivial one element quandles $\langle x_i\rangle$.
\par

Recall that, each element of a quandle $X$ has a canonical left-associated expression $x_0*^{\epsilon_1}x_1*^{\epsilon_2}\cdots*^{\epsilon_n}x_n$, where $x_0\neq x_1$, and if $x_i=x_{i+1}$ for any $1 \le i \le n-1$, then $\epsilon_i= \epsilon_{i+1}$.
\par
Lack of associativity in quandles makes it hard to have a normal form for elements in free products of quandles. We overcome this difficulty by defining a length for elements in free products.  Let $X= X_1 \star X_2 \star \cdots \star X_n$ be the free product of $n \ge 2$ quandles. Given an element $w \in X$, we define the length $\ell(w)$ of $w$ as 
\begin{eqnarray*}
\ell(w) &=& \min \Big\{r \mid w ~\textrm{can be written as a canonical left associated product of} ~r\\
&& \quad \quad  \textrm{elements from}~ X_1 \sqcup X_2 \sqcup \cdots \sqcup X_n \Big\}.
\end{eqnarray*}
Notice that each $w \in X$ has a reduced left associated expression attaining the length $\ell(w)$. This can be done by gathering together all the leftmost alphabets in a left associated expression of $w$ that lie in the same component quandle $X_i$, and rename it as a single element of $X_i$. This shows that $\ell(w)=1$ if and only if $w \in X_i$ for some $i$. Equivalently,  $\ell(w) \ge 2$ if and only if $w \in X\setminus (\sqcup_{s=1}^n X_s)$. 
\par
For example, if $x_1, x_2 \in X_i$ and $y_1, y_2 \in X_j$ for $i \ne j$, then $\ell(x_1*x_2)=1$, $\ell(x_1*x_2*^{-1}x_1)=1$, $\ell(x_1*y_1)=2$, $\ell(x_1*y_1*y_2)=3$ and $\ell(x_1*y_1*y_2*x_2)=4$.
\par
Note that, if $X= X_1 \star X_2 \star \cdots \star X_n$, then every $u \in \mathbf{k}[X]$ can be written uniquely in the form 
\begin{equation}\label{elements in rings of free products}
u= u_1+u_2+ \cdots + u_n + v,
\end{equation} 
where each $u_i \in \mathbf{k}[X_i]$,~ $v = \sum_{k=1}^m\gamma_k e_{w_k}$ with each $\ell(w_k) \ge 2$ and $\gamma_k \in \mathbf{k}$.

\begin{pro}
Let $X= X_1 \star X_2 \star \cdots \star X_n$ be the free product of $n$ quandles such that each $\mathbf{k}[X_i]$ has only trivial idempotents. Then any idempotent $u$ of $\mathbf{k}[X]$ can be written uniquely as
$$u=\alpha_1 e_{x_1} + \alpha_2 e_{x_2} + \cdots +\alpha_n e_{x_n} + v,$$
where $x_i \in X_i$, $v=\sum_{k=1}^m\gamma_k e_{w_k}$ with $\ell(w_k) \ge 2$ and $\alpha_i, \gamma_k \in \mathbf{k}$ for all $i$ and $k$.
\end{pro}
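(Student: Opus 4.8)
The plan is to reduce to two factors and then grade the quandle ring by the leftmost letter of the canonical form. First I would use \eqref{elements in rings of free products} to write the idempotent uniquely as $u=u_1+\cdots+u_n+v$ with $u_i\in\mathbf{k}[X_i]$ and $v$ supported on words of length $\ge 2$; the uniqueness claimed in the statement is then inherited from that of \eqref{elements in rings of free products}, so the entire content is that each $u_i$ is a scalar multiple of a single basis element. Fixing $i$ and using associativity of the free product to regard $X=X_i\star X'$ with $X'=\star_{j\ne i}X_j$ (which does not change which elements lie in the factor $X_i$), it suffices to prove the following two-factor assertion, in which nothing is assumed about the second factor: if $\mathbf{k}[A]$ has only trivial idempotents, then for every quandle $B$ and every idempotent $u$ of $\mathbf{k}[A\star B]$, the part $u_A$ of $u$ lying in $A$ is supported on at most one element. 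Taking $A=X_i$ then settles each $u_i$.

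The engine is that right multiplication never disturbs the leftmost letter. Writing $w=x_0*^{\epsilon_1}x_1*^{\epsilon_2}\cdots$ in canonical form, Lemma \ref{Lem:the canonical left associated form} shows $w*w'$ is formed by appending a reversed copy of $w'$ on the right; since a canonical form forbids $x_0=x_1$, the letter $x_0$ can only ever be confronted by a copy of itself, and $x_0*^{\pm1}x_0=x_0$, so the leftmost letter of $w*w'$ equals that of $w$. Hence $e_ae_b=e_{a*b}$ always has the same initial letter as $a$, and $\mathbf{k}[A\star B]$ is the direct sum of the right ideals spanned by the basis elements with a fixed leftmost letter. Writing $u=\sum_\iota u^{(\iota)}$ along this grading and comparing summands in $u^2=u$ gives $u^{(\iota)}=u^{(\iota)}u$ for every leftmost letter $\iota$. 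Extracting from $u^2=u$ the coefficients on the length-one elements of $A$ then yields an identity $u_A=u_A^2+D_A$, in which $D_A$ gathers exactly the contributions $e_{z*^{-1}b}\,e_b=e_z$ with $z\in A$ and $b\notin A$.

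To feed in the hypothesis on $\mathbf{k}[A]$, I would use the retraction $r_A\colon A\star B\to A$ that is the identity on $A$ and constant on $B$; this is a quandle homomorphism because any one-point subset is a subquandle. The induced ring map $\hat r_A$ sends $u$ to an idempotent of $\mathbf{k}[A]$, which by assumption is $0$ or a trivial idempotent $e_{z_0}$, hence supported on a single element; since $\hat r_A$ fixes the basis elements of $A$, this is a single-support constraint tying down $u_A$.

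The main obstacle is the correction $D_A$: words of $v$ of the form $e_{z*^{-1}b}$ multiply basis elements $e_b$ of $u$ back down into length one, so $u_A$ is genuinely coupled to $v$ and cannot be recovered from $\hat r_A(u)$ alone (the retraction collapses long words into $A$ uncontrollably, which is precisely why the coefficient $\alpha_i$ in the statement may differ from $0$ and $1$). I expect to sever this coupling by downward induction on the maximal length $M$ present in $u$. When $M=1$ one has $v=0$, the length-two part of $u^2=u$ forces $u_Au_B=u_Bu_A=0$, and since $\mathbf{k}[A]$ has only trivial idempotents $u_A$ must be a trivial idempotent or zero, so it is single-supported. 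When $M\ge 2$, Lemma \ref{Lem:the canonical left associated form} shows products of the top-length terms can reach length $3M-1$, which exceeds $M$, so the terms of length greater than $M$ in $u^2$ have no counterpart in $u$ and must cancel; analysing these forced cancellations rigidifies the top-length stratum, lets one peel it off, and transfers control to shorter words until the length-one identity can be solved. Carrying out this cancellation bookkeeping and showing it forces $u_A$ onto a single basis element is the technical heart of the argument; it is the same length analysis, resting on Lemma \ref{Lem:the canonical left associated form}, that underlies Theorem \ref{idempotents in free products}.
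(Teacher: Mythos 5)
Your proposal is not a complete proof: it stops at what you yourself call ``the technical heart.'' The downward induction on the maximal length $M$ is never carried out, and there is no indication of how the forced cancellations in lengths greater than $M$ would ``rigidify'' and ``peel off'' the top stratum. This step is genuinely delicate: Lemma~\ref{Lem:the canonical left associated form} gives only an \emph{expression} of length $3M-1$ for a product of two length-$M$ words, while the reduced length can collapse (already $w*w=w$), distinct pairs $(k,l)$ can yield the same element $w_k*w_l$ and cancel against one another, and products of top-length terms can land in \emph{lower} strata, so the strata do not decouple and ``peeling off'' is not a well-defined operation without substantial extra argument. A secondary gap: your grading of $\mathbf{k}[A\star B]$ into right ideals indexed by the leftmost letter presupposes that the leftmost letter of a reduced left-associated expression is an invariant of the quandle element, not just of the expression. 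That is a normal-form statement about free products of quandles which Lemma~\ref{Lem:the canonical left associated form} does not provide and which you do not prove (it can be extracted from the enveloping-group model of a free product, but it needs an argument).

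For comparison: the part of your argument that is complete — the unique decomposition \eqref{elements in rings of free products} together with the retractions $r_A$ (the paper's $p_i$, identity on $X_i$ and constant $z_i$ on the other factors) — is essentially the paper's \emph{entire} proof. The paper computes $\hat{p_i}(u)=u_i+\sum_{j\ne i}\varepsilon(u_j)\,e_{z_i}+\hat{p_i}(v)$, notes this is zero or a trivial idempotent of $\mathbf{k}[X_i]$, and concludes at once that $u_i=\alpha_i e_{x_i}$. The obstacle you flag is precisely the step this inference skips: $\hat{p_i}(v)$ need not be supported at $e_{z_i}$ or near the support of $u_i$ (for instance, with $X_1=R_3$, $X_2=\{c\}$ and $z_1=0$ one has $p_1(1*c)=1*0=2$), so single-supportedness of the image does not by itself pin down $u_i$; one must control the interaction with $v$, which is exactly your $D_A$. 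So your diagnosis of where the real difficulty lies is sharper than the paper's write-up, but since your proposed resolution is left entirely unexecuted, the proposal does not establish the proposition.
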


\begin{proof}
For each $i$, fix an element $z_i \in X_i$. Then the maps $p_i: X \to X_i$ defined by setting 
$$
p_i(x)= \left\{
\begin{array}{ll}
x \quad \textrm{if}~~ x \in X_i, &  \\
z_i \quad \textrm{if}~~ x \in X_j  ~~\textrm{for}~~j \ne i.
\end{array} \right.
$$
The universal property of free products implies that each $p_i$ is a quandle homomorphism. Let $u= u_1+u_2+ \cdots + u_n + v$ be an idempotent of $\mathbf{k}[X]$, where each $u_i \in \mathbf{k}[X_i]$,~ $v = \sum_{k=1}^m\gamma_k e_{w_k}$ with $\ell(w_k) \ge 2$ and $\gamma_k \in \mathbf{k}$. Then $\hat{p_i}(u)$ is an idempotent in $\mathbf{k}[X_i]$ for each $i$. Since each $\mathbf{k}[X_i]$ has only trivial idempotents and
$$\hat{p_i}(u)= u_i+ \sum_{j \ne i, ~j=1}^n \epsilon(u_j) e_{z_i} + \hat{p_i}(v),$$
it follows that $u_i=\alpha_i e_{x_i}$ for some $x_i \in X_i$ and $\alpha_i \in \mathbf{k}$. Note that if $ \epsilon(u_j) \neq 0$ for any $j \ne i$, then $x_i=z_i$. Thus, $u=\alpha_1 e_{x_1} + \alpha_2 e_{x_2} + \cdots +\alpha_n e_{x_n} + v$, and we are done.
\end{proof}

\begin{lem}\label{free product lemma}
Let $X= X_1 \star X_2 \star \cdots \star X_n$ be the free product of $n$ quandles with $n \ge 2$. Let $u \in \mathbf{k}[X]$ be an idempotent and
$u=u_1+u_2+ \cdots + u_n + v$ be its unique decomposition as in \eqref{elements in rings of free products}. Suppose that 
\begin{enumerate}
\item $\ell(w_k*w_l) \ge 2$ for any $k$ and $l$.
\item $\ell(w_k*x ) \ge 2$ for any $x \in \sqcup_{s=1}^n X_s$ and any $k$.
\end{enumerate}
Then $u_i$ is an idempotent of $\mathbf{k}[X_i]$ for each $i$.
\end{lem}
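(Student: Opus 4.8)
The plan is to use the length function $\ell$ to put a $\mathbf{k}$-module grading on $\mathbf{k}[X]$ and to read off the equation $u^2=u$ in its ``length one'' part. As a $\mathbf{k}$-module we have $\mathbf{k}[X]=\big(\bigoplus_{s=1}^n\mathbf{k}[X_s]\big)\oplus M$, where $M$ is spanned by those $e_w$ with $\ell(w)\ge 2$, and the first summand is exactly the span of the basis elements $e_x$ with $\ell(x)=1$. First I would expand
\[
u^2=\sum_{s,t=1}^n u_su_t+\sum_{s=1}^n u_sv+\sum_{s=1}^n vu_s+v^2 ,
\]
and sort these four families of products according to the length of the basis elements they produce; the goal is to show that the only summand landing in $\bigoplus_s\mathbf{k}[X_s]$ is $\sum_s u_s^2$.

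The diagonal products are harmless: each $u_su_s$ lies in $\mathbf{k}[X_s]$ since a product of two elements of $X_s$ stays in the subquandle $X_s$, while for $s\ne t$ every basis element $e_{a*b}$ occurring in $u_su_t$ has $a\in X_s$, $b\in X_t$, hence $\ell(a*b)=2$. Hypotheses (1) and (2) are tailored to discard the mixed terms: condition (1) gives $\ell(w_k*w_l)\ge 2$, so $v^2$ has no length one part, and condition (2) gives $\ell(w_k*x)\ge 2$ for every $x\in\sqcup_{s=1}^nX_s$, so each $vu_s$ has no length one part. The one remaining family is $\sum_s u_sv$, whose typical basis element is $e_{a*w_k}$ with $a\in X_s$ a single letter and $\ell(w_k)\ge 2$.

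The crux is therefore the length estimate that \emph{if $a$ is a single letter and $\ell(w)\ge 2$, then $\ell(a*w)\ge 2$}; intuitively, conjugating one generator by a word of length at least two can never collapse it to a single letter. Granting this, no term of $\sum_s u_sv$ has length one. Let $\pi_i:\mathbf{k}[X]\to\mathbf{k}[X_i]$ be the $\mathbf{k}$-linear projection killing every $e_z$ with $z\notin X_i$. Applying $\pi_i$ to $u^2=u$ and using that every contribution to $u^2$ except $u_i^2$ is annihilated by $\pi_i$ (the off-diagonal, mixed, and $u_sv$ terms have length $\ge 2$, and $u_s^2$ for $s\ne i$ lies in $\mathbf{k}[X_s]$), I obtain $u_i=\pi_i(u)=\pi_i(u^2)=u_i^2$, which is exactly the assertion.

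The length estimate is the step I expect to be the main obstacle, as it is the only place where reducedness of $w$ and the free product structure are genuinely used. I would write $w$ in its canonical reduced left associated form $w=c_0*^{\delta_1}c_1*^{\delta_2}\cdots*^{\delta_p}c_p$ with $p\ge 1$, noting that minimality of the expression forces $c_0$ and $c_1$ to lie in different component quandles (otherwise the two leftmost letters merge, shortening $w$). Applying Lemma \ref{Lem:the canonical left associated form} with $a$ as a length one left factor gives
\[
a*w=a*^{-\delta_p}c_p*^{-\delta_{p-1}}c_{p-1}\cdots*^{-\delta_1}c_1*c_0*^{\delta_1}c_1\cdots*^{\delta_p}c_p .
\]
The point to establish is that the trailing copy $c_0*^{\delta_1}c_1\cdots*^{\delta_p}c_p$ of $w$ is not completely absorbed under reduction: tracking the canonical form letter by letter shows that at least the two rightmost letters $c_{p-1},c_p$ survive, whence $\ell(a*w)\ge 2$. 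Carrying out this bookkeeping while controlling the cancellations that can arise from adjacent letters in the same component (so that one cannot naively pass to the associated group, which may identify distinct letters) is the technical heart of the argument.
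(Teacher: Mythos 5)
Your proof is correct and follows essentially the same route as the paper: expand $u^2=u$, note that hypothesis (1) kills the length-one part of $v^2$, hypothesis (2) kills that of the products $vu_s$, the off-diagonal products $u_su_t$ ($s\ne t$) only produce basis elements of length two, and the estimate $\ell(x*w_k)\ge 2$ disposes of the products $u_sv$, after which projecting onto each $\mathbf{k}[X_i]$ gives $u_i=u_i^2$. The length estimate you single out as the crux is exactly the point the paper itself asserts without proof (``we have $\ell(x*w_k)\ge 2$ for such elements''), so your write-up is, if anything, more candid about where the genuine work in this lemma lies.
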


\begin{proof}
Since $u=u^2$, we have
\begin{equation}\label{elements in rings of free products2}
u_1+u_2+ \cdots + u_n + v= u_1^2+ u_2^2 + \cdots + u_n^2 + v^2 + \sum_{i \neq j, ~i, j=1}^n u_i u_j +  \sum_{i=1}^n u_i v +  \sum_{j=1}^n v u_j.
\end{equation} 
If $v=0$, then \eqref{elements in rings of free products2} takes the form
\begin{equation}\label{elements in rings of free products3}
u_1+u_2+ \cdots + u_n= u_1^2+ u_2^2 + \cdots + u_n^2 + \sum_{i \neq j, ~i, j=1}^n u_i u_j.
\end{equation}
For $1 \le i \ne j \le n$, each basis element of $\mathbf{k}[X]$ appearing in a product $u_i u_j$ corresponds to a quandle element from $X \setminus (\sqcup_{s=1}^n X_s)$. For each $1 \le i \le n$, gathering all the summands on the right hand side of \eqref{elements in rings of free products3} corresponding to elements from the quandle $X_i$ implies that $u_i=u_i^2$, which is desired.
\par
Now suppose that $v \neq 0$. For each $1 \le k, l\le m$, the condition $\ell(w_k*w_l) \ge 2$ implies that the basis element of $\mathbf{k}[X]$ corresponding to the quandle element $w_k*w_l$ does not appear as a summand for any $u_j$. Further, each basis element appearing in a product $u_i v$ corresponds to a quandle element of the form $x*w_k$ for some $x \in X_i$ and some $1 \le k \le m$. But, we have $\ell(x*w_k) \ge 2$ for such elements. Lastly, the condition $\ell(w_k*x ) \ge 2$ for any $x \in \sqcup_{s=1}^n X_s$ also implies that the basis element of $\mathbf{k}[X]$ corresponding to the quandle element $w_k*x$ does not appear as a summand for any $u_j$. For each $1 \le i \le n$, gathering together all the summands on the right hand side of \eqref{elements in rings of free products2} corresponding to elements from the quandle $X_i$ imply that $u_i=u_i^2$, which is desired.
\end{proof}

We are now ready to prove the main result of this section.

\begin{thm}\label{idempotents in free products}
Let $FQ_n$ be the free quandle of rank $n \ge 1$. Then $\mathbb{Z}[FQ_n]$ has only trivial idempotents.
\end{thm}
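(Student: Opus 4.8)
The plan is to show that the only idempotents of $\mathbb{Z}[FQ_n]$ are the basis elements $e_x$. By the preceding proposition every idempotent has the form
\[
u=\alpha_1 e_{x_1}+\cdots+\alpha_n e_{x_n}+v,\qquad v=\sum_{k}\gamma_k e_{w_k},\quad \ell(w_k)\ge 2,
\]
so it suffices to prove $v=0$. Once $v=0$ we have $u=\sum_i\alpha_i e_{x_i}$, and in $u^2=u$ the element $x_i*x_j$ ($i\ne j$) has length $2$ and arises from the single product $e_{x_i}e_{x_j}$, so comparing its coefficient forces $\alpha_i\alpha_j=0$, while the length-$1$ terms give $\alpha_i^2=\alpha_i$; hence exactly one $\alpha_i$ equals $1$ and $u=e_{x_i}$ is trivial. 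Thus everything reduces to killing $v$, which I would do by a leading-term argument with respect to the length function $\ell$.

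The basic input is a multiplicativity estimate read off from Lemma \ref{Lem:the canonical left associated form}: writing $w,w'$ in reduced canonical form, the product $w*w'$ is a left-associated word in $\ell(w)+2\ell(w')-1$ letters whose canonical form ends with the canonical form of $w'$, whence
\[
\ell(w*w')\le \ell(w)+2\ell(w')-1,
\]
with equality precisely when no reduction occurs at the junction. It is convenient to realise $FQ_n$ inside $\Conj(F_n)$, where each element is a conjugate $w=g\,x_i\,g^{-1}$ of a generator and $\ell(w)=|g|+1$; then the conjugator of $w*w'=w'w(w')^{-1}$ is the reduced form of $w'\cdot g$, and reduction at the junction is governed exactly by whether the first letters of $g$ and of the conjugator of $w'$ agree.

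Now write $u=\sum_w c_w e_w$, set $L=\max\{\ell(w):c_w\ne 0\}$, and suppose $v\ne 0$, i.e.\ $L\ge 2$. Among all products $w*w'$ with $c_w,c_{w'}\ne 0$ consider those of maximal length $M$; the estimate gives $M\le 3L-1$, with $M=3L-1$ exactly when some non-cancelling pair of length-$L$ elements occurs. In that case the canonical form of the target $z=w*w'$ displays $w'$ as its suffix and determines the index $i$ and conjugator $g$ of $w$ from the complementary letters, so $z$ has a \emph{unique} producing pair; since $\mathbb{Z}$ is a domain, the coefficient of $e_z$ in $u^2$ equals $c_wc_{w'}\ne 0$. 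As $M>L$, this contradicts $u^2=u$, and the contradiction rules out $v\ne 0$ in this generic situation.

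The step I expect to be the main obstacle is the residual configuration in which \emph{every} product of two length-$L$ support elements cancels; by the junction criterion this forces all their conjugators to share a common first letter. Here I would argue by a secondary induction on complexity: peeling off the longest common prefix $p$ of these conjugators and applying the induced inner automorphism $\hat S_x^{\pm 1}$ — a ring automorphism of $\mathbb{Z}[FQ_n]$ that permutes basis elements and hence preserves idempotents — to strictly simplify $u$ and reduce to the cases already treated, the extreme case being that all length-$L$ elements share a full conjugator and differ only in the index $i$, which is handled by a direct computation. The delicate combinatorial core is precisely the guarantee that in every configuration some maximal-length product survives the summation, i.e.\ carries a non-vanishing coefficient; note that Lemma \ref{free product lemma} does \emph{not} apply directly, since its hypothesis (2) can fail in a free quandle (for instance $(x_i*^{-1}x_j)*x_j=x_i$ drops the length to $1$), so the cancellation bookkeeping must be carried out by hand. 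Once $v=0$ is forced, the computation of the first paragraph finishes the proof and, in particular, confirms Conjecture \ref{Main conjecture} for free quandles.
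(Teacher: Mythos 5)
Your plan breaks down on a point of definition before the combinatorics even start: a \emph{trivial} idempotent of $\mathbb{Z}[FQ_n]$ is any basis element $e_w$ with $w\in FQ_n$, not only $e_{x_i}$ for the generators. In particular $e_{x_1*x_2}$ is a (trivial) idempotent whose decomposition has $v=e_{x_1*x_2}\neq 0$, so the statement you reduce everything to --- that every idempotent has $v=0$ --- is false, and no argument can establish it. Your own machinery already signals this: when the top-length stratum of the support is a single element $w$, the only available pair is $(w,w)$, and $w*w=w$ always ``cancels'', so the generic leading-term case never applies to $u=e_w$; consequently your residual configuration cannot be ``ruled out'' or ``reduced to the cases already treated'' --- any correct treatment of it must be allowed to terminate in the conclusion $u=e_w$ with $\ell(w)\ge 2$, a possibility your plan does not provide for. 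The correct target is: if $v\neq 0$, then all $\alpha_i=0$, every $\gamma_k$ equals $1$, and $m=1$.

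Moreover, the step you yourself flag as the delicate core --- the configuration in which every junction cancels, equivalently all top-length conjugators share the same first letter --- is only sketched, and the proposed induction does not visibly make progress: applying $\hat{S}_{x_t}^{-\epsilon}$ to strip the common first letter shortens the top-length elements by one but \emph{lengthens} every support element whose conjugator does not begin with that letter (for instance each generator $x_j$ with $j\neq t$ acquires length $2$), so neither $L$ nor any evident complexity measure strictly decreases; ``strictly simplify $u$'' is unjustified. For comparison, the paper's proof avoids both problems: it first reduces to $FQ_2$ (every $FQ_n$ embeds in $FQ_2$ by the quandle Nielsen--Schreier theorem), then fixes one long support element $w_k$ at a time and applies the inner automorphism $\phi=S_{x_0}S_{x_0}S_{x_1}^{-\epsilon_1}\cdots S_{x_r}^{-\epsilon_r}$ built from the reduced expression of $w_k$; this sends $w_k$ to a generator while forcing every other transformed support element, and every product of two of them, to have length at least $3$, so Lemma \ref{free product lemma} applies to $\phi(u)$ and yields $\gamma_k=1$. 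With all $\gamma_k=1$, a direct comparison of the coefficients of $e_x$ and $e_y$ in $u=u^2$, combined with $\varepsilon(u)=\alpha+\beta+m\in\{0,1\}$, forces $\alpha=\beta=0$ and $m=1$, i.e.\ $u=e_{w_1}$. Your generic case (maximal product length $3L-1$, unique producing pair, domain coefficients) is sound, and your observation that Lemma \ref{free product lemma} cannot be invoked directly is correct; but as written the proposal proves the theorem only in that generic configuration and asserts a false statement in general.
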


\begin{proof}
An analogue of the Nielsen--Schreier theorem stating that every subquandle of a free quandle is free has been proved recently in \cite{IKK2019}. Let $FQ_2= \langle x\rangle  \star \langle y \rangle$ be the free quandle of rank two. Then, 
$$FQ_n \cong \langle x\rangle  \star \langle x*y \rangle \star \langle x*y*y \rangle \star \cdots \star \langle x*\underbrace{y*y* \cdots *y}_{(n-1)~\textrm{times}} \rangle$$
and embeds as a subquandle of $FQ_2$ for each $n \ge 3$. Thus, it suffices to prove that $\mathbb{Z}[FQ_2]$ has only trivial idempotents.
\par
Let  $u= \alpha e_x + \beta e_y +v$  be an idempotent of $\mathbb{Z}[FQ_2]$, where $v=\sum_{k=1}^m \gamma_k e_{w_k}$ with $\ell(w_k) \ge 2$ and $\alpha, \beta, \gamma_k \in \mathbb{Z}$. If $v=0$, then  Lemma \ref{free product lemma} implies that $\alpha e_x=  \alpha^2 e_x$ and $ \beta e_y=  \beta^2 e_y$. Hence, either $u=e_x$ or $u=e_y$, and $u$ is a trivial idempotent.
\par
Now, suppose that $v \neq 0$. Note that the first two leftmost alphabets in the reduced left associated expression of each $w_k$ are distinct. We claim that $\gamma_k=1$ for each $k$. This will be achieved by transforming the idempotent $u$ into a new idempotent such that conditions of Lemma \ref{free product lemma} are satisfied.  Fix a $k$ such that $1 \le k \le m$ and write
$$w_k= x_0 *^{\epsilon_1}x_1 *^{\epsilon_2} x_2 *^{\epsilon_3} \cdots  *^{\epsilon_r} x_r,$$
in its reduced left associated expression, where $x_i \in \{ x, y \}$ and $\epsilon_i \in \mathbb{Z}$ for each $i$. Since the expression is reduced, without loss of generality, we can assume that $x_0=x$ and $x_1=y$. Consider the inner automorphism 
$$\phi=S_{x_0}S_{x_0}S_{x_1}^{-\epsilon_1}S_{x_2}^{-\epsilon_2} \cdots S_{x_{r-1}}^{-\epsilon_{r-1}}S_{x_r}^{-\epsilon_r} $$
of $FQ_2$. We analyse the effect of $\phi$ on each summand of $u$. First note that $\phi(w_k)= x_0=x$. Consider any fixed $w_i$ for $i \neq k$ and write $w_i=y_0 *^{\mu_1}y_1 *^{\mu_2} y_2 *^{\mu_3} \cdots  *^{\mu_s} y_s$ in its reduced left associated expression, where $y_t \in \{ x, y \}$ and $\mu_t \in \mathbb{Z}$ for each $t$. We have
$$\phi(w_i)= y_0 *^{\mu_1}y_1 *^{\mu_2} y_2 *^{\mu_3} \cdots  *^{\mu_s} y_s *^{-\epsilon_r}  x_r *^{-\epsilon_{r-1}}  x_{r-1}*^{-\epsilon_{r-2}} \cdots *^{-\epsilon_1}x_1 * x_0 * x_0.$$
Considering the cases $s=r$, $s>r$ and $s<r$, and using the fact that the set of alphabets is $\{x, y \}$, we obtain $\ell(\phi(w_i)) \ge 3$. This clearly implies that $\phi(w_i)*x, \phi(w_i)*y \not\in \{x, y \}$ for any $i \ne k$. Now consider another $w_j$ for $j \neq k$ and $j \neq i$ and write $w_j=z_0 *^{\nu_1}z_1 *^{\nu_2} z_2 *^{\nu_3} \cdots  *^{\nu_l} z_l$ in its reduced left associated expression, where $z_t \in \{ x, y \}$ and $\nu_t \in \mathbb{Z}$ for each $t$. Then Lemma 
\ref{Lem:the canonical left associated form} gives
\begin{eqnarray*}
&&\phi(w_i) *\phi(w_j)\\
&=&\phi(w_i*w_j)\\
&=&\big((y_0 *^{\mu_1}y_1 *^{\mu_2}  \cdots  *^{\mu_s} y_s)(z_0 *^{\nu_1}z_1 *^{\nu_2}  \cdots  *^{\nu_l} z_l) \big)\\
&&*^{-\epsilon_r}  x_r *^{-\epsilon_{r-1}}  x_{r-1}*^{-\epsilon_{r-2}} \cdots *^{-\epsilon_1}x_1 * x_0 * x_0 \\
&=& y_0 *^{\mu_1}y_1 *^{\mu_2}  \cdots  *^{\mu_s} y_s *^{-\nu_l} z_l *^{-\nu_{l-1}} z_{l-1} *^{-\nu_{l-2}} \cdots *^{-\nu_1}z_1 *z_0 *^{\nu_1}z_1 *^{\nu_2}  \cdots  *^{\nu_l} z_l\\
&&*^{-\epsilon_r}  x_r *^{-\epsilon_{r-1}}  x_{r-1}*^{-\epsilon_{r-2}} \cdots *^{-\epsilon_1}x_1 * x_0 * x_0.
\end{eqnarray*}
As before, by comparing $\ell(w_i*w_j)$ and $r$, we obtain $\ell \big(\phi(w_i) *\phi(w_j)\big) \ge 3$. If $\alpha$ and $\beta$ are non-zero, then $\ell(\phi(x)), \ell(\phi(y)) \ge 3$ for the same reason. Thus, the only summand of the idempotent $\phi(u)= \alpha e_{\phi(x)} + \beta e_{\phi(y)} + \sum_{k=1}^m \gamma_k e_{\phi(w_k)}$ that corresponds to an element from $\{x, y \}$ is $\phi(w_k)$, and all the summands  corresponding to $\phi(w_i)$ for $i \ne k$ satisfy the conditions of  Lemma \ref{free product lemma}. Thus, we obtain $\gamma_k e_{\phi(w_k)} =(\gamma_k e_{\phi(w_k)})^2$, and hence $\gamma_k=1$, which proves the claim. On plugging this information back to $u$, we can write $u= \alpha e_x + \beta e_y + \sum_{k=1}^m e_{w_k}$. Since $u$ is an idempotent, we have
\begin{eqnarray*}
\alpha e_x + \beta e_y + \sum_{k=1}^m e_{w_k} &=& \alpha^2 e_x + \beta^2 e_y + \sum_{k, ~l=1}^m e_{w_k *w_l} + \alpha \beta e_{x*y} +\alpha \beta e_{y*x}\\
&&+  \alpha \sum_{k=1}^m e_{x* w_k} + \alpha \sum_{k=1}^m e_{w_k* x} + \beta \sum_{k=1}^m e_{y* w_k} + \beta \sum_{k=1}^m e_{w_k* y}.
\end{eqnarray*}
Comparing coefficients of $e_x$ gives
$$\alpha= \alpha^2, \quad \alpha= \alpha^2 + \sum_{w_k *w_l=x} 1, \quad \alpha= \alpha^2 + \beta \quad \textrm{or} \quad \alpha= \alpha^2 + \sum_{w_k *w_l=x} 1 + \beta.$$
Similarly, comparing coefficients of $e_y$ gives
$$\beta = \beta ^2, \quad \beta = \beta ^2 + \sum_{w_k *w_l=y} 1, \quad \beta = \beta ^2 + \alpha \quad \textrm{or} \quad \beta = \beta ^2 + \sum_{w_k *w_l=y} 1 + \alpha.$$
Using the fact that the coefficients are from $\mathbb{Z}$, a direct check shows that the only possible cases are
\begin{eqnarray*}
\alpha= \alpha^2 &\textrm{and}& \beta=\beta ^2,\\
\alpha= \alpha^2 + \beta &\textrm{and}& \beta=\beta ^2,\\
\alpha= \alpha^2 &\textrm{and}& \beta=\beta ^2+ \alpha,\\
\alpha= \alpha^2 + \beta&\textrm{and}& \beta=\beta ^2+ \alpha.
\end{eqnarray*}
This together with the fact that $\varepsilon(u)=\alpha+ \beta +m$ shows that $\alpha=\beta=0$ and $m=1$. Hence, $u=e_w$ for some $w \in FQ_2$, and the proof is complete.
\end{proof}

Since the link quandle of a trivial link with $n$ components is the free quandle of rank $n$, we have

\begin{cor}\label{idempotents in trivial link quandles}
If $L$ is a trivial link, then $\mathbb{Z}[Q(L)]$ has only trivial idempotents.
\end{cor}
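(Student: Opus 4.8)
The plan is to reduce the statement directly to Theorem \ref{idempotents in free products} by means of the classical identification of the quandle of a trivial link with a free quandle. The only substantive input is to verify that the link quandle $Q(L)$ of a trivial $n$-component link $L$ is isomorphic to the free quandle $FQ_n$ of rank $n$; once this is in hand the corollary follows immediately.

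First I would recall the standard presentation of a link quandle coming from a diagram: given a diagram $D$ of $L$, the quandle $Q(L)$ has one generator for each arc of $D$ and one relation for each crossing, in the Wirtinger-type scheme. For the trivial link one may choose a diagram consisting of $n$ disjoint simple closed curves with no crossings. Such a diagram has exactly $n$ arcs $x_1, \dots, x_n$ and no crossings, so the associated presentation has $n$ generators and no relations. By the definition of the free product of quandles recalled at the beginning of Section \ref{sec idempotents in free products}, this presentation is precisely $\langle x_1\rangle \star \langle x_2\rangle \star \cdots \star \langle x_n\rangle = FQ_n$. Hence $Q(L)\cong FQ_n$. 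The one point deserving a little care here is to confirm that distinct components really contribute distinct, unrelated generators, so that no collapsing to a free quandle of smaller rank occurs; this is exactly what the absence of crossings guarantees.

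With the isomorphism $Q(L)\cong FQ_n$ established, functoriality of the quandle ring construction (recalled in Section \ref{idempotents in quandle rings}) yields a ring isomorphism $\mathbb{Z}[Q(L)]\cong \mathbb{Z}[FQ_n]$. A ring isomorphism carries idempotents bijectively to idempotents and, being induced by a quandle isomorphism, carries basis (trivial) idempotents to trivial idempotents. Thus the non-trivial idempotents of $\mathbb{Z}[Q(L)]$ are in bijection with those of $\mathbb{Z}[FQ_n]$. By Theorem \ref{idempotents in free products} the latter ring has only trivial idempotents, and therefore so does $\mathbb{Z}[Q(L)]$. Since the whole argument rests on a known structural fact together with an already-proved theorem, there is no genuine obstacle beyond the verification of the rank in the preceding paragraph.
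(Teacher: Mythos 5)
Your proposal is correct and follows the same route as the paper: the paper's proof is precisely the observation that $Q(L)\cong FQ_n$ for a trivial $n$-component link, followed by an appeal to Theorem \ref{idempotents in free products}. You simply make explicit the standard justification (a crossing-free diagram yields the presentation with $n$ generators and no relations, hence the free quandle) that the paper cites as known.
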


We denote by $\Aut_{\textrm{algebra}}(\mathbf{k}[X])$ the group of $\mathbf{k}$-algebra automorphisms of $\mathbf{k}[X]$, that is, ring automorphisms of $\mathbf{k}[X]$ that are $\mathbf{k}$-linear.  Let $WB_n$ be the welded braid group on $n$-strands. See \cite{MR3689901} for a nice survey of these groups.

\begin{cor}\label{auto quandle ring free products}
$\Aut_{\textrm{algebra}}\big(\mathbb{Z}[FQ_n]\big) \cong \Aut_{\textrm{quandle}}(FQ_n)\cong WB_n$ for each $n \ge 1$.
\end{cor}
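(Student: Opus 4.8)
The plan is to reduce the statement to the second isomorphism $\Aut_{\textrm{quandle}}(FQ_n)\cong WB_n$, which is the known identification of the automorphism group of a free quandle with the welded braid group (see the survey \cite{MR3689901} and the references therein), and to concentrate all the work on the first isomorphism $\Aut_{\textrm{algebra}}\big(\mathbb{Z}[FQ_n]\big)\cong \Aut_{\textrm{quandle}}(FQ_n)$. First I would write down the natural candidate for this isomorphism. By functoriality of the quandle ring construction, each quandle automorphism $\phi$ of $FQ_n$ extends $\mathbb{Z}$-linearly to a $\mathbb{Z}$-algebra automorphism $\hat{\phi}$ of $\mathbb{Z}[FQ_n]$ determined by $\hat{\phi}(e_w)=e_{\phi(w)}$, and a direct check on basis elements shows $\widehat{\phi\circ\psi}=\hat{\phi}\circ\hat{\psi}$. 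This gives a group homomorphism $\Theta:\Aut_{\textrm{quandle}}(FQ_n)\to \Aut_{\textrm{algebra}}\big(\mathbb{Z}[FQ_n]\big)$, and the goal is to show $\Theta$ is an isomorphism.

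Injectivity is immediate: if $\hat{\phi}=\Id$, then $e_{\phi(w)}=\hat{\phi}(e_w)=e_w$ for every $w$, so $\phi(w)=w$ and $\phi=\Id$. The substance of the argument is surjectivity, and this is precisely where Theorem \ref{idempotents in free products} enters. Let $\Phi$ be an arbitrary $\mathbb{Z}$-algebra automorphism of $\mathbb{Z}[FQ_n]$. Since $\Phi$ is a ring homomorphism it sends idempotents to idempotents, and the same holds for $\Phi^{-1}$, so $\Phi$ restricts to a bijection of the set $\mathcal{I}\big(\mathbb{Z}[FQ_n]\big)$ onto itself. By Theorem \ref{idempotents in free products} this set consists only of the trivial idempotents $\{e_w \mid w \in FQ_n\}$. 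Hence for each $w$ there is a unique $\phi(w)\in FQ_n$ with $\Phi(e_w)=e_{\phi(w)}$, and the map $\phi:FQ_n\to FQ_n$ so defined is a bijection.

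It then remains to verify that $\phi$ is a quandle automorphism and that $\Phi=\hat\phi$. For the first, I would use that the quandle operation is encoded in the ring product via $e_{w}e_{w'}=e_{w*w'}$: applying the multiplicative map $\Phi$ gives $e_{\phi(w*w')}=\Phi(e_{w*w'})=\Phi(e_w)\Phi(e_{w'})=e_{\phi(w)}e_{\phi(w')}=e_{\phi(w)*\phi(w')}$, whence $\phi(w*w')=\phi(w)*\phi(w')$; being a bijective quandle homomorphism, $\phi\in\Aut_{\textrm{quandle}}(FQ_n)$. Finally, $\Phi$ and $\hat{\phi}$ are both $\mathbb{Z}$-linear and agree on the basis $\{e_w\}$, so $\Phi=\hat{\phi}=\Theta(\phi)$, proving surjectivity. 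Composing the resulting isomorphism with the known identification $\Aut_{\textrm{quandle}}(FQ_n)\cong WB_n$ yields the corollary; the $n=1$ case is the trivial observation that all three groups are trivial. The only real obstacle is the passage ``every algebra automorphism permutes the basis,'' which would fail for a quandle whose ring carried non-trivial idempotents, and it is exactly Theorem \ref{idempotents in free products} that rules this out for free quandles.
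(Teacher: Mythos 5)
Your proof is correct and follows essentially the same route as the paper: both arguments use Theorem \ref{idempotents in free products} to identify $\mathcal{I}\big(\mathbb{Z}[FQ_n]\big)$ with the basis $\{e_w \mid w \in FQ_n\}$, so that every $\mathbb{Z}$-algebra automorphism permutes the basis and hence arises from a quandle automorphism, with the second isomorphism quoted from the literature. You merely spell out the details the paper leaves implicit (injectivity of the induced map, multiplicativity forcing $\phi(w*w')=\phi(w)*\phi(w')$, and $\Phi=\hat{\phi}$ by linearity), which is a faithful elaboration rather than a different approach.
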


\begin{proof}
Obviously, each automorphism of $FQ_n$ induces an automorphism of $\mathbb{Z}[FQ_n]$. Conversely, if $\phi \in \Aut_{\textrm{algebra}}\big(\mathbb{Z}[FQ_n]\big)$, then $\phi$ is a bijection of the set $\mathcal{I}\big(\mathbb{Z}[FQ_n]\big)$ of all idempotents. Since $\mathbb{Z}[FQ_n]$ has only trivial idempotents, $FQ_n \cong \mathcal{I}\big(\mathbb{Z}[FQ_n]\big)$ via the map $x \mapsto e_x$, and hence $\phi$ can be viewed as an automorphism of $FQ_n$, proving the first isomorphism. The second isomorphism is a well-known result from \cite{MR1410467}.
\end{proof}
\medskip

\section{Idempotents in quandle rings of unions}\label{idempotents in unions}

Let $\{(X_i, *_i) \}_i$  be a family of quandles. Then the binary operation 
$$
x* y=\begin{cases}
x*_i y & ~\textrm{if}~ x, y \in X_i, \\
x &~\textrm{if}~  x \in X_i ~\textrm{and}~y \in X_j ~\textrm{for}~  i \neq j,
\end{cases} 
$$
turns the disjoint union  $\sqcup_i X_i$ into a quandle called the {\it union quandle}.

\begin{pro}\label{idempotent union}
Let $X= X_1 \sqcup X_2 \sqcup \cdots \sqcup  X_n$ be the disjoint union of $n \ge 2$ quandles. Then $\mathbf{k}[X]$ contains idempotents of the following form:
\begin{enumerate}
\item $\sum_{j=1}^n \alpha_j u_j$, where $u_j \in \mathbf{k}[X_j]$  is an idempotent with $\varepsilon(u_j)=1$ for each $j$ and $\sum_{i=1}^n \alpha_i=1$.
\item $\sum_{j=1}^n u_j$, where $u_i \in \mathbf{k}[X_i]$  is an idempotent with $\varepsilon(u_i)=1$ and $u_j \in \mathbf{k}[X_j]$ satisfy $u_j^2=0$ for each $j \neq i$.
\item $\sum_{j=1}^n \alpha_j (\sum_{x \in X_j} e_x)$, where $|X_j| < \infty$ and $\sum_{i=1}^n \alpha_i |X_i|=1$.
\end{enumerate}
\end{pro}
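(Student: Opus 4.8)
The plan is to reduce all three cases to a single structural identity describing how elements supported on different components multiply. First I would record that the defining rule $x*y=x$ for $x\in X_i$, $y\in X_j$ with $i\neq j$ lifts to the quandle ring: for $a\in\mathbf{k}[X_i]$ and $b\in\mathbf{k}[X_j]$ with $i\neq j$,
$$a\,b=\varepsilon(b)\,a.$$
This follows immediately by writing $a=\sum_{x\in X_i}\alpha_x e_x$, $b=\sum_{y\in X_j}\beta_y e_y$ and using $e_xe_y=e_{x*y}=e_x$, so that $ab=\big(\sum_y\beta_y\big)a=\varepsilon(b)a$. This is the only cross-component relation needed, and I would establish it at the outset.

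For the within-component (diagonal) products I would record two elementary facts. If $u_i\in\mathbf{k}[X_i]$ is an idempotent then $u_i^2=u_i$ by hypothesis, while $u_j^2=0$ forces $\varepsilon(u_j)=0$ because $\varepsilon$ is a ring homomorphism into the integral domain $\mathbf{k}$. Also, writing $\sigma_j=\sum_{x\in X_j}e_x$, the bijectivity of each right multiplication $S_y$ on $X_j$ gives $\sigma_j^2=\sum_{y\in X_j}\big(\sum_{x\in X_j}e_{x*_j y}\big)=|X_j|\,\sigma_j$, together with $\varepsilon(\sigma_j)=|X_j|$.

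With these in hand, each case is a direct expansion of $u^2$ into diagonal and off-diagonal parts, applying $a\,b=\varepsilon(b)\,a$ off the diagonal and then collecting the coefficient of each fixed summand. For (1) the coefficient of $u_j$ becomes $\alpha_j^2+\sum_{k\neq j}\alpha_j\alpha_k\varepsilon(u_k)=\alpha_j\sum_k\alpha_k=\alpha_j$, so $u^2=u$. For (2) the diagonal collapses to $u_i$ since $u_j^2=0$ for $j\neq i$, and off the diagonal $u_ju_k=\varepsilon(u_k)u_j$ survives only when $k=i$, contributing $\sum_{j\neq i}u_j$; hence $u^2=u_i+\sum_{j\neq i}u_j=u$. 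For (3) substituting $\sigma_j^2=|X_j|\sigma_j$ and $\sigma_j\sigma_k=|X_k|\sigma_j$ makes the coefficient of $\sigma_j$ equal to $\alpha_j\sum_k\alpha_k|X_k|=\alpha_j$, again giving $u^2=u$.

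I do not anticipate a genuine obstacle: all of the content sits in the identity $a\,b=\varepsilon(b)\,a$, and the remainder is bookkeeping over the index pairs $(j,k)$. The only point requiring a word of care is confirming that each candidate is nonzero, so that it qualifies as an idempotent in the sense of the paper. This holds because distinct components contribute disjoint basis elements, hence there is no cancellation across components; in (1) and (2) the honest idempotent $u_i$ (with $\varepsilon(u_i)=1$) is already nonzero, and in (3) the relation $\sum_i\alpha_i|X_i|=1\neq 0$ forces some $\alpha_j\neq 0$.
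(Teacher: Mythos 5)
Your proof is correct and takes essentially the same route as the paper's: both rest on the single cross-component identity $ab=\varepsilon(b)\,a$ for $a\in\mathbf{k}[X_i]$, $b\in\mathbf{k}[X_j]$ with $i\neq j$, followed by expanding $u^2$ and collecting coefficients in each of the three cases. The only cosmetic difference is that you treat diagonal and off-diagonal terms separately (and explicitly check nonvanishing), whereas the paper applies the identity $u_iu_j=\varepsilon(u_j)u_i$ uniformly, since under each case's hypotheses it also holds when $i=j$.
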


\begin{proof}
We begin by noting that if $u \in \mathbf{k}[X_i]$  and $v \in \mathbf{k}[X_k]$ for $i \ne k$, then $u v=  \varepsilon(v) u$. For assertion (1), take $w=  \sum_{j=1}^n \alpha_j u_j$, where $u_j$ is an idempotent of $\mathbf{k}[X_j]$ and  $\sum_{i=1}^n \alpha_i=1$. Then we have
$$w^2 = \sum_{i, j=1}^n \alpha_i \alpha_j  u_i u_j = \sum_{i, j=1}^n \alpha_i \alpha_j  \varepsilon(u_j) u_i =\sum_{i, j=1}^n \alpha_i \alpha_j  u_i=  \sum_{j=1}^n \alpha_j \big(\sum_{i=1}^n \alpha_i  u_i\big)= \sum_{j=1}^n \alpha_j w=w.$$
\par
For assertion (2), take $w=\sum_{j=1}^n u_j$, where $u_i$ is an idempotent in $\mathbf{k}[X_i]$ and $u_j \in \mathbf{k}[X_j]$ satisfy $u_j^2=0$ for each $j \neq i$. Since $\varepsilon(u_j)=0$ for all $j \neq i$ and $\varepsilon(u_i)=1$, it follows that
$$w^2 = \sum_{k \neq i,~~k, j=1}^n u_j u_k  + \sum_{j=1}^n u_j u_i= \sum_{k \neq i,~~k, j=1}^n \varepsilon(u_k) u_j  + \sum_{j=1}^n \varepsilon(u_i) u_j = w.$$
\par
For assertion (3), suppose that $|X_j| < \infty$ for each $j$ and take $w=\sum_{j=1}^n \alpha_j v_j$, where $v_j=\sum_{x \in X_j} e_x$ and $\sum_{i=1}^n \alpha_i |X_i|=1$. Then we see that
$$w^2= \sum_{i, j=1}^n \alpha_i \alpha_j v_iv_j=\sum_{i, j=1}^n \alpha_i \alpha_j |X_j| v_i = 
\sum _{i=1}^n \big(\sum_{j=1}^n \alpha_j |X_j| \big) (\alpha_i v_i)=\sum _{i=1}^n \alpha_i v_i=w.
$$
\end{proof}

\begin{rmk}
Note that Proposition \ref{idempotent union} holds for arbitrary families of quandles. It is interesting to see whether the proposition gives all the idempotents of the quandle ring of a union of quandles.
\end{rmk}

The union construction for two quandles has a twisted version when the quandles act on each other by automorphisms (see \cite[Proposition 11]{MR3948284}). We consider a simple case of this construction when both the quandles are trivial. Note that the automorphism group of a trivial quandle is the permutation group of the underlying set. Let $X,Y$ be trivial quandles, $f \in \Aut(X)$ and $g \in \Aut(Y)$. For $x \in X$ and $y \in Y$, setting $x*y=f(x)$ and $y*x=g(y)$ defines a quandle structure on the disjoint union $X \sqcup Y$, and we denote this quandle by $X \sqcup_{f, g} Y$. We prove a twisted version of Proposition \ref{idempotent union}.

\begin{pro}\label{idempotens in trivial unions}
Let $X$ and $Y$ be trivial quandles of orders $n$ and $m$, respectively. Let $\mathbf{k}$ be an integral domain such that characteristic of $\mathbf{k}$ is coprime to both $n$ and $m$. Let $f \in Aut(X)$ and $g \in Aut(Y)$ be automorphisms acting transitively on $X$ and $Y$, respectively. Then 
\begin{small}
$$\mathcal{I}\big(\mathbf{k}[X \sqcup_{f, g} Y]\big)= \mathcal{I}\big(\mathbf{k}[X]\big) \sqcup~ \mathcal{I}\big(\mathbf{k}[Y]\big) \sqcup \Big\{\alpha \big(\sum_{x \in X} e_x \big) + \beta \big(\sum_{y \in Y} e_y \big) ~\bigl\vert~ \alpha, \beta \in \mathbf{k}~~\textrm{~such that~}~~\alpha n+ \beta m=1 \Big\}.$$
\end{small}
\end{pro}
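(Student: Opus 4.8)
The plan is to write a general element of $\mathbf{k}[X \sqcup_{f, g} Y]$ as $u = p + q$ with $p = \sum_{x \in X} a_x e_x \in \mathbf{k}[X]$ and $q = \sum_{y \in Y} b_y e_y \in \mathbf{k}[Y]$, and to exploit the four product identities that the twisted union forces. Since $X$ and $Y$ are trivial quandles, a direct computation gives $p^2 = \varepsilon(p)\,p$ and $q^2 = \varepsilon(q)\,q$, while the cross terms collapse via $x*y = f(x)$ and $y*x = g(y)$ to $pq = \varepsilon(q)\,\hat{f}(p)$ and $qp = \varepsilon(p)\,\hat{g}(q)$, where $\hat{f}$ and $\hat{g}$ are the induced $\mathbf{k}$-algebra automorphisms of $\mathbf{k}[X]$ and $\mathbf{k}[Y]$. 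The reverse inclusion is then routine: for $p \in \mathcal{I}(\mathbf{k}[X])$ one has $\varepsilon(p)=1$, so $u=p$ satisfies $u^2 = \varepsilon(p)\,p = p$, and symmetrically for $\mathcal{I}(\mathbf{k}[Y])$; for the third family, writing $P = \sum_{x \in X} e_x$ and $Q = \sum_{y \in Y} e_y$ and using $\hat{f}(P)=P$, $\hat{g}(Q)=Q$, $\varepsilon(P)=n$, $\varepsilon(Q)=m$, the square $(\alpha P + \beta Q)^2$ collapses to $(\alpha n + \beta m)(\alpha P + \beta Q) = \alpha P + \beta Q$ precisely when $\alpha n + \beta m = 1$.

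For the forward inclusion I would take an idempotent $u = p+q$ and split $u^2 = u$ into its $\mathbf{k}[X]$- and $\mathbf{k}[Y]$-components, which is legitimate because $\hat{f}(p) \in \mathbf{k}[X]$ and $\hat{g}(q) \in \mathbf{k}[Y]$ keep the two summands separate. This yields the pair $\varepsilon(p)\,p + \varepsilon(q)\,\hat{f}(p) = p$ and $\varepsilon(q)\,q + \varepsilon(p)\,\hat{g}(q) = q$. Setting $a = \varepsilon(p)$ and $b = \varepsilon(q)$, the fact that $\varepsilon$ is a ring homomorphism forces $\varepsilon(u) = a+b \in \{0,1\}$, which governs the whole argument.

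The case analysis then finishes the proof. If $a+b = 0$, substituting $b = -a$ into the first equation gives $a\bigl(p - \hat{f}(p)\bigr) = p$; applying $\varepsilon$ and using $\varepsilon(\hat{f}(p)) = \varepsilon(p)$ forces $a = 0$, hence $b = 0$ and then $p = q = 0$, contradicting $u \ne 0$, so this case is vacuous. If $a+b = 1$, substituting $a = 1-b$ rewrites the two equations as $b\bigl(\hat{f}(p) - p\bigr) = 0$ and $a\bigl(\hat{g}(q) - q\bigr) = 0$. Now the hypotheses enter decisively: since $\mathbf{k}$ is an integral domain, each equation splits into ``the scalar vanishes'' or ``the vector is fixed,'' and since $f$ acts transitively on $X$ the fixed subspace of $\hat{f}$ in $\mathbf{k}[X]$ is exactly $\mathbf{k}\cdot P$ (a coefficient function constant on the single $f$-orbit), with the analogous statement for $\hat{g}$. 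I expect this fixed-point identification, combined with the bookkeeping of which of $a,b$ vanish, to be the main obstacle, since one must invoke the coprimality hypothesis to close each branch: if $b=0$ then $a=1$ and $\hat{g}(q)=q$ forces $q = dQ$ with $dm = \varepsilon(q) = 0$, and because the characteristic of $\mathbf{k}$ is coprime to $m$ we have $m \ne 0$ in $\mathbf{k}$, so integral-domain cancellation gives $d=0$, $q=0$ and $u = p \in \mathcal{I}(\mathbf{k}[X])$; the symmetric branch $a=0$ gives $u \in \mathcal{I}(\mathbf{k}[Y])$; and if $a \ne 0$ and $b \ne 0$, then $\hat{f}(p)=p$ and $\hat{g}(q)=q$ force $p = \alpha P$, $q = \beta Q$ with $\alpha n + \beta m = a+b = 1$, landing in the third family. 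Collecting the three branches yields the claimed description, and the disjointness of the union is automatic because the third-family representatives produced here have $a,b \ne 0$, hence genuinely nonzero components on both $X$ and $Y$.
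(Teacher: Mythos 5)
Your proof is correct and follows essentially the same route as the paper's: decompose $u=p+q$, split $u^2=u$ into its $\mathbf{k}[X]$- and $\mathbf{k}[Y]$-components, force $\varepsilon(u)=1$, and run the case analysis on $\varepsilon(p)$, $\varepsilon(q)$ using transitivity of $f$ and $g$ together with the coprimality hypothesis. The only differences are cosmetic: you package the cross terms via the induced automorphisms $\hat{f}$, $\hat{g}$ rather than writing out coefficient equations, and you verify the reverse inclusion explicitly, which the paper leaves as a routine check.
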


\begin{proof}
Note that any $u \in \mathbf{k}[X \sqcup_{f, g} Y]$ can be written uniquely as $u=v+w$, where $v= \sum_{x \in X} \alpha_x e_x \in \mathbf{k}[X]$, $w= \sum_{y \in Y} \beta_y e_y \in \mathbf{k}[Y]$ and $\alpha_x, \beta_y \in \mathbf{k}$. If $u=u^2$, then
$$v+w=v^2+w^2+vw+wv= \varepsilon(v)v + \varepsilon(w)w + \varepsilon(w) \sum_{x \in X} \alpha_x e_{f(x)}+ \varepsilon(v)\sum_{y \in Y} \beta_y e_{g(y)},$$
and consequently
$$v= \varepsilon(v)v + \varepsilon(w) \sum_{x \in X} \alpha_x e_{f(x)} \quad \textrm{and} \quad w= \varepsilon(w)w+ \varepsilon(v)\sum_{y \in Y} \beta_y e_{g(y)}.$$
Comparing coefficients give
\begin{equation}\label{eq union 1}
\alpha_x= \varepsilon(v) \alpha_x+ \varepsilon(w) \alpha_{f^{-1}(x)}
\end{equation}
and
\begin{equation}\label{eq union 2}
\beta_y = \varepsilon(w) \beta_y + \varepsilon(v) \beta_{g^{-1}(y)}
\end{equation}
for all $x\in X$ and $y \in Y$. Adding \eqref{eq union 1} for all $x \in X$ gives $\varepsilon(v)= \varepsilon(v) \varepsilon(u)$. Similarly, adding 
\eqref{eq union 2} for all $y \in Y$ gives $\varepsilon(w)= \varepsilon(w) \varepsilon(u)$. If $\varepsilon(u)=0$, then $\varepsilon(v)=\varepsilon(w)=0$, and hence $u=0$, a contradiction. So, we can assume that $\varepsilon(u)=1$, and hence at least one of $\varepsilon(v)$ or $\varepsilon(w)$ is non-zero. If $\varepsilon(v) \neq 0$, then \eqref{eq union 2} gives $\beta_y = \beta_{g^{-1}(y)}$ for all $y \in Y$. Since $g$ acts transitively on $Y$, it follows that $\beta_y= \beta$ (say) for all $y \in Y$. If $\beta=0$, then $w=0$. In this case, $u=\sum_{x \in X} \alpha_x e_x$, where $\sum_{x \in X} \alpha_x =1$, and hence $u \in  \mathcal{I}\big(\mathbf{k}[X]\big)$. If $\beta \neq 0$, then $\varepsilon(w) = m\beta \neq 0$, and \eqref{eq union 1} gives $\alpha_x= \alpha_{f^{-1}(x)}$ for all $x \in X$. Since $f$ also acts transitively on $X$, it follows that $\alpha_x= \alpha$ (say) for all $x \in X$. Thus, we have $$u= \alpha \big(\sum_{x \in X} e_x \big) + \beta \big(\sum_{y \in Y} e_y \big),$$ where $n \alpha +m \beta=1$. Similarly, if $\varepsilon(w) \neq 0$ and $\alpha = 0$, then we get $v=0$. In this case, $u=\sum_{y \in Y} \beta_y e_y$, where $\sum_{y \in Y} \beta_y =1$, and hence $u \in \mathcal{I}\big(\mathbf{k}[Y]\big)$. This completes the proof. 
\end{proof}
\medskip

\section{Concluding remarks}
We conclude with some remarks and open problems motivated by the results in the preceding sections.
\begin{enumerate}
\item All the non-zero idempotents of integral quandle rings computed in the preceding sections have augmentation value one. We believe that non-zero idempotents of integral quandle rings always have augmentation value one. This, however, fails for rings associated with idempotent quasigroups. In fact, Remark \ref{quasi group example} shows that  the ring associated with an idempotent quasigroup can have non-trivial idempotents with augmentation value zero. Furthermore, idempotents of quandle rings over $\mathbb{Z}_2$ can have augmentation value zero, for instance, this happens for all commutative quandles.
\item Let $Q(L)$ be the link quandle of a link $L$ in $\mathbb{R}^3$ and $X$ any quandle. It is well-known that the set $\Hom\big(Q(L), X\big)$ of all quandle homomorphisms extends the classical Fox colouring invariant of links. A link invariant which determines the quandle coloring invariant is called an {\it enhancement} of the quandle coloring invariant. Further, an enhancement is {\it proper} if there are examples in which the enhancement distinguishes links which have the same quandle coloring invariant. For instance, the quandle cocycle invariant is a proper enhancement arising from quandle cohomology. Since each quandle homomorphism $f:Q(L) \to X$ induces a homomorphism $\hat{f}:\mathbf{k}[Q(L)] \to \mathbf{k}[X]$ of quandle rings, it turns out that $\Hom_{\textrm{ring}}\big(\mathbf{k}[Q(L)], \mathbf{k}[X]\big)$ is an enhancement of  $\Hom\big(Q(L), X\big)$. See \cite{ENS2022} for related recent results. It is worth exploring whether this enhancement has a cohomolgical interpretation.
\item If a quandle has a subquandle of order two, then Proposition \ref{trivial subquandle idempotent} shows that  its quandle ring has non-trivial idempotents. A look at the table of quandles of order upto 35 seems to suggest that every faithful and non-latin quandle has a subquandle of order two.
\item Proposition \ref{Hopf link idempotent} shows that the quandle ring of the link quandle of the Hopf link admit non-trivial idempotents. Similarly, Corollary \ref{idempotents long knots} proves that the quandle ring of the knot quandle of the long knot has non-trivial idempotents. It is interesting to determine idempotents of quandle rings associated to other knots and links.
\item Quandle rings that have only trivial idempotents, quandle rings discussed in \cite{BPS1} and quandle rings covered by Theorem \ref{quandle covering theorem} have the property that the right multiplication by each idempotent is an automorphism. Proposition \ref{medial endomorphisms} proves that the right multiplication by an idempotent is always a ring endomorphism for medial quandles. Remark \ref{failure injectivity right mult} shows that the right multiplication by an idempotent need not be injective over the field of rationals. Further, Remark \ref{failure of distributivity} shows that idempotents fail to satisfy right-distributivity in general. In view of these observations, it would be interesting to classify quandles for which the set of all idempotents of their quandle rings over appropriate coefficients form a quandle with respect to the ring multiplication.
\item Our proof of Theorem \ref{idempotents in free products} crucially uses the fact that $FQ_2$ is the free product of one element quandles. We believe that the result holds for arbitrary free products of quandles whose quandle rings have only trivial idempotents.
\end{enumerate}
\medskip

\textbf{Acknowledgements.} 
M.E. was partially supported by the Simons Foundation Collaboration Grant 712462. The work was carried out when M.S. was visiting the University of South Florida. He thanks the USIEF for the Fulbright-Nehru Academic and Professional Excellence Fellowship that funded the visit and the University of South Florida for the warm hospitality during his stay. M.S. also acknowledges support from the SwarnaJayanti Fellowship grant.


\end{document}